\def\A{{\mathcal A}}
\def\o{\omega}
\def\O{\Omega}
\def\th{\theta}
\def\Th{\Theta}
\def\T{\mathbb{T}}
\def\E{{\mathscr E}}
\def\F{\mathscr F}
\def\H{\mathcal H}
\def\K{\mathcal K}
\def\R{\mathbb{R}}
\def\X{\mathbf X}
\def\Y{\mathbf Y}
\def\S{\mathcal S}
\def\e{{\sf e}}
\def\m{{\sf m}}
\def\r{{\rm r}}
\def\d{{\rm d}}
\def\({\left(}
\def\[{\left[}
\def\){\right)}
\def\]{\right]}
\def\si{\sigma}
\def\Si{\Sigma}
\def\G{{\sf G}}
\def\p{\parallel}
\def\<{\langle}
\def\>{\rangle}
 \newtheorem{thm}{Theorem}[section]
 \newtheorem{cor}[thm]{Corollary}
 \newtheorem{lem}[thm]{Lemma}
 \newtheorem{prop}[thm]{Proposition}
 \theoremstyle{definition}
 \newtheorem{defn}[thm]{Definition}
 \theoremstyle{remark}
 \newtheorem{rem}[thm]{Remark}
 \newtheorem{ex}[thm]{Example}
 \numberwithin{equation}{section}
\newtheorem{Setting}[thm]{Setting}
\numberwithin{equation}{section}
\begin{document}


\title{$C^*$-Algebraic Spectral  Sets,\\ Twisted Groupoids and Operators}

\author{Marius M\u antoiu}

\author{Marius M\u antoiu \footnote{
\textbf{2010 Mathematics Subject Classification:} Primary 22A22, 46L60, Secondary 37B05, 35S05.
\newline
\textbf{Key Words:} spectrum, groupoid, $C^*$-algebra, numerical range, pseudodifferential operator, cocycle, decomposition principle.  
\newline
{
}}
}



\maketitle


\begin{abstract}
We treat spectral problems by twisted groupoid methods. To Hausdorff locally compact groupoids endowed
 with a continuous $2$-cocycle one associates the reduced twisted groupoid $C^*$-algebra. Elements
 (or multipliers) of this algebra admit natural Hilbert space representations. We show the relevance 
of the orbit closure structure of the unit space of the groupoid in dealing with spectra, norms, numerical 
ranges and $\epsilon$-pseudospectra of the resulting operators. As an example, we treat a class of 
pseudo-differential operators introduced recently, associated to group actions. We also prove a 
Decomposition Principle for bounded operators connected to groupoids, showing that several relevant 
spectral quantities of these operators coincide with those of certain non-invariant restrictions. 
This is applied to Toeplitz-like operators with variable coefficients and to band dominated operators on discrete metric spaces.
\end{abstract}


\section*{Introduction}\label{introduction}

Many applications of $C^*$-algebras to spectral analysis rely on a very simple principle: a morphism between two $C^*$-algebras is contractive and reduces the spectrum of elements. In addition, if it is injective, it is isometric and preserves spectra. 

\smallskip
While working on this project, we became aware of the fact that such a principle works effectively  for much more than spectra (and essential spectra). We call {\it $C^*$-algebraic  spectral set} a function $\Si$ sending, for any unital $C^*$-algebra $\E$, the elements $E$ of this one to closed (usually compact)  subsets $\Si(E\!\mid\!\E)$ of $\mathbb C$\,, having the above mentioned behavior under $C^*$-morphisms. Definition \ref{ierarhie} states this formally. It turns out that there are many examples, including spectra, various types of $\epsilon$-pseudospectra and various types of numerical ranges. At all levels, it is relevant to factor out closed two-sided ideals. Especially when these ideals are elementary (isomorphic with the ideal of all the compact operators in a Hilbert space), one gets interesting "essential" $C^*$-spectral sets, that may have independent descriptions. The essential spectrum is the reference. 

\smallskip
In this rather abstract setting, we study properties of natural families of $C^*$-algebraic elements and especially  of bounded operators coming out of  Hilbert space representations.
In concrete situations, one has to arrange things to put this principle to work in a relevant way, with a geometrical background. Having a groupoid behind the stage makes things easier. 

\smallskip
The framework of groupoid algebras is a very general yet concrete and versatile setting to study spectral problems. Groupoids have long been used to provide examples of
abstract $C^*$-algebras against which the general theory was tested. They have also been applied to more concrete problems
arising from physics and spectral analysis. But it is not our purpose to sketch here a history of these topics.
 
 \smallskip
In a large part of this paper,  a $2$-cocycle is added to the structure, although a large part of the results are new even without this ingredient. In the classical reference \cite{Re}, twisted $C^*$-algebras (full and reduced) are associated to the couple $(\Xi,\o)$\,, where $\Xi$ is a locally compact groupoid with Haar system and $\o$ is a continuous $2$-cocycle (a cohomological object). Most of the subsequent works (but by no means all) neglected $\o$\,, taking it to be trivial. For the role of {\it twisted} groupoid algebras in classification theory we refer to \cite{Li}. Concerning spectral applications, especially, it seems that the twist did not raise much interest. We cite \cite{BBdN} for an exception; in that article a certain class of groupoids with a cocycle are involved in spectral analysis, especially concerning the continuity of spectra of families of operators. In \cite{MN1} the framework is quite close to the present one; however the results point in other directions. In particular, although the behavior under $C^*$-morphisms plays an important role in \cite{MN1}, it is not enough to get the results. So only essential spectra and essential numerical ranges are treated in \cite{MN1}, with related but different purposes. 

\smallskip
The present article focuses on generalized forms of spectra of various elements and operators constructed from elements or multipliers of the twisted groupoid $C^*$-algebras by localizing in a natural way, using the elements $x\in X$ of its unit space. They are precisely what appears to be interesting in practical situations and enjoy a useful and meaningful covariance property. This one immediately shows that what really matters (within unitary equivalence and equi-spectrality) is not the unit $x$, but its orbit $\mathcal O_x$ with respect to the obvious groupoid equivalence relation. But a much finer result follows from considering {\it orbit closures}.

\smallskip
In section \ref{goam} we review basic facts about groupoids endowed with a $2$-cocycle, following mainly \cite{Re}. This includes defining the regular representations and the reduced twisted groupoid $C^*$-algebra. As stated above, injectivity of morphisms plays an important role. In Proposition \ref{construint} we show that the regular representation corresponding to a unit belonging to a dense orbit is faithful. It is a key property allowing one to transfer spectral information from abstract elements to Hilbert space operators and play an important role in most of the proofs. We only found the untwisted case in the literature \cite{FS,KS,BB}, but existing proofs were easy to adapt to the presence of the cocycle. The groupoid is not supposed to be \'etale or second countable, which would have limited the range of applications. 

\smallskip
Section \ref{meni} is dedicated to our first abstract spectral results. We introduce a partial order relation on the family of orbit closures of the unit space $X$ of the groupoid $\Xi$\,, or more generally on the family of closed invariant subsets. Each such subset defines reductions at the level of the twisted groupoids and at the level of the $C^*$-algebras. We show $\Si$-spectral monotony with respect to these reductions. The Hilbert space operators $\big\{H_x\!\mid\!x\in X\big\}$ obtained by applying regular representations still respect this monotony. In particular, it follows that if two units generate the same orbit closure (which is usually much more general than belonging to the same orbit), then the corresponding operators are $\Si$-isospectral. In particular, they have the same spectra, the same $\epsilon$-pseudospectra, the same numerical ranges and the same polynomial hulls. With the exception of the spectrum these extra spectral quantities, briefly reviewed in Section \ref{menghina}, are rather trivial for normal elements. If the elements are not normal, they are extremely relevant, especially in applications, and subject to a lot of classical or recent research. We only cite \cite{BO,BD,Da,GR,HRS,SW}.

\smallskip
In particular, for minimal orbit closures $\mathcal Q$\,, the results become very simple: they assert that the mentioned $\Si$-spectral sets, computed for the corresponding family $\{H_x\!\mid\!x\in \mathcal Q\}$ of operators, are constant. To our knowledge this is new for the numerical range  and for the polynomial hulls, even if $\o=1$\,. For pseudospectra, our result should be compared with Corollary 4.9 from \cite{BLLS1}. If we fix a Hilbertian setting, our result is more general, and not only because the twist is present. However, in \cite{BLLS1} Banach space operators are treated and groupoid techniques would hardly be adapted to such a situation. 

\smallskip
Constancy of the spectrum of covariant families of operators labeled by the points of a minimal system has a long history. Being closer to our point of view, we only mention papers as \cite{Be,BLLS1,BLLS2,BHZ,BIST,Le,LS1}; they contain other relevant references. Most of the previous results of this type strictly refer to the minimal case, which is only a particular situation here. Considering a fixed orbit closure $\mathcal Q$\,, we put into evidence a decomposition $\mathcal Q=\mathcal Q^{\rm g}\sqcup\mathcal Q^{\rm n}$, where $\mathcal Q^{\rm g}$ is a dense subset of "generic points", on which the spectral quantities are always constant. In other situations, this has been used in \cite{Ma,Ma2}. This is connected with the notion of "pseudo-ergodic elements" used in \cite{BLLS1,BLLS2} for similar purposes.
We also avoided using spectral continuity in proving the result. Our groupoids are quite general (they are not associated to actions of discrete groups) and, in addition, they are twisted. On the other hand, in \cite{BLLS1,BLLS2} there are results for families of operators in Banach spaces that we cannot treat by groupoid methods, a feature we already mentioned above. It would be nice to find unifying methods leading to optimal results and not excluding Banach space operators. 

 \smallskip
 Our interest in the subject originated in the fact that the magnetic pseudo-differential calculus in $\R^n$ (cf.\,\cite{IMP,MP,MPR2} and references therein) may be reformulated in a twisted groupoid framework. In this case one has a transformation groupoid associated to some $\R^n$-action, the cocycle corresponding to a variable magnetic field; it is built from fluxes through triangles of the magnetic field, seen as a differentiable $2$-form on $\R^n$. The resulting situation is, however, very particular. Much more general pseudo-differential formalisms on non-commutative groups have been constructed recently \cite{RT,MR} and a version with cocycles already exists \cite{BM} (still rather undeveloped technically). In Section \ref{goforit} we use it as an exemplification of our abstract results. The untwisted case is treated in \cite{Ma2}.

\smallskip
{\it The second half of the article is dedicated to the Decomposition Principle.} We borrowed the terminology from \cite{LS}.
 Basically, a Decomposition Principle states that the essential spectrum of a suitable Hamiltonian $H$ acting in an $L^2(X)$-space coincides with the essential spectrum of its "restriction" $H_Y$ to  the complement $Y$ of a relatively compact subset $K\subset X$. Besides its intrinsic interest, this is also an essential step towards proving a Persson type formula. In \cite{LS}  such a Decomposition Principle has been obtained for operators associated to regular Dirichlet forms, under an extra (spatial) relative compactness assumption. Analytical and probabilistic methods are used and the range of applications is impressive (Laplace-Beltrami operators on manifolds, Laplace operators on graphs, $\alpha$-stable processes, Schr\"odinger operators). The introduction of \cite{LS} outlines the history of the subject in a much better way than we could do it here.

\smallskip
Our approach is based on groupoid $C^*$-algebras. In this second half of the paper cocycles may also be included but, for simplicity, we only considered the untwisted case; they can be supplied quite easily by the interested reader; see also Remark \ref{aculici}.
For the time being the approach is only confined to bounded operators, but there are classes of examples not covered by \cite{LS} (or by other results in the literature). The range of applications, more modest and almost disjoint from those mentioned above, could be drastically improved by an extra analytical effort, to which we hope to dedicate a future article. Transferring results from bounded to unbounded operators seems possible, by using a natural concept of affiliation. What is really a hard problem to solve is to give strong concrete criteria for the resolvent of  interesting unbounded operators to belong to groupoid $C^*$-algebras. And this should be compatible with the restriction process from $X$ to $Y$. A great achievement  would be the inclusion of twisted pseudo-differential operators on Lie groupoids in the sense of H\"ormander. Without cocycles this is already a well-developed topic (\cite{ALN,DS,NWX,LMN,Va} and many others). The twisted case will need some more technical efforts, especially at the level of affiliation and restrictions.

\smallskip
Anyhow, our results still refer to any $C^*$-algebraic essential spectral set, and not only to the essential spectrum; the main abstract result is Theorem \ref{aguacate}. A Fredholmness result is also included. The proofs need to perform restrictions to {\it non-invariant} sets of units. At the groupoid $C^*$-algebra level, this consists of involutive linear contractions {\it that are not $C^*$-morphisms}, and this is the main topic of section \ref{grafitoni}. In section \ref{grafomoni} we get an intrinsic Decomposition Principle, that refers to essential $C^*$-algebraic spectral sets of elements of groupoid algebras modulo suitable ideals. In section \ref{gramofoni} we assume that our groupoid is {\it standard}; the main new feature is the existence of an open dense orbit in the unit space, with trivial isotropy group. This ensures the existence of a canonical faithful Hilbert space representation, in which the relevant ideal in the groupoid algebra becomes  the ideal of all the compact operators.
In this representation the Decomposition Principle becomes concrete and, when $\Si={\sf sp}$ is the usual spectrum, one recovers the equality of essential spectra of the initial and the restricted operators. But other cases are also interesting.

\smallskip
Section \ref{gerfomoni} presents a class of examples, containing certain Toeplitz operators (even with "variable coefficients") on locally compact non-compact groups as particular cases. Their essential spectra coincide with the essential spectra of the corresponding convolution operators, that are simpler. For Abelian groups this becomes quite explicit, since the spectrum of a convolution operator in such a case is purely essential and, by Fourier unitary equivalence, it is just the spectrum of an operator of multiplication by a function defined on the Pontryagin dual of the group.

\smallskip
In the final section we show a decomposition principle in the setting of Roe's uniform $C^*$-algebra associated to a uniformly discrete metric space with bounded geometry satisfying Yu's Property A. The operators involved are band dominated operators and the restriction procedure consists in removing a finite number of raws and columns in the matrix-realization. The works applying the limit operator techniques in such a setting are related to a certain extent to our results. We cite recent articles on this topic \cite{AZ,HS,SWi}; see also references therein. The end of the section is dedicated to an extension of the Decomposition Principle to larger classes of operators, permitted by the discreteness of the metric space; we develop a suggestion of a referee, to whom we are grateful.

\smallskip
We end this Introduction with a couple of technical remarks.

\smallskip
Employing $C^*$-algebraic techniques allowed us to treat naturally elements and operators that are not normal. This is mainly due to the fact that using the functional calculus has been avoided.  This is important, since some of our $C^*$-algebraic spectral sets are particularly interesting for non-normal operators. In Proposition \ref{strofok} only, normality was required for one of the implications, since one needs equality between the norm and the spectral radius. On the other hand, it is difficult to imagine how to treat operators in Banach spaces by a similar approach. 

\smallskip
Most of the time, our groupoids are supposed to be Hausdorff, locally compact, $\si$-compact and to possess a Haar system. Some constructions or results would not need all these conditions, but this would require extra effort that is not justified having in view the examples we treat. In particular, the important Proposition \ref{construint} seems to rely on $\si$-compactness, although we are not aware of counterexamples; see also \cite{KS}. Starting from Section \ref{grafitoni} we also require the groupoid to be (topologically) amenable. For this concept we refer to \cite{ADR,Re1}. Amenability is a standard way to insure that the full and the reduced groupoid algebras coincide; this is shown in \cite{Re1} for $\si$-compact locally compact groupoids admitting a Haar system. Gaining in generality, this property (called the weak containment property) could be required in an implicit way. Unfortunately, it is not easy to see if it is preserved under various constructions, which makes necessary imposing it repeatedly. Starting from Section \ref{gramofoni} it is also imposed that the unit space $\,X$ has a dense open orbit $X_0$ and the isotropy of this orbit is trivial.  This implies the existence of a certain important faithful representation, in which the reduction of the groupoid to $X_0$ can be identified with the ideal of all compact operators.

\smallskip
We did not ask our groupoids to be second countable. The main reason is Section \ref{germanofoni}, where the Stone-\u Cech compactification of a (suitable) discrete metric space appears; it is $\si$-countable without being second countable.

\section{Spectral sets in a $C^*$-algebraic setup}\label{menghina}

\begin{defn}\label{ierarhie}
To give a {\it $C^*$-algebraic spectral set} means to indicate, for every unital complex $C^*$-algebra $\mathscr E$, a function 
$$
\Si(\cdot\!\mid\!\E):\mathscr E\to{\sf Cl}(\mathbb C):=\{\Lambda\subset\mathbb C\!\mid\!\Lambda\ {\rm closed}\}
$$
such that if $\pi:\mathscr E\to\mathscr F$ is a unital $C^*$-morphism then 
\begin{equation}\label{pulgar}
\Si\big(\pi(E)\!\mid \!\mathscr F\big)\subset\Si(E\!\mid\!\E)\,,\quad\forall\,E\in\mathscr E
\end{equation}
and such that in \eqref{pulgar} one has equality if $\pi$ is injective.
\end{defn}

It is convenient to call $\Si(E\!\mid\!\E)$ {\it the $\Si$-spectrum of $E\in\mathscr E$}. In particular, if $\mathscr E$ is a $C^*$-subalgebra of $\mathscr F$, for every $E\in\mathscr E$ the equality $\Si(E\!\mid\!\mathscr F)=\Si(E\!\mid\!\mathscr E)$ will hold. The importance of the faithful $^*$-representations is clear in this context.

\begin{ex}\label{celebazice}
{\rm The basic example of a $C^*$-algebraic spectral set is {\it the spectrum} $\Si(\cdot\!\mid\!\E):={\sf sp}(\cdot\mid\E)$\,.} The fact that it is indeed a $C^*$-algebraic spectral set follows immediately from the definition of the complement of the spectrum in terms of invertibility.
\end{ex}

\begin{ex}\label{celebazuce}
{\rm If $\mathscr E$ is a unital $C^*$-algebra , then $\mathcal S(\mathscr E)$ denotes its {\em state space}, that is, the set of positive linear forms
$\phi:\mathscr E\to\mathbb C$ such that $\phi(1_\mathscr E)=1$\,. The (algebraic) {\em numerical range} \cite{BD,BD2,HRS,SW} of $E\in\mathscr E$ is
\begin{equation*}
{\sf nr}(E\!\mid\!\E):=\{\phi(E)\!\mid\!\phi\in\S(\mathscr E)\}\,.
\end{equation*}
It is a compact, convex subset of $\mathbb C$ containing the convex hull of the spectrum:
\begin{equation*}
{\rm co}[{\sf sp}(E\!\mid\!\E)] \subset {\sf nr}(E\!\mid\!\E) \subset \big\{\lambda \in \mathbb C\mid |\lambda| \le\, \|E\|_\mathscr E\!\big\}\,.
\end{equation*} 
For normal elements $E\in\mathscr E$ the first inclusion is an equality, but in general the inclusions are strict. If $H\in\mathscr E=\mathbb B(\H)$ (the bounded linear operators in some Hilbert space $\H$)\,, then ${\sf nr}\big(H\!\mid\!\mathbb B(\H)\big)$ is the closure of {\it the operator (spatial) numerical range} \cite{GR}
\begin{equation*}
{\sf nr}_0(H):=\big\{\<Hu,u\>_\H\,\big\vert\,u\in\H\,,\|u\|_\H\,=\!1\big\}\,.
\end{equation*}
}
\end{ex}

\begin{lem}\label{sontorc}
The numerical range is a $C^*$-algebraic spectral set.
\end{lem}

\begin{proof}
We check that {\it the numerical range shrinks under unital $C^*$-morphisms and is left unchanged if the $C^*$-morphism is injective}. 
If $\mu:\mathscr E\to\mathscr F$ is a unital $C^*$-morphism, the transpose 
$$
\mu^*:\mathscr F^*\to\mathscr E^*,\quad\mu^*(\varphi):=\varphi\circ\mu
$$
restricts to a map $\mu^*\!:\S(\mathscr F)\to\S(\mathscr E)$\,. Then, for each $E\in\mathscr E$\,:
$$
\begin{aligned}
{\sf nr}\big(\mu(E)\!\mid\!\mathscr F\big)&=\{\varphi[\mu(E)]\!\mid\!\varphi\in\S(\mathscr F)\}=\{[\mu^*(\varphi)](E)]\!\mid\!\varphi\in\S(\mathscr F)\}\\
&\subset\{\phi(E)\!\mid\phi\in\S(\mathscr E)\}={\sf nr}(E\!\mid\!\mathscr E)\,.
\end{aligned}
$$
If $\mu$ is injective, then $\mu^*$ is surjective (by the Hahn-Banach Theorem) and we get equality.
\end{proof}

There are other proofs, based on various characterizations of the numerical range.

\smallskip
Let $\mathscr E$ be a unital $C^*$-algebra and $E_0,E_1,\dots,E_n\in\mathscr E$. For each $\lambda\in\mathbb C$ one defines \cite[Sect.\,3.3]{HRS} {\it the operator polynomial} 
\begin{equation}\label{opol}
\mathbf E(\lambda)\!:=E_0+\lambda E_1+\dots+\lambda^n E_n\in\mathscr E.
\end{equation}

\begin{defn}\label{lumeanoua}
\begin{enumerate}
\item[(i)]
{\it The $0$-spectrum of} $\,\mathbf E$ is defined as $\bf{sp}_0(\mathbf E\!\mid\!\E)\!:=\{\lambda\in\mathbb C\!\mid\!\mathbf E(\lambda)\ {\rm is\ not\ invertible}\}$\,.
\item[(ii)]
For every $\epsilon>0$ {\it the $\epsilon$-pseudospectrum of} $\,\mathbf E$ is defined as 
\begin{equation}\label{pseudospe}
{\bf psp}_\epsilon(\mathbf E\!\mid\E):=\big\{\lambda\in\mathbb C\mid\,\p\!\mathbf E(\lambda)^{-1}\!\p_\mathscr E\,\ge 1/\epsilon\big\}\,.
\end{equation} 
\end{enumerate}
\end{defn}

By convention, one sets $\p\!\mathbf E(\lambda)^{-1}\!\p_\E\,=\infty$ if $\,\mathbf E(\lambda)$ is not invertible, so ${\bf sp}_0(\mathbf E\!\mid\!\E)\subset\bigcap_{\epsilon>0}{\bf psp}_\epsilon(\mathbf E\!\mid\!\E)$\,, very often strictly.

\smallskip
We speak of {\it the standard (or the linear) case} when $n=1$\,, $E_0\equiv E\in\mathscr E$ and $E_1\!:=-1_\mathscr E$\,. Then $\mathbf E(\lambda)=E-\lambda 1_\mathscr E$ and we recover the usual notions of {\it spectrum} and {\it $\epsilon$-pseudospectrum} of the element $E\in\E$. If $E$ is normal, the $\epsilon$-pseudospectrum is just an $\epsilon$-neighborhood of the spectrum and the situation is simple; we are mainly interested in the non-normal case. We refer to \cite[Sect.\,9]{Da} for a detailed study, many examples and applications of pseudospectra (in a Hilbert space setup), as well as for generalizations to which our approach also works. It is argued that for non-normal operators the concept is more useful and easier to control than the usual spectrum.

\begin{lem}\label{sontork}
The $\epsilon$-pseudospectra are $C^*$-algebraic spectral sets.
\end{lem}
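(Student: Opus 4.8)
The plan is to reduce the statement to two elementary properties of unital $C^*$-morphisms, together with the convention on $\p\,\cdot\,\p_\E$ attached to non-invertible elements. Recall that any unital $C^*$-morphism $\pi:\E\to\F$ is contractive and sends invertible elements to invertible elements, with $\pi(a)^{-1}=\pi(a^{-1})$; if in addition $\pi$ is injective it is isometric and, by spectral permanence for the $C^*$-subalgebra $\pi(\E)\subset\F$, it also reflects invertibility, so that $a$ is invertible in $\E$ if and only if $\pi(a)$ is invertible in $\F$. Since $\pi$ is linear and unital, it commutes with the evaluation of operator polynomials: for every $\lambda$ the element $\pi\big(\mathbf E(\lambda)\big)=\pi(E_0)+\lambda\pi(E_1)+\dots+\lambda^n\pi(E_n)$ is precisely the value at $\lambda$ of the operator polynomial built from the pushed-forward coefficients $\pi(E_0),\dots,\pi(E_n)$. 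I would begin by recording these facts; the linear case $n=1$ then yields the corresponding statement for the ordinary $\epsilon$-pseudospectrum of a single element, so the definition of $C^*$-algebraic spectral set applies as intended.

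For the inclusion \eqref{pulgar}, I fix $\lambda$ in the $\epsilon$-pseudospectrum of the pushed-forward polynomial and distinguish two cases according to the invertibility of $\mathbf E(\lambda)$ in $\E$. If $\mathbf E(\lambda)$ is not invertible, then $\p\mathbf E(\lambda)^{-1}\p_\E=\infty\ge 1/\epsilon$ by convention, so $\lambda$ belongs to ${\bf psp}_\epsilon(\mathbf E\!\mid\!\E)$ with nothing to prove. If $\mathbf E(\lambda)$ is invertible, then so is its image, and contractivity gives $\p\pi(\mathbf E(\lambda))^{-1}\p_\F=\p\pi\big(\mathbf E(\lambda)^{-1}\big)\p_\F\le\p\mathbf E(\lambda)^{-1}\p_\E$; the defining inequality $\p\pi(\mathbf E(\lambda))^{-1}\p_\F\ge1/\epsilon$ then forces $\p\mathbf E(\lambda)^{-1}\p_\E\ge1/\epsilon$. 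In both cases $\lambda\in{\bf psp}_\epsilon(\mathbf E\!\mid\!\E)$, which is the required inclusion.

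For the equality when $\pi$ is injective I run the reverse inclusion by the same case split, now using isometry in place of contractivity. If $\mathbf E(\lambda)$ is invertible, the inequality above becomes an equality, so the two inverse-norms agree. If $\mathbf E(\lambda)$ is not invertible, reflection of invertibility guarantees that $\pi(\mathbf E(\lambda))$ is not invertible either, whence both norms equal $\infty$. In either case the membership conditions defining the two pseudospectra coincide, and equality follows.

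The only genuinely delicate point --- and the one I would be careful to invoke explicitly --- is the reflection of non-invertibility in the injective case: the forward inclusion uses only that morphisms preserve invertibility, whereas the equality needs that an injective morphism cannot turn a non-invertible element into an invertible one. This is exactly spectral permanence, and it is what rules out a spurious point of ${\bf psp}_\epsilon(\mathbf E\!\mid\!\E)$, coming from the $\infty$ convention, that would fail to appear in the pseudospectrum of the image. Beyond this, the argument is pure bookkeeping around the $\infty$ convention and carries no analytic content.
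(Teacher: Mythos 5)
Your proof is correct and follows essentially the same route as the paper's (much terser) argument: the definition \eqref{pseudospe} with the $\infty$ convention, contractivity of unital $C^*$-morphisms, isometry in the injective case, and inverse-closedness (what the paper calls stability of $C^*$-algebras under inversion, i.e.\ spectral permanence) to rule out the spurious points coming from non-invertibility. Your write-up simply makes explicit the case analysis and the role of spectral permanence that the paper leaves implicit.
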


\begin{proof}
The assertion about the behavior of  the $\epsilon$-pseudospectrum under unital $C^*$-morphisms follows from the definition \ref{pseudospe}, from the fact that $C^*$-algebras are stable under inversion, $C^*$-morphisms decrease norms and are isometric if they are injective. 
\end{proof}

\smallskip
Let us mention briefly some other concepts. Much more about these notions and their applications may be found in \cite[Sect.\,9.4]{Da} and in the references therein. Let $H$ be a bounded operator in the Hilbert space $\H$ and $p:\mathbb C\to \mathbb C$ a polynomial. One sets
\begin{equation}\label{celeuna}
{\sf hull}(p,H)\!:=\{\lambda\in\mathbb C\mid|p(\lambda)|\le\,\p\!p(H)\!\p\}\,,
\end{equation}
\begin{equation}\label{celedoua}
{\sf num}(p,H):=\{\lambda\in\mathbb C\mid p(\lambda)\in{\sf nr}[p(H)]\}\,.
\end{equation}
It can be shown that
$$
{\sf sp}(H)\subset{\sf num}(p,H)\subset{\sf hull}(p,H)\subset{\sf nr}(H)\,,
$$
and all the inclusions may be strict. Actually, the intersection of ${\sf num}(p,H)$ and ${\sf hull}(p,H)$ (respectively) over all polynomials are equal and coincide with the polynomial convex hull of ${\sf sp}(H)$ \cite[Th.\,9.4.6]{Da}. It is easy to see that the two notions appearing in \eqref{celeuna} and \eqref{celedoua} also make sense for elements in $C^*$-algebras and define $C^*$-algebraic spectral sets.

\smallskip
Essential $C^*$-algebraic spectral sets (computed in quotients) will be mentioned when needed.

\section{Twisted groupoid $C^*$-algebras}\label{goam}

As general notations, we deal with groupoids $\Xi$ over a unit space $\Xi^{(0)}\!\equiv X$. We recall that a groupoid can be seen as a small category in which all the morphisms are invertible.  For the general theory of locally compact groupoids and their $C^*$-algebras we refer to \cite{Re,Wi}\,. We treat objects of the category as morphisms in the canonical way.
The source and range maps are denoted by ${\rm d,r}:\Xi\to \Xi^{(0)}$ and the family of composable pairs by $\,\Xi^{(2)}\!\subset\Xi\times\Xi$\,. For $A,B\subset X$ one uses the standard notations 
$$
\Xi_A:={\rm d}^{-1}(A)\,,\quad\Xi^B:={\rm r}^{-1}(B)\,,\quad\Xi_A^B:=\Xi_A\cap\Xi^B.
$$
Particular cases are the $\d$-fibre $\Xi_x\equiv\Xi_{\{x\}}$\,, the  ${\rm r}$-fibre $\Xi^x\equiv\Xi^{\{x\}}$ and the isotropy group  $\Xi_x^x\equiv\Xi_{\{x\}}^{\{x\}}$ of a unit $x\in X$. 

\smallskip
As it is trivial to check, in every groupoid an equivalence relation on $X$ is defined by $x\approx y$ if $x=\r(\xi)$ and $y={\rm d}(\xi)$ for some $\xi\in\Xi$\,. {\it The orbit of a point} $x$ will be denoted by $\mathcal O_x={\rm r}(\Xi_x)$\,. If there is just one orbit, we say that the groupoid is {\it transitive}. A subset $Y\subset X$ is called {\it invariant} if  for every $x\in Y$ one has $\mathcal O_x\subset Y$.

\smallskip
We are going to suppose that $\Xi$ is a Hausdorff locally compact $\si$-compact groupoid \cite{Re,Wi}. In particular, $\Xi$ and $X$ are Hausdorff and locally compact and all the structure maps are continuous. The unit space $X$, as well as all the fibers $\Xi^x,\Xi_x$ are closed in $\Xi$\,.

\smallskip
We recall the concept of {\it right Haar system}. It is a family of positive regular Borel measures $\big\{\lambda_x\,\big\vert\, x\in X\big\}$ labelled by the unit space of the groupoid such that
\begin{itemize}
\item
$\lambda_x$ is fully supported on $\Xi_x$\,,
\item
for every $f:\Xi\to\mathbb C$ continuous and compactly supported, the map $X\ni x\to\int_{\Xi_x}\!fd\lambda_x$ is continuous,
\item
for any $\xi\in\Xi$\,, the homeomorphism $\Xi_{\r(\xi)}\ni\eta\to\eta\xi\in\Xi_{\d(\xi)}$ transports $\lambda_{\r(x)}$ to $\lambda_{\d(x)}$\,.
\end{itemize}
Neither the existence nor the uniqueness of such an object is guaranteed in general. But when Haar systems do exist, it is known \cite{Re,Wi} that $\d,\r$ are open maps.

\smallskip
{\it All over the remaining part of the paper $\Xi$ will be a Hausdorff, locally compact $\si$-compact groupoid with fixed right Haar system $\lambda=\{\lambda_x\!\mid\!x\in X\}$\,.}
 Composing with the inversion $\Xi\ni \xi\to\iota(\xi)\equiv \xi^{-1}\in\Xi$\,, we get the left Haar system $\{\lambda^x\!\mid\!x\in X\}$ (obvious definition, analog with the one of a right Haar system, mutatis mutandis).

\begin{defn}\label{asteluza}
{\it A $2$-cocycle} \cite{Re} is a continuous function $\,\o:\Xi^{(2)}\to\T:=\{z\in\mathbb C\mid |z|=1\}$ satisfying
\begin{equation}\label{doicocic}
\o(\xi,\eta)\o(\xi\eta,\zeta)=\o(\eta,\zeta)\o(\xi,\eta\zeta)\,,\quad\ \forall\,(\xi,\eta)\in\Xi^{(2)},\ (\eta,\zeta)\in\Xi^{(2)},
\end{equation}
\begin{equation}\label{normaliz}
\o(x,\eta)=1=\o(\xi,x)\,,\quad\ \forall\,\xi,\eta\in\Xi\,,\,x\in X,\;\r(\eta)=x=\d(\xi)\,.
\end{equation}
Particular cases are the {\it $2$-coboundaries}, those of the form
\begin{equation*}\label{coboundary}
[\delta^1(\si)](\xi,\eta):=\si(\xi)\si(\eta)\si(\xi\eta)^{-1}
\end{equation*}
for some continuous map $\si:\Xi\to\T$\,. We write $\o\in Z^2(\Xi;\T)$ and $\delta^1(\si)\in B^2(\Xi;\T)$\,. The quotient $H^2(\Xi;\T):=Z^2(\Xi;\T)/B^2(\Xi;\T)$ is called {\it the second group of cohomology of the groupoid $\Xi$.}
\end{defn}

If $(\Xi,\lambda,\o)$ is given, $C_c(\Xi)\equiv C_c(\Xi,\o)$ becomes a $^*$-algebra with the composition law
\begin{equation}\label{mutilaw}
\begin{aligned}
(f\star_\o\!g)(\xi)&:=\int_{\Xi}\!f(\eta^{-1})g(\eta\xi)\,\o\big(\eta^{-1}\!,\eta\xi\big)\,d\lambda_{\r(\xi)}(\eta)\\
&=\int_{\Xi}\!f(\xi\eta^{-1})g(\eta)\,\o\big(\xi\eta^{-1}\!,\eta\big)\,d\lambda_{\d(\xi)}(\eta)
\end{aligned}
\end{equation}
(the equivalence of the two expressions in \eqref{mutilaw} is easy to establish) and the involution
\begin{equation}\label{ridic}
f^{\star_\o}(\xi):=\overline{\o(\xi,\xi^{-1})}\,\overline{f(\xi^{-1})}\,.
\end{equation}
Then, by the usual procedure \cite{Re}, one gets {\it the (full) twisted groupoid $C^*$-algebra} ${\sf C}^*(\Xi,\o)$\,. We do not describe its construction, because we are going to work with the reduced version, to be introduced below.

\begin{ex}\label{cuartet}
{\rm By \cite[Remark 2.2]{BaH}, on principal transitive groupoids (pair groupoids) all $2$-cocycles are trivial (coboundaries). The (twisted) groupoid algebra is elementary, i.e. isomorphic to the $C^*$-algebra of all the compact operators on a Hilbert space.}
\end{ex}

Recall that for every $x\in X$ there is a representation $\Pi_x$ defined by
$$
\Pi_x(f)u:=f\star_\o\!u\,,\quad\forall\,f\in C_{\rm c}(\Xi)\,,\ u\in L^2(\Xi_x;\lambda_x)=:\H_x\,.
$$
These {\it regular representations} are associated to (induced from) Dirac measures $\nu=\delta_x$ on the unit space. Among others, the regular representations serve to define the reduced norm
\begin{equation*}\label{redunorm}
\p\!\cdot\!\p:C_{\rm c}(\Xi)\to\R_+\,,\quad\p\!f\!\p\,:=\sup_{x\in X}\!\p\!\Pi_x(f)\!\p_{\mathbb B(\H_x)}.
\end{equation*}
The completion of $\big(C_{\rm c}(\Xi),\p\!\cdot\!\p\!\big)$ is denoted by ${\sf C}^*_{\rm r}(\Xi,\o)$\,, it is called {\it the reduced twisted $C^*$-algebra of the twisted groupoid $(\Xi,\lambda,\o)$} and generally it is a quotient of ${\sf C}^*(\Xi,\o)$\,. The isomorphy class of both the full and the reduced twisted groupoid $
C^*$-algebras only depend on the cohomology class of the cocycle (use $f\to\si f$ if $\,\o'=\delta^1(\si)\o$)\,.

\begin{rem}\label{jiek}
{\rm It follows easily that for every $\xi\in\Xi$ one has the unitary equivalence $\,\Pi_{\r(\xi)}\approx\Pi_{\d(\xi)}$\,, so the regular representations along an orbit are all unitarily equivalent: 
$x\approx y\,\Rightarrow\,\Pi_x\approx\Pi_y\,.$}
\end{rem}

The next result is well-known for {\it untwisted}  groupoid $C^*$-algebras (see \cite{FS,KS,BB} for instance). We are going to need it in our general case. See also \cite[Corollary 2.16]{MN1}.

\begin{prop}\label{construint}
Let $\o$ be a $2$-cocycle on the Hausdorff locally compact $\si$-compact groupoid $\Xi$ with Haar system. Assume that there is a dense orbit $M\subset X$. For every $x\in M$ the representation $\Pi_x$ is faithful. If $\,{\sf C}_{\rm r}^*(\Xi,\o)$ is not unital, the extension $\Pi_x^{\bf M}:{\sf C}_{\rm r}^*(\Xi,\o)^{\bf M}\to\mathbb B\big[L^2(\Xi_x;\lambda_x)\big]$ to the multiplier $C^*$-algebra is also faithful. 
\end{prop}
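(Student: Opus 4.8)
The plan is to deduce faithfulness of $\Pi_x$ from an isometry statement on the dense $^*$-subalgebra $C_{\rm c}(\Xi)$, and then to obtain the multiplier statement purely formally. It suffices to show that for every $f\in C_{\rm c}(\Xi)$
\begin{equation*}
\p\!\Pi_x(f)\!\p_{\mathbb B(\H_x)}=\sup_{y\in X}\p\!\Pi_y(f)\!\p_{\mathbb B(\H_y)}=\p\!f\!\p\,,
\end{equation*}
since an isometric $^*$-homomorphism on a dense subalgebra extends to an isometry, hence an injection, of $\,{\sf C}^*_{\rm r}(\Xi,\o)$. Put $g_f(y):=\p\!\Pi_y(f)\!\p_{\mathbb B(\H_y)}$. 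By Remark \ref{jiek}, $g_f$ is constant along the orbit $M$, with value $\p\!\Pi_x(f)\!\p$. Thus the first assertion follows once I prove that $g_f$ is \emph{lower semicontinuous}: for an arbitrary $y_0\in X$, density of $M$ supplies a net $(y_\alpha)\subset M$ with $y_\alpha\to y_0$, and lower semicontinuity gives $g_f(y_0)\le\liminf_\alpha g_f(y_\alpha)=\p\!\Pi_x(f)\!\p$; as $x\in M\subset X$, this forces $\sup_{y\in X}g_f(y)=g_f(x)=\p\!\Pi_x(f)\!\p$, i.e.\ the desired isometry.

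The heart of the argument is therefore the lower semicontinuity of $g_f$. For $\phi,\psi\in C_{\rm c}(\Xi)$ write $\phi_y:=\phi|_{\Xi_y}\in\H_y$ and set
\begin{equation*}
F_{\phi,\psi}(y):=\big\langle\Pi_y(f)\phi_y,\psi_y\big\rangle_{\H_y}\,,\qquad N_\phi(y):=\p\phi_y\p_{\H_y}^2=\int_{\Xi_y}\!|\phi|^2\,d\lambda_y\,.
\end{equation*}
Expanding the convolution \eqref{mutilaw} and the inner product, $F_{\phi,\psi}$ and $N_\phi$ become integrals against $\lambda_y$ (respectively $\lambda_y\otimes\lambda_y$) of continuous, compactly supported integrands; by the continuity axiom of the Haar system, together with continuity of $\o$ (which contributes only a continuous bounded factor of modulus $1$), both are continuous functions of $y$. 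Moreover the restrictions $\{\phi_y\mid\phi\in C_{\rm c}(\Xi)\}$ are dense in $\H_y=L^2(\Xi_y;\lambda_y)$: since $\Xi_y$ is closed and $\Xi$ is locally compact Hausdorff, every $h\in C_{\rm c}(\Xi_y)$ extends (Tietze on the compact Hausdorff one-point compactification of $\Xi$, cut off by a bump function) to $\tilde h\in C_{\rm c}(\Xi)$ with $\tilde h|_{\Xi_y}=h$, while $C_{\rm c}(\Xi_y)$ is dense in $L^2(\Xi_y;\lambda_y)$. Hence
\begin{equation*}
\p\!\Pi_y(f)\!\p=\sup\Big\{\,|F_{\phi,\psi}(y)|\big/\!\sqrt{N_\phi(y)\,N_\psi(y)}\ \Big|\ \phi,\psi\in C_{\rm c}(\Xi),\ N_\phi(y),N_\psi(y)>0\Big\}.
\end{equation*}
Fixing $y_0$ and $\epsilon>0$, choose $(\phi,\psi)$ with $N_\phi(y_0),N_\psi(y_0)>0$ whose ratio exceeds $g_f(y_0)-\epsilon$; this ratio is continuous near $y_0$ (the denominators stay positive there), hence remains above $g_f(y_0)-2\epsilon$ on a neighbourhood. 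Since $g_f$ dominates every such ratio, $g_f$ is lower semicontinuous at $y_0$, as required.

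Granting the isometry, I pass to the multiplier statement. The regular representation $\Pi_x$ is nondegenerate, because $\Pi_x\big(C_{\rm c}(\Xi)\big)\H_x$ is dense in $\H_x$; therefore it extends uniquely to a unital representation $\Pi_x^{\bf M}$ of the unital algebra $\,{\sf C}^*_{\rm r}(\Xi,\o)^{\bf M}$. If $\Pi_x^{\bf M}(T)=0$ for a multiplier $T$, then for every $a\in{\sf C}^*_{\rm r}(\Xi,\o)$ one has $\Pi_x(Ta)=\Pi_x^{\bf M}(T)\,\Pi_x(a)=0$; as $Ta\in{\sf C}^*_{\rm r}(\Xi,\o)$ and $\Pi_x$ is faithful there, $Ta=0$ for all $a$, whence $T=0$ because $\,{\sf C}^*_{\rm r}(\Xi,\o)$ is an essential ideal of its multiplier algebra. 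This yields faithfulness of $\Pi_x^{\bf M}$.

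The main obstacle is the lower semicontinuity step: one must simultaneously manage the family of varying Hilbert spaces $\H_y$, secure the density of the restriction vectors $\phi_y$, and cope with the fact that the denominators $N_\phi(y),N_\psi(y)$ may degenerate as $y$ moves, which is why the estimate has to be organized locally around each $y_0$. This is precisely where the standing hypotheses are invoked: $\si$-compactness (following \cite{KS}) provides the regularity underlying the extension and density arguments, while continuity of the cocycle $\o$ guarantees that replacing the untwisted convolution by $\star_\o$ disturbs none of the continuity statements.
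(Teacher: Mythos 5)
Your proof is correct, and its skeleton agrees with the paper's: constancy of $y\mapsto\,\p\!\Pi_y(f)\!\p$ along the dense orbit (Remark \ref{jiek}), plus the fact that the reduced norm is already attained as a supremum over a dense set of units, plus the essential-ideal argument for the multiplier extension. Where you genuinely diverge is in how that middle step is established. The paper imports the Hilbert $C_0(X)$-module machinery of Khoshkam--Skandalis: it builds the module $L^2(\Xi,\lambda)$, realizes ${\sf C}^*_{\rm r}(\Xi,\o)$ as adjointable operators given by a $^*$-strongly continuous field $\{\Pi_x(f)\}_x$, and cites the abstract Lemma 2.2 of \cite{KS} to conclude that $\p\!f\!\p=\sup_{x\in D}\p\!\Pi_x(f)\!\p$ for any dense $D\subset X$. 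You instead prove the underlying fact --- lower semicontinuity of $y\mapsto\,\p\!\Pi_y(f)\!\p$ --- by hand: you exhibit the fiber norm as a supremum of ratios $|F_{\phi,\psi}(y)|/\sqrt{N_\phi(y)N_\psi(y)}$ of functions continuous in $y$, using Tietze extension to see that restrictions of $C_{\rm c}(\Xi)$ functions are dense in each $\H_y$. This is self-contained and elementary (note, incidentally, that the continuity of $F_{\phi,\psi}$ reduces cleanly to the Haar-system axiom for a single integral, since $F_{\phi,\psi}(y)=\int_{\Xi_y}(f\star_\o\phi)\,\overline{\psi}\,d\lambda_y$ and $C_{\rm c}(\Xi)$ is closed under $\star_\o$, so you do not really need the $\lambda_y\otimes\lambda_y$ formulation); the paper's route is shorter by citation and has the side benefit that the module/continuous-field picture it sets up is reused to identify ${\sf C}^*_{\rm r}(\Xi,\o)$ concretely. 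Two small points you assert without proof --- nondegeneracy of $\Pi_x$ (needed for the canonical extension to the multiplier algebra) and closure of $C_{\rm c}(\Xi)$ under convolution --- are standard and harmless, but deserve at least a reference to \cite{Re}.
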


Note that ${\sf C}_{\rm r}^*(\Xi,\o)$ is unital if the groupoid is \'etale, with compact unit space.

\begin{proof}
The proof is based on adaptations to the twisted case of arguments from \cite{KS,BB}. 

\smallskip
One defines the $C_0(X)$-valued scalar product $\<\cdot\!\mid\!\cdot\>:C_{\rm c}(\Xi,\o)\times C_{\rm c}(\Xi,\o)\to C_0(X)$ by
\begin{equation*}\label{c0prod}
\<f\!\mid\!g\>(x)\!:=\big(f^{\star_\o}\!\star_\o\!g\big)|_X(x)=\int_{\Xi_x}\!\overline{f(\eta)}g(\eta)d\lambda_x(\eta)
\end{equation*}
(it is easy to check that the cocycle disappears from the formula). This and the right action 
\begin{equation*}\label{zocotealla}
C_{\rm c}(\Xi,\o)\times C(X)\ni(f,\psi)\to f\cdot\psi:=(\psi\circ \d)\,f\in C_{\rm c}(\Xi,\o)
\end{equation*}
 (also $\o$-independent) make $C_{\rm c}(\Xi,\o)$ a pre-Hilbert $C_0(X)$-module. Its completion is denoted, conventionally, by $L^2(\Xi,\lambda)$\,. The norm on $C_{\rm c}(\Xi,\o)$ is given by
\begin{equation*}\label{normuza}
\p\!g\!\p^2_{L^2}\,:=\,\p\!\<g\!\mid\!g\>\!\p_\infty=\sup_{x\in X}\int_{\Xi_x}\!\!|g(\eta)|^2d\lambda_x(\eta)=\sup_{x\in X}\!\p\!g|_{\Xi_x}\!\!\p_{\H_x}^2.
\end{equation*}
Using this, for $f,g\in C_{\rm c}(\Xi,\o)$ we get
$$
\begin{aligned}
\p\!f\!\star_\o\!g\!\p_{L^2}\,&=\,\sup_{x\in X}\!\p\!(f\!\star_\o\!g)|_{\Xi_x}\!\!\p_{\H_x}\,=\,\sup_{x\in X}\!\p\![\Pi_x(f)](g|_{\Xi_x})\!\p_{\H_x}\\
&\le\,\sup_{x\in X}\!\p\!\Pi_x(f)\!\p_{\mathbb B(\H_x)}\sup_{x\in X}\!\p\!g|_{\Xi_x}\!\!\p_{\H_x}\,,
\end{aligned}
$$
ending up in $\p\!f\!\star_\o\!g\!\p_{L^2}\,\le\,\p\!f\!\p\,\p\!g\!\p_{L^2}.$

\smallskip
A Hilbert $C_0(X)$-module is surely the space of continuous sections over a continuous field $\{\K_x\!\mid\!x\in X\}$ of Hilbert spaces, while the adjointable elements $T\in\mathbb B\big[L^2(\Xi,\lambda)\big]$ may be written as $^*$-strongly continuous fields of operators $\{T_x\in\mathbb B(\mathcal K_x)\!\mid\!x\in X\}$\,, the norm of $T$ coinciding with $\,\sup_{x}\!\p\!T_x\!\p_{\mathbb B(\K_x)}$\,. One shows exactly as in \cite[Th.\,2.3]{KS} that there is a representation $\Pi:{\sf C}^*(\Xi,\o)\to\mathbb B\big[L^2(\Xi,\lambda)\big]$ such that ${\sf C}^*_{\rm r}(\Xi,\o)$ identifies to $\Pi\big[{\sf C}^*(\Xi,\o)\big]\subset\mathbb B\big[L^2(\Xi,\lambda)\big]$\,. The Hilbert spaces $\K_x$ and $\H_x$ are unitarily equivalent; under this equivalence, $\Pi(f)_x$ becomes $\Pi_x(f)$ for every $x\in X$ and $f\in{\sf C}^*(\Xi,\o)$\,.

\smallskip
Using the abstract Lemma 2.2 from \cite{KS}, one shows immediately that the reduced norm can be recuperated from any dense subset $D\subset X$ of units:
\begin{equation}\label{redundnorm}
\p\!f\!\p\,=\sup_{x\in D}\!\p\!\Pi_x(f)\!\p_{\mathbb B(\H_x)}.
\end{equation}
Without cocycle, this is \cite[Corollary 2.4(a)]{KS}. Now one takes for $D$ the dense orbit $M$ and takes into consideration our Remark \ref{jiek}, asserting that along the orbit the representations $\Pi_x$ are mutually unitarily equivalent. If $\,\Pi_{x_0}(f)=0$ for some $x_0\in M$, then \eqref{redundnorm}, with $D=M$, shows that $f$ is zero.

\smallskip
In the non-unital case, faithfulness is preserved for any unitization (extension of the representation) $\Pi_x^{\bf U}$, since ${\sf C}^*(\Xi,\o)$ is an essential ideal of ${\sf C}^*(\Xi,\o)^{\bf U}$. We recall that the multiplier algebra is the maximal unitization.
\end{proof}

If $A\subset X$ is invariant, $\Xi_A^A=\Xi_A=\Xi^A$ is a subgroupoid, called {\it the reduction of $\,\Xi$ to $A$}\,. If $A$ is also locally closed (the intersection between an open and a closed set), then $A$ is locally compact and $\Xi_A$  will also be a Hausdorff locally compact groupoid, on which one automatically considers the restriction of the Haar system.

\begin{rem}\label{coerenta}
{\rm If $A$ is closed and invariant, one has an epimorphism of groupoid $C^*$-algebras
\begin{equation*}\label{ses}
\rho_A:{\sf C}_{\rm r}^*(\Xi,\o)\to{\sf C}_{\rm r}^*(\Xi_A,\o_A)\,,
\end{equation*}
where $\o_A$ is the restriction of $\o$ to $\Xi_A^{(2)}$. On continuous compactly supported functions the epimorphism $\rho_A$ acts as a restriction. In such a case, let $x\in A$\,; one has the regular representations $\Pi_x:{\sf C}_{\rm r}^*(\Xi,\o)\to\mathbb B\big[L^2(\Xi_x;\lambda_x)\big]$ and $\Pi_{A,x}:{\sf C}_{\rm r}^*\big(\Xi_A,\o_A\big)\to\mathbb B\big[L^2(\Xi_x;\lambda_x)\big]$ (the fibers $\Xi_x$ and $\Xi_{A,x}$ coincide). It is easy to check, and will be used below, that $\Pi_{x}=\Pi_{A,x}\circ\rho_A$\,.}
\end{rem}

\section{$C^*$-spectral quantities and orbit closures in groupoids}\label{meni}

All over this section {\it a $C^*$-algebraic spectral set $\Si$ is fixed}. The notations below will make the labeling  of $\Si(E\!\mid\!\E)$ by $\E$ unnecessary; it will always be clear to which (twisted groupoid) $C^*$-algebra the element belongs. All the examples from Section \ref{menghina} are available, and probably many more.

\smallskip
We are going to assume that {\it the Hausdorff locally compact $\si$-compact groupoid $\Xi$\,, with unit space $X$, has a Haar system and that $\o$ is a continuous $2$-cocycle on $\,\Xi$}\,. The corresponding groupoid algebra is generally non-unital. Units are needed for spectral theory, and on this occasion we would like to have large unital extensions, to accommodate  as many elements as possible, so we make use of multiplier $C^*$-algebras. Recall that a surjective $^*$-morphism $\Phi:\mathscr C\to\mathscr D$ extends uniquely to a $^*$-morphism $\Phi^{\bf M}\!:\mathscr C^{\bf M}\to\mathscr D^{\bf M}$ between the multiplier algebras; if $\mathscr C$ and $\mathscr D$ are $\si$-unital (separable, in particular), then $\Phi^{\bf M}$ is also surjective. For each locally closed invariant subset $B$ of $X$ one often sets $\mathscr C_B:={\sf C}_{\rm r}^*(\Xi_B,\o_B)$ and $\mathscr C_B^{\bf M}:={\sf C}_{\rm r}^*(\Xi_B,\o_B)^{\bf M}$\,; the index $B=X$ will be skipped. It is also convenient to abbreviate $\H_y:=L^2(\Xi_y;\lambda_y)$ for every $y\in X$. 

\smallskip
Let us denote by $\mathbf{inv}(X)$ the family of all closed, $\Xi$-invariant subsets of $X$. 
For $A,B\in\mathbf{inv}(X)$ with $B\supset A$ one has the restriction morphism 
$$
\rho_{BA}:C_{\rm c}(\Xi_{B})\to C_{\rm c}(\Xi_{A})\,,\quad\rho_{BA}(f):=f|_{\Xi_A}\,,
$$
that extends to an epimorphism $\rho_{BA}:{\sf C}_{\rm r}^*(\Xi_{B},\o_B)\to {\sf C}_{\rm r}^*(\Xi_{A},\o_A)$\,. 
Clearly $\rho_{A}=\rho_{XA}$\,. For an element $F$ belonging to $\mathscr C^{\bf M}$, for every $B\in \mathbf{inv}(X)$ and for every $x\in X$, one sets 
\begin{equation*}\label{tupid}
F_B:=\rho_B^{\bf M}(F)\in\mathscr C_B^{\bf M}
\end{equation*} 
and
\begin{equation}\label{stupid}
H_x:=\Pi_x^{\bf M}(F)\in\mathbb B(\H_x)\,.
\end{equation}
 We are interested in the $C^*$-algebraic sets of type $\Si$ of the elements $F_B$ and of the operators $H_x$\,. 

\smallskip
We start with  a simple result about these restrictions.

\begin{prop}\label{grima}
Let $\,F\in\mathscr C^{\bf M}$\,. For every $A,B\in\mathbf{inv}(X)$ such that $B\supset A$ one has 
$$
\Si(F_B)\supset\Si(F_A)\,.
$$
\end{prop}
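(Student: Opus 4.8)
The plan is to realize the passage from $F_B$ to $F_A$ as the image under a single unital $C^*$-morphism, and then to read the inclusion off directly from Definition \ref{ierarhie}; the contractive half \eqref{pulgar} of the definition is exactly what turns ``restriction to a smaller invariant set'' into ``smaller $\Si$-spectrum''.

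The connecting morphism is already available: the paragraph preceding the statement provides, for $A\subset B$ in $\mathbf{inv}(X)$, the restriction epimorphism $\rho_{BA}:\mathscr C_B\to\mathscr C_A$ (acting as $f\mapsto f|_{\Xi_A}$ on $C_{\rm c}(\Xi_B)$, and obtainable from Remark \ref{coerenta} applied to the groupoid $\Xi_B$ with the closed invariant subset $A$, which is invariant for $\Xi_B$ since $\Xi_{B,x}\subset\Xi_x$). By the extension principle recalled just before the proposition, this surjective $^*$-morphism extends uniquely to a $^*$-morphism $\rho_{BA}^{\bf M}:\mathscr C_B^{\bf M}\to\mathscr C_A^{\bf M}$ between the multiplier algebras, unitality being automatic for such extensions.

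Next I would record the transitivity $\rho_{BA}\circ\rho_B=\rho_A$ and promote it to the multiplier level. On the dense $^*$-subalgebra $C_{\rm c}(\Xi)$ this is immediate, since restricting a function first to $\Xi_B$ and then to $\Xi_A\subset\Xi_B$ returns $f|_{\Xi_A}$; by continuity the identity holds on all of $\mathscr C$. Now $\rho_{BA}^{\bf M}\circ\rho_B^{\bf M}$ and $\rho_A^{\bf M}$ are both $^*$-morphisms $\mathscr C^{\bf M}\to\mathscr C_A^{\bf M}$ restricting to $\rho_A$ on $\mathscr C$, so the uniqueness clause of the multiplier extension forces them to coincide. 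Evaluating at $F$ yields $\rho_{BA}^{\bf M}(F_B)=\rho_{BA}^{\bf M}\big(\rho_B^{\bf M}(F)\big)=\rho_A^{\bf M}(F)=F_A$.

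Finally I would apply Definition \ref{ierarhie} to the unital $C^*$-morphism $\pi:=\rho_{BA}^{\bf M}:\mathscr C_B^{\bf M}\to\mathscr C_A^{\bf M}$ and the element $E:=F_B$: inclusion \eqref{pulgar} then reads $\Si\big(\rho_{BA}^{\bf M}(F_B)\big)\subset\Si(F_B)$, that is $\Si(F_A)\subset\Si(F_B)$, which is precisely the claim. The argument is short, and I expect the only point deserving care to be the compatibility of the multiplier extensions with composition used above; this is not an estimate but merely an appeal to the \emph{uniqueness} of the extension, so no genuine obstacle arises. Note that only the contractive half of Definition \ref{ierarhie} enters, which is why the $\rho_{BA}$ need not be injective.
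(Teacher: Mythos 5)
Your proposal is correct and follows essentially the same route as the paper: both rest on the transitivity relation $\rho_{BA}\circ\rho_B=\rho_A$ of the restriction epimorphisms, its promotion $\rho_{BA}^{\bf M}\circ\rho_B^{\bf M}=\rho_A^{\bf M}$ to the multiplier algebras, and then the contractive half \eqref{pulgar} of Definition \ref{ierarhie} applied to $\rho_{BA}^{\bf M}$ and $F_B$. The only difference is that you spell out the justification of the multiplier-level identity (via uniqueness of extensions of surjective morphisms), which the paper leaves implicit.
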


\begin{proof}
Use the relation $\rho_{BA}\circ \rho_{B}=\rho_{A}$\,, involving epimorphisms, thus leading to $\rho_{BA}^{\bf M}\circ \rho_{B}^{\bf M}=\rho_{A}^{\bf M}$. Then
$$
\Si(F_A)=\Si\big[\rho_A^{\bf M}(F)\big]=\Si\big[\rho_{BA}^{\bf M}\big(\rho_B^{\bf M}(F)\big)\big]\subset\Si\big[\rho_B^{\bf M}(F)\big]=\Si(F_B)\,.
$$
\end{proof}

A subfamily of $\mathbf{inv}(X)$ is particularly interesting, so we are going to need some terminology. 

\begin{defn}\label{cimilituri}
\begin{enumerate}
\item[(i)]
{\it The orbit closure  generated by $x\in X$} is 
$$
\mathcal Q_x\!:=\overline{\mathcal O_x}=\overline{\{z\in X\mid x\approx z\}}\,.
$$ 
Even without specifying $x$\,, {\it an orbit closure} is simply the closure of an orbit.
Then $\,\mathbf{qo}(X)\subset\mathbf{inv}(X)$ denotes the family of all orbit closures. 
\item[(ii)]
We define {\it a quasi-order} on $X$ by $x\prec y\ \Leftrightarrow\ \mathcal Q_x\subset\mathcal Q_y$\,, with the associated {\it orbit closure equivalence relation} (generally weaker than $\,\approx$)
\begin{equation*}\label{qoequiv}
x\sim y\ \Leftrightarrow\ x\prec y\,,\,y\prec x\ \Leftrightarrow\  \mathcal Q_x=\mathcal Q_y\,.
\end{equation*}
\end{enumerate}
\end{defn}

Clearly, the two equivalence relations coincide if and only if different orbits have different closures. If the groupoid is minimal (see below), then all the units are $\sim$ - equivalent, but there could be many different (dense) orbits even in the particular case of group actions.

\smallskip
More generally, if $A\in\mathbf{inv}(X)$\,, the families $\,\mathbf{qo}(A)\subset\mathbf{inv}(A)$ are defined similarly, in terms of the reduced groupoid $\Xi_A$\,; clearly $\mathbf{qo}(A)$ can be seen as the family of all the $\Xi$-orbit closures that are contained in $A$\,.  In particular, if $A=\mathcal Q\equiv\mathcal Q_y$ is an orbit closure, 
$$
\mathbf{qo}(\mathcal Q)=\{\mathcal Q_x\mid x\prec y\}=\{\mathcal Q'\in\mathbf{qo}(X)\mid\mathcal Q'\subset\mathcal Q\}
$$ 
may be very rich in many examples. Subsequently we will also need the notation $\mathbf{qo}_0(\mathcal Q):=\mathbf{qo}(\mathcal Q)\!\setminus\!\{\mathcal Q\}$ for the family of orbit closures strictly contained in $\mathcal Q$\,.

\smallskip
Obviously, by the unitary equivalence of Remark \ref{jiek}, still true for the extensions to the unitizations, one has $\Si(H_x)=\Si(H_y)\ {\rm if}\  x\approx y$\,. The next result, relying on the spectral monotony of Proposition \ref{grima} and on the injectivity result of Proposition \ref{construint}, gives us the finer information that the $C^*$-algebraic spectral quantities actually depends only on the generated orbit closure. 

\begin{thm}\label{operatorizat}
Let $x,y\in X$ and $F\in\mathscr C^{\bf M}$. The operators $H_x$ are as in \eqref{stupid}.
\begin{enumerate}
\item[(i)]
If $x\prec y$ (i.e. if $\mathcal Q_x\subset\mathcal Q_y$), then $\Si\big(H_x\big)\subset\Si\big(H_y\big)$\,. 
\item[(ii)]
In particular, if $x$ and $y$ generate the same orbit closure, written above as $x\sim y$\,, then 
$$
\Si\big(H_x\big)=\Si\big(H_y\big)\,.
$$ 
\end{enumerate}
\end{thm}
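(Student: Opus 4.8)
The plan is to reduce the operator-level statement to the algebra-level monotonicity already recorded in Proposition \ref{grima}. The bridge is the observation that, once one cuts down to the orbit closure generated by a unit, the regular representation at that unit becomes faithful, and therefore transports $\Si$-spectra verbatim.

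Concretely, fix $x\in X$ and set $\mathcal{Q}=\mathcal{Q}_x=\overline{\mathcal{O}_x}$. Since $\mathcal{Q}$ is closed and invariant it belongs to $\mathbf{inv}(X)$, and being a closed subset of the $\si$-compact space $X$ it is itself $\si$-compact; thus the reduction $\Xi_{\mathcal{Q}}$ is again a Hausdorff locally compact $\si$-compact groupoid carrying the restricted Haar system, and it admits the dense orbit $\mathcal{O}_x$. First I would invoke Remark \ref{coerenta} to factor the regular representation as $\Pi_x=\Pi_{\mathcal{Q},x}\circ\rho_{\mathcal{Q}}$; passing to multiplier algebras (all the maps being either nondegenerate representations or surjective morphisms, hence uniquely extendable) gives $\Pi_x^{\bf M}=\Pi_{\mathcal{Q},x}^{\bf M}\circ\rho_{\mathcal{Q}}^{\bf M}$, so that $H_x=\Pi_{\mathcal{Q},x}^{\bf M}(F_{\mathcal{Q}})$. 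Next, since $x$ lies on the dense orbit $\mathcal{O}_x\subset\mathcal{Q}$, Proposition \ref{construint} applied to $(\Xi_{\mathcal{Q}},\o_{\mathcal{Q}})$ shows that $\Pi_{\mathcal{Q},x}$, and with it the unital multiplier extension $\Pi_{\mathcal{Q},x}^{\bf M}$, is faithful. Because $\Si$ is a $C^*$-algebraic spectral set and an injective unital $C^*$-morphism leaves it unchanged (Definition \ref{ierarhie}), I obtain the key identity $\Si(H_x)=\Si(F_{\mathcal{Q}_x})$, valid for every unit.

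With this identity in hand part (i) is immediate: if $x\prec y$ then $\mathcal{Q}_x\subset\mathcal{Q}_y$, both sets are orbit closures lying in $\mathbf{inv}(X)$, and Proposition \ref{grima} (monotony of $B\mapsto\Si(F_B)$ under inclusion of closed invariant sets) yields $\Si(F_{\mathcal{Q}_x})\subset\Si(F_{\mathcal{Q}_y})$; combining the two identities gives $\Si(H_x)\subset\Si(H_y)$. Part (ii) then follows by applying (i) in both directions, since $x\sim y$ means $x\prec y$ and $y\prec x$, whence $\Si(H_x)\subset\Si(H_y)\subset\Si(H_x)$.

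The conceptual content sits entirely in the preparatory results, so the only points requiring care are bookkeeping. The main thing to check is that reducing to the closed invariant set $\mathcal{Q}_x$ preserves all standing hypotheses of Proposition \ref{construint}: $\si$-compactness (inherited by closed subsets of a $\si$-compact space), the restricted Haar system, and above all the existence of a dense orbit, which here is guaranteed tautologically because $\mathcal{O}_x$ is dense in its own closure $\mathcal{Q}_x$. The second, more technical point is the promotion of the factorization $\Pi_x=\Pi_{\mathcal{Q},x}\circ\rho_{\mathcal{Q}}$ to the multiplier level together with the persistence of faithfulness of $\Pi_{\mathcal{Q},x}^{\bf M}$; this rests on the uniqueness of the canonical extension to multipliers of a surjective morphism and of a nondegenerate representation, both of which are already handled in the statement and proof of Proposition \ref{construint}.
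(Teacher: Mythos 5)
Your proof is correct and follows essentially the same route as the paper's: factor $\Pi_x^{\bf M}=\Pi_{\mathcal Q_x,x}^{\bf M}\circ\rho_{\mathcal Q_x}^{\bf M}$ via Remark \ref{coerenta}, use Proposition \ref{construint} on the reduction $\Xi_{\mathcal Q_x}$ (where $\mathcal O_x$ is tautologically dense) to get faithfulness and hence $\Si(H_x)=\Si(F_{\mathcal Q_x})$, then conclude by the monotonicity of Proposition \ref{grima}. Your extra bookkeeping (verifying $\si$-compactness of the closed reduction and the uniqueness of multiplier extensions) is left implicit in the paper but is accurate.
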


\begin{proof}
Clearly, one only needs to show the first statement (i). 

\smallskip
Recall that, $\mathcal Q_x$ being invariant, the ${\rm d}$-fiber $\big(\Xi_{\mathcal Q_x}\big)_x$ of the reduced groupoid $\Xi_{\mathcal Q_x}$ coincides with $\Xi_x$\,. In terms of the extended induced representation $\Pi^{\bf M}_{\mathcal Q_x,x}:\mathscr C_{\mathcal Q_x}^{\bf M}\to\mathbb B(\H_x)$\,, one has by Remark \ref{coerenta}
$$
H_x=\Pi_x^{\bf M}(F)=\Pi_{\mathcal Q_x,x}^{\bf M}[\rho_{\mathcal Q_x}^{\bf M}(F)]=\Pi_{\mathcal Q_x,x}^{\bf M}[F_{\mathcal Q_x}]\,.
$$ 
In addition, the reduced groupoid $\Xi_{\mathcal Q_x}$ has a dense orbit $\mathcal O_x$\,. By Proposition \ref{construint} applied to the groupoid $\Xi_{\mathcal Q_x}$, the representation $\Pi_{\mathcal Q_x,x}$ is injective, thus $\Pi_{\mathcal Q_x,x}^{\bf M}$ is also injective. Similar statements hold for the point $y$\,.  So, by Proposition \ref{grima}, one has
$$
\Si(H_x)=\Si(F_{\mathcal Q_x})\subset\Si(F_{\mathcal Q_y})=\Si(H_y)\,.
$$
\end{proof}

A closer look at points on a given orbit closure gives us more refined spectral information.

\begin{defn}\label{generic}
Let $x\in\mathcal Q\in\mathbf{qo}(X)$\,.
\begin{enumerate}
\item[(i)] 
We say that $x$ is {\it generic} (with respect to $\mathcal Q$) and write $x\in\mathcal Q^{\rm g}$ if $x$ generates $\mathcal Q$\,, i.e. $\mathcal Q_x=\mathcal Q$\,.
\item[(ii)]
In the opposite case we say that $x$ is {\it non-generic} (with respect to $\mathcal Q$) and write $x\in\mathcal Q^{\rm n}$.
\item[(iii)] The orbit closure $\mathcal Q$ is said to be $\Xi${\it -minimal} if it does not contain any non-trivial invariant closed subset.
\end{enumerate}
\end{defn}

The next result follows immediately from the definitions:

\begin{lem}\label{intamplator}
One has $\mathcal Q^{\rm g}=\!\!\underset{x\in\mathcal Q^{\rm g}}{\bigcup}\mathcal O_x$ and $\,\mathcal Q^{\rm n}=\!\!\!\!\underset{\mathcal Q'\in\mathbf{qo}_0(\mathcal Q)}{\bigcup}\!\mathcal Q'$\,. 

The decomposition $\mathcal Q=\mathcal Q^{\rm g}\sqcup\mathcal Q^{\rm n}$ is $\Xi$-invariant and $\mathcal Q^{\rm g}$ is dense in $\mathcal Q$\,.
\end{lem}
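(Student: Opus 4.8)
The plan is to derive all four assertions from two elementary facts already built into the definitions: first, that points on the same orbit generate the same orbit closure (as $x\approx z$ gives $\mathcal O_x=\mathcal O_z$, hence $\mathcal Q_x=\mathcal Q_z$); and second, that every orbit closure $\mathcal Q'$ belongs to $\mathbf{inv}(X)$, so that $x\in\mathcal Q'$ forces $\mathcal O_x\subset\mathcal Q'$ and therefore $\mathcal Q_x=\overline{\mathcal O_x}\subset\mathcal Q'$, the set $\mathcal Q'$ being closed and invariant.

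First I would establish $\mathcal Q^{\rm g}=\bigcup_{x\in\mathcal Q^{\rm g}}\mathcal O_x$. The inclusion $\subset$ is immediate since $x\in\mathcal O_x$. For $\supset$, if $x\in\mathcal Q^{\rm g}$ and $z\in\mathcal O_x$, then $\mathcal Q_z=\mathcal Q_x=\mathcal Q$ by the first fact, so $z\in\mathcal Q^{\rm g}$; thus each orbit of a generic point lies entirely in $\mathcal Q^{\rm g}$. In passing this shows $\mathcal Q^{\rm g}$ is a union of orbits, hence $\Xi$-invariant.

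Next, for $\mathcal Q^{\rm n}=\bigcup_{\mathcal Q'\in\mathbf{qo}_0(\mathcal Q)}\mathcal Q'$ I would argue by double inclusion. If $x\in\mathcal Q^{\rm n}$, then $\mathcal Q_x\subsetneq\mathcal Q$, so $\mathcal Q_x\in\mathbf{qo}_0(\mathcal Q)$ and $x\in\mathcal O_x\subset\mathcal Q_x$ places $x$ in the right-hand union. Conversely, if $x\in\mathcal Q'$ with $\mathcal Q'=\mathcal Q_y\subsetneq\mathcal Q$, the second fact gives $\mathcal Q_x\subset\mathcal Q_y\subsetneq\mathcal Q$, so $x$ is non-generic. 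As $\mathcal Q^{\rm n}$ is then a union of the invariant sets $\mathcal Q'\in\mathbf{qo}_0(\mathcal Q)$, it is $\Xi$-invariant too; together with the obvious dichotomy generic versus non-generic this yields the disjoint, invariant decomposition $\mathcal Q=\mathcal Q^{\rm g}\sqcup\mathcal Q^{\rm n}$.

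Finally, for density I would use that $\mathcal Q$, being an orbit closure, equals $\mathcal Q_y$ for some $y$, which is then generic by Definition~\ref{generic}(i); the first part gives $\mathcal O_y\subset\mathcal Q^{\rm g}$, while $\overline{\mathcal O_y}=\mathcal Q_y=\mathcal Q$, whence $\mathcal Q\subset\overline{\mathcal Q^{\rm g}}\subset\mathcal Q$. I expect no genuine obstacle: the statement is bookkeeping with the quasi-order $\prec$. The single point needing care is the second fact---that a point's orbit closure sits inside every closed invariant set containing it---which presupposes that orbit closures are themselves invariant. That in turn rests on openness of $\d,\r$ (granted by the Haar system) and has already been taken for granted in writing $\mathbf{qo}(X)\subset\mathbf{inv}(X)$, so I may invoke it without further ado.
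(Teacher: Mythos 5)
Your proof is correct and is exactly the definitional bookkeeping the paper intends: the paper offers no written proof, stating only that the lemma ``follows immediately from the definitions,'' and your argument spells out precisely that verification. You also correctly isolate the one implicit ingredient---that orbit closures are invariant (i.e.\ $\mathbf{qo}(X)\subset\mathbf{inv}(X)$), which rests on the openness of $\d$ and $\r$ guaranteed by the Haar system---a point the paper likewise takes for granted.
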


\begin{cor}\label{castrav}
Let $\mathcal Q$ be a given orbit closure. Then $\,\Si(H_x)$ does not depend on $x\in\mathcal Q^{\rm g}$. In particular, if $\mathcal Q$ is $\Xi$-minimal, then $\big\{\Si\big(H_x\big)\!\mid\!x\in\mathcal Q\big\}$ is constant. 
\end{cor}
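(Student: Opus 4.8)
The plan is to reduce the whole statement to Theorem \ref{operatorizat}(ii) together with the definitions of genericity and minimality; no new analytic input should be needed, since the substantive work (faithfulness of the regular representation along a dense orbit and the monotony under reduction) has already been invested in Propositions \ref{construint} and \ref{grima} and in Theorem \ref{operatorizat}.

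First I would dispose of the constancy on $\mathcal Q^{\rm g}$. By Definition \ref{generic}(i), two generic points $x,y\in\mathcal Q^{\rm g}$ satisfy $\mathcal Q_x=\mathcal Q=\mathcal Q_y$, which is precisely the orbit-closure equivalence $x\sim y$ of Definition \ref{cimilituri}(ii). Theorem \ref{operatorizat}(ii) then yields $\Si(H_x)=\Si(H_y)$ at once, so the set $\Si(H_x)$ is independent of the chosen generic point.

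Next I would treat the minimal case by showing that $\Xi$-minimality forces $\mathcal Q^{\rm n}=\emptyset$, i.e. $\mathcal Q=\mathcal Q^{\rm g}$. Fixing an arbitrary $x\in\mathcal Q$, I would observe that $\mathcal Q$ is closed and $\Xi$-invariant and contains $x$, so $\mathcal O_x\subset\mathcal Q$ and hence $\mathcal Q_x=\overline{\mathcal O_x}\subset\mathcal Q$; moreover $\mathcal Q_x$ is non-empty (as $x\in\mathcal Q_x$) and is itself closed and $\Xi$-invariant, being an orbit closure in $\mathbf{qo}(X)\subset\mathbf{inv}(X)$. The $\Xi$-minimality in Definition \ref{generic}(iii) then leaves only $\mathcal Q_x=\mathcal Q$, so $x$ is generic. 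Thus every point of $\mathcal Q$ lies in $\mathcal Q^{\rm g}$, and the first part gives that $\big\{\Si(H_x)\mid x\in\mathcal Q\big\}$ is a single set.

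The only point requiring care—and the closest thing to an obstacle here—is the correct reading of the minimality condition: "non-trivial" in Definition \ref{generic}(iii) must be understood as "non-empty and proper", so that the sole closed invariant subsets of a $\Xi$-minimal $\mathcal Q$ are $\emptyset$ and $\mathcal Q$ itself. With this convention the argument is purely formal. It is worth noting that the genericity statement, combined with Lemma \ref{intamplator}, already delivers a dense subset $\mathcal Q^{\rm g}$ of $\mathcal Q$ on which the $\Si$-spectrum is constant even when $\mathcal Q$ is not minimal, the minimal case being the extreme situation where this dense set exhausts $\mathcal Q$.
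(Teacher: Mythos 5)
Your proposal is correct and follows the same route as the paper: the first assertion is exactly Theorem \ref{operatorizat}(ii) applied to two generic points (which by definition satisfy $x\sim y$), and the minimal case reduces to observing that $\Xi$-minimality forces $\mathcal Q^{\rm n}=\emptyset$. Your additional verification that any orbit closure $\mathcal Q_x\subset\mathcal Q$ is a non-empty closed invariant subset, hence equals $\mathcal Q$ under minimality, simply spells out the step the paper leaves implicit.
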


\begin{proof}
The first assertion is a consequence of Theorem \ref{operatorizat} (ii). Then the second one follows, since one has $\mathcal Q^{\rm n}=\emptyset$ precisely when $\mathcal Q$ is $\Xi$-minimal.
\end{proof}

If one aims only at proving Corollary \ref{castrav}, concentrating only on what happens on the generic points, there is a more direct approach (but using the same ideas). Let us assume (to simplify notations) that the groupoid $\Xi$ is {\it topologically transitive}, i.e. that there is at least one dense orbit. Thus there is an orbit closure $\mathcal Q=X$. We sketch below the proof of the following result:

\begin{prop}\label{strofok}
For a unit $x\in X$ the following assertions are equivalent:
\begin{enumerate}
\item[(i)]
The point $x$ is generic, meaning that its orbit is dense.
\item[(ii)]
The representation $\Pi_x$ is faithful.
\item[(iii)]
For any normal element $f$ of $\,{\sf C}_{\rm r}^*(\Xi,\o)$ one has $\,{\sf sp}[\Pi_x(f)]=\,{\sf sp}(f)$\,.
\end{enumerate}
\end{prop}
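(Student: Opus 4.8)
The plan is to prove the cyclic chain of implications $(i)\Rightarrow(ii)\Rightarrow(iii)\Rightarrow(i)$. The first implication is immediate: under the topological transitivity assumption the orbit closure is $\mathcal Q=X$, so $x$ being generic means precisely that $\mathcal O_x$ is dense in $X$, and Proposition \ref{construint} then asserts that $\Pi_x$ is faithful. The second implication is pure $C^*$-algebra theory and uses no normality whatsoever: an injective $^*$-morphism is isometric and preserves spectra, so since ${\sf sp}$ is a $C^*$-algebraic spectral set (Example \ref{celebazice}), faithfulness of $\Pi_x$ forces ${\sf sp}[\Pi_x(f)]={\sf sp}(f)$ for every element $f$, a fortiori for normal $f$. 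The entire content therefore lies in the return implication $(iii)\Rightarrow(i)$, which I would establish by contraposition.

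So I would assume that $x$ is not generic, i.e. that $\mathcal Q_x=\overline{\mathcal O_x}$ is a proper closed invariant subset of $X$, and then manufacture a normal element on which $\Pi_x$ fails to preserve the spectrum. Since $X\setminus\mathcal Q_x$ is nonempty and open, the set $\,\d^{-1}(X\setminus\mathcal Q_x)$ is a nonempty open subset of $\Xi$, so one may pick $0\ne f\in C_{\rm c}(\Xi)$ supported inside it and set $g:=f^{\star_\o}\!\star_\o\!f$. This $g$ is positive, hence normal, and nonzero: the reduced norm is a genuine norm on $C_{\rm c}(\Xi)$, the regular representations separating its nonzero elements, and concretely $\|g\|=\|f\|^2>0$ by the $C^*$-identity. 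Because $f$ is supported off $\Xi_{\mathcal Q_x}=\d^{-1}(\mathcal Q_x)$, it vanishes there, so $\rho_{\mathcal Q_x}(f)=0$ and therefore $\rho_{\mathcal Q_x}(g)=\rho_{\mathcal Q_x}(f)^{\star_\o}\!\star_\o\!\rho_{\mathcal Q_x}(f)=0$; invoking the factorization $\Pi_x=\Pi_{\mathcal Q_x,x}\circ\rho_{\mathcal Q_x}$ from Remark \ref{coerenta} yields $\Pi_x(g)=0$.

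It then remains to compare the two spectra. On one side $\Pi_x(g)=0$ in the unital algebra $\mathbb B(\H_x)$, so ${\sf sp}[\Pi_x(g)]=\{0\}$. On the other side $g$ is a nonzero positive element of $\,{\sf C}_{\rm r}^*(\Xi,\o)$, and here is the single place where normality is indispensable: for a normal (here positive) element the norm equals the spectral radius, so the value $\|g\|>0$ is attained on ${\sf sp}(g)$ and thus ${\sf sp}(g)\ne\{0\}$. Consequently ${\sf sp}[\Pi_x(g)]=\{0\}\subsetneq{\sf sp}(g)$, which contradicts (iii) and closes the cycle. The main obstacle is exactly this last step: the construction detects the non-faithfulness of $\Pi_x$ through a kernel element, but a nonzero kernel element need not have nontrivial spectrum unless it is normal --- a nonzero nilpotent has spectrum $\{0\}$ --- so the equality of norm and spectral radius is precisely what rescues the implication, and it explains why condition (iii) must be restricted to normal elements.
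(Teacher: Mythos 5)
Your proof is correct, but it closes the circle of implications by a genuinely different route than the paper. The two agree on (i) $\Rightarrow$ (ii) (both invoke Proposition \ref{construint}) and on (ii) $\Rightarrow$ (iii) (an injective $^*$-morphism preserves spectra, with no normality needed). The paper, however, then proves the two remaining implications separately: (iii) $\Rightarrow$ (ii) by the isometry argument --- equality of norm and spectral radius on normal elements forces $\p\Pi_x(f)\p^2=\p\Pi_x(f^{\star_\o}\!\star_\o\!f)\p=\p f^{\star_\o}\!\star_\o\!f\p=\p f\p^2$, so $\Pi_x$ is isometric, hence injective --- and (ii) $\Rightarrow$ (i) by citing Lemma 2.6 of \cite{BB} (untwisted there, but the proof adapts). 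You instead prove (iii) $\Rightarrow$ (i) directly by contraposition, manufacturing a positive $g=f^{\star_\o}\!\star_\o\!f$ with $f$ supported in the open set $\d^{-1}(X\setminus\mathcal Q_x)$, so that $\rho_{\mathcal Q_x}(g)=0$ and hence $\Pi_x(g)=0$ by Remark \ref{coerenta}, while ${\sf sp}(g)\ne\{0\}$ because a nonzero positive element has spectral radius equal to its norm. What your route buys is self-containment: you never need the external lemma from \cite{BB}, since your kernel-element construction is in effect an inlined proof of exactly the content of that lemma (non-dense orbit implies non-faithful $\Pi_x$), fused with the spectral-radius observation; it also isolates cleanly why (iii) must be restricted to normal elements. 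What the paper's route buys is the direct equivalence (ii) $\Leftrightarrow$ (iii) as a purely $C^*$-algebraic statement, independent of any groupoid geometry, with the geometric content quarantined in the single implication (ii) $\Rightarrow$ (i). One small point you assert without proof --- that the reduced norm is a genuine norm on $C_{\rm c}(\Xi)$, i.e.\ the regular representations separate its nonzero elements --- is standard and also implicit in the paper's construction of ${\sf C}^*_{\rm r}(\Xi,\o)$ as a completion, so this is not a gap.
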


The minimal case corresponds to asking the three properties for every $x\in X$; in particular constancy of the spectrum follows. Particular cases of this proposition are in \cite[Th.\,4.3]{LS1} and \cite[Th.\,3.6.8]{Be}.

\begin{proof}
The implication (i) $\Rightarrow$ (ii) is our Proposition \ref{construint}. Concerning the converse, in \cite[Lemma\,2.6]{BB} the untwisted case is treated, but the proof stands as it is if the $2$-cocycle is present.

\smallskip
The equivalence between (ii) and (iii) is simple. On one hand, an injective morphism preserves the spectrum. In the opposite direction, use the fact that for normal elements the spectral radius coincides with the norm, showing that $\Pi_x$ is isometric.
\end{proof}

\section{Pseudo-differential operators associated with twisted dynamical systems}\label{goforit}

Recall that the locally compact group $\G$\,, with full group $C^*$-algebra ${\sf C}^*(\G)$\,,  is called {\it type $I$} if for every irreducible unitary representation $\pi:\G\to\mathbb B(\H_\pi)$\,, all the compact operators on the representation Hilbert space $\H_\pi$ are contained in the $C^*$-algebra $\widetilde\pi\big[{\sf C}^*(\G)\big]$\,. Here $\widetilde\pi$ is the integrated form of $\pi$\,, acting on $L^1(\G)\subset{\sf C}^*(\G)$ by $\widetilde\pi(u):=\int_\G u(a)\pi(a)da$\,. Other characterizations are available \cite{Di}. The group is unimodular if the left Haar measures are also right invariant.

\begin{defn}\label{admisibil}
We call {\it admissible group} a Hausdorff, second countable, unimodular, type $I$ locally compact group with unit $\e$ and Haar measure $d\m(a)\equiv da$\,.
\end{defn}

It will be clear that, for some purposes, only part the assumptions will be needed. Many classes of groups are admissible, as Abelian, compact, exponential (including the nilpotent ones), many types of solvable, connected real algebraic and semi-simple Lie groups. A discrete group is type $I$ if and only if it is the finite extension of an Abelian normal subgroup. 

\smallskip
The definition of a type $I$ group is less important than its main consequence, the existence of a Plancherel measure and a Fourier transformation, that we now briefly review under the full assumption that $\G$ is admissible. Full details can be found in \cite{Di}; see also \cite{MR}. 
We set $\widehat\G$ for {\it the unitary dual of} $\,\G$\,; by definition it is composed of unitary equivalence classes of strongly continuous irreducible Hilbert space unitary representations. There is a standard Borel structure and an important measure $\widehat\m$ (called {\it the Plancherel measure}, unique up to a constant) on $\widehat\G$\,. 
One can choose a $\widehat\m$-measurable field $\big\{\H_\vartheta\!\mid\! \vartheta\in\widehat\G\big\}$ of Hilbert spaces and a measurable section $\widehat\G\ni\vartheta\to\pi_\vartheta$ such that each $\pi_\vartheta$ is an irreducible representation in $\H_\vartheta$ belonging to the class $\vartheta$\,. Sometimes, by abuse of notation, instead of $\pi_\vartheta$ we will write $\vartheta$, identifying irreducible representations (of the measurable choice) with elements of $\widehat\G$\,.

\smallskip
Let us denote by $\mathscr B(\widehat\G):=\int_{\widehat\G}\mathbb B(\H_\vartheta)d\widehat\m(\vartheta)$ the direct integral von Neumann algebra and by $\mathscr B^2(\widehat\G):=\int_{\widehat\G}\mathbb B^2(\H_\vartheta)d\widehat\m(\vartheta)$ the Hilbert space direct integral of Hilbert-Schmidt operator spaces over the base $\big(\widehat\G,\widehat\m\big)$\,, with the usual scalar product
$$
\<\psi_1,\psi_2\>_{\!\mathscr B^2(\widehat\G)}:=\int_{\widehat\G}{\rm Tr}_\vartheta\big[\psi_1(\vartheta)\psi_2(\vartheta)^*\big]d\widehat\m(\vartheta)\,.
$$
{\it The Fourier transform}, basically defined by
\begin{equation*}\label{fourier}
[{\sf F}(u)](\vartheta)\equiv\widetilde\pi_\vartheta(u):=\int_\G u(a)\pi_\vartheta(a)^*da\,,
\end{equation*}
is an injective linear contraction $:L^1(\G)\to\mathscr B(\widehat\G)$ and defines a unitary map $:L^2(\G)\to\mathscr B^2(\widehat\G)$\,. On suitable subspaces of $\mathscr B^2(\widehat\G)$\,, the explicit form of the inverse reads
\begin{equation}\label{fourierinv}
\big[{\sf F}^{-1}(\psi)\big](a)=\int_{\widehat\G}{\rm Tr}_\vartheta[\psi(\vartheta)\pi_\vartheta(a)]d\widehat\m(\vartheta)\,.
\end{equation}

\smallskip
These preparations would be enough to introduce the pseudo-differential calculus of \cite{MR}. But we are interested here in its twisted generalization studied in \cite{BM}, and even in an extension to dynamical systems, generalizing the treatment of \cite{BLM} (see also \cite[Sect.\,7.4]{MR}). So there is one more ingredient to introduce.

\smallskip
Let $(\th,\O)$ be {\it a twisted action} of the admissible group $\G$ on the Hausdorff locally compact $\si$-compact space $X$. This means that $\th:X\times\G\to X$ is {\it a continuous action} to the left $(\th_b\circ\th_a=\th_{ba}\,, \forall\,a,b\in\G)$ and 
\begin{equation}\label{savorbim}
\O:\G\times\G\to C(X;\T):=\{\phi:X\to\T\mid \phi\ {\rm is\ continuous}\}
\end{equation}
is a continuous $2$-cocycle of the group opposite to $\G$\,, for which the composition is $(a,b)\to ba$\,, with values in the unitary group $C(X;\T)$ of the multiplier algebra of the $C^*$-algebra $C_0(X)$\,. With the notation $\Th_a(\phi):=\phi\circ\th_a$ (leading to a right action)\,, by definition, $\O$ is required to satisfy
\begin{equation}\label{statistic}
\O(a,b)\O(ba,c)=\Th_a[\O(b,c)]\O(a,cb)\,,\quad\forall\,a,b,c\in\G\,,
\end{equation}
$$
\O(a,{\rm e})={\bf 1}=\O({\rm e},a)\,,\quad\forall\,a\in\G\,.
$$
The {\it $2$-coboundaries} are here defined as 
\begin{equation}\label{coboundari}
\O(a,b)\equiv[\delta^1\!(\Lambda)](a,b):=\Lambda(a)\Th_a[\Lambda(b)]\Lambda(ba)^{-1},
\end{equation} 
for some continuous function $\Lambda:\G\to C(X;\T)$\,.

\smallskip
Let us denote by $C_0(X)\overline\otimes L^1(\G)$ the completed projective tensor product of the Banach spaces $C_0(X)$ and $L^1(\G)$ and by 
$$
\mathbf F\equiv{\rm id}\overline\otimes{\sf F}:C_0(X)\overline\otimes L^1(\G)\to C_0(X)\overline\otimes \mathscr B(\widehat\G)
$$ 
the partial Fourier transformation in the second variable.

\begin{defn}\label{biere}
The {\it twisted pseudo-differential operator} associated to the symbol $\Phi\in\mathbf F\big[C_0(X)\overline\otimes L^1(\G)\big]$ and to the point $x\in X$ acts on $u\in L^2(\G)$ as
\begin{equation}\label{bere}
\big[{\sf Op}_x(\Phi)u\big](a):=\int_\G\!\int_{\widehat\G}\O_x\big(b,ab^{-1}\big){\rm Tr}_\vartheta\big[\Phi\big(\th_b(x),\vartheta\big)\pi_\vartheta(ab^{-1})\big]u(b)d\widehat m(\vartheta)db\,.
\end{equation}
\end{defn}

We send to \cite{BM,MR} for technical details, properties and extensions. See also Remarks \ref{dumireste} and \ref{diverse} and Lemma \ref{vaevien}. There are connections with the well-developped theory of pseudodifferential calculus on Lie groupoids. Indeed, in the smooth case and when $\Omega\equiv 1$\,, (4.5) is the formula for a negative-order pseudodifferential operators on the transformation groupoid $X\rtimes \G$\,, with (reduced) Schwartz kernel $k(x,a):={\rm Tr}_\vartheta\big[\Phi(x,\vartheta)\pi_\vartheta(a)\big]$\,. We cannot develop this here; see \cite{ALN,LMN,NWX} and references therein.

\begin{rem}\label{diversse}
{\rm Formula \eqref{bere} would look more familiar for Abelian $\G$\,, for which the Fourier theory is simpler: $\widehat\G$ is a group (the Pontryagin dual), with Haar measure $d\widehat\m(\vartheta)\equiv d\vartheta$\,, and the irreducible representations are all $1$-dimensional (characters). In particular the symbol $\Phi$ is scalar-valued, being simply a complex function on the "phase-space" $\G\times\widehat\G$\,, so the trace in \eqref{bere} is spurious. One gets
\begin{equation*}\label{bereta}
\big[{\sf Op}_x(\Phi)u\big](a)=\int_\G\!\int_{\widehat\G}\O_x\big(b,a-b\big)\vartheta(a-b)\Phi\big(\th_b(x),\vartheta\big)u(b)d\vartheta db\,.
\end{equation*}
Let us forget the cocycle for a moment, assuming that $\O\equiv 1$\,. If $\G=\R^n$, then $\widehat\G$ identifies to $\R^n$ and $\vartheta(a-b)$ reads $e^{i(a-b)\vartheta}$. If, in addition, $X=\G=\R^n$ and $\th$ is the (free, transitive) action of $\R^n$ on itself by translations, then the operators $\big\{{\sf Op}_x(\Phi)\mid x\in\R^n\big\}$ are mutually unitarily equivalent and ${\sf Op}_0$ is the usual (right) pseudo-differential quantization. A general version of the $\tau$-quantization (covering the Kohn-Nirenberg and the Weyl calculus) is possible \cite{MR}, but we don't use it here. 
}
\end{rem}

Let us mention the notions and notations we use in connection with the dynamical system $(X,\th,\G)$\,. Remark \ref{primuta} will be relevant to relate these standard notions with those given before in the groupoid context. The {\it orbit} of a point $x\in X$ is denoted by ${\sf O}_x\!:=\th_\G(x)$ and its {\it orbit closure} ${\sf Q}_x\!:=\overline{{\sf O}_x}$ is the closure of the orbit. We write $x\overset{\bullet}{\sim}y$ if ${\sf Q}_x={\sf Q}_y$\,. A given orbit closure ${\sf Q}$ can be decomposed as ${\sf Q}^{\sf g}\sqcup{\sf Q}^{\sf n}$, where $y\in{\sf Q}^{\sf g}$ ({\it generic point}) if ${\sf Q}_y={\sf Q}$ and $z\in{\sf Q}^{\sf n}$ ({\it non-generic point}) if ${\sf Q}_z$ is strictly contained in ${\sf Q}$\,.  The proof of the next result will be given later, after suitable preparations.

\begin{thm}\label{fiveoclock}
Suppose that $\Si$ is a $C^*$-algebraic spectral set. Let $\G$ be an admissible group and $(\th,\O)$ a twisted action of $\,\G$ on the Hausdorff locally compact space $X$. Let $\Phi\in\mathbf F\big[C_0(X)\overline\otimes L^1(\G)\big]$\,.
\begin{enumerate}
\item[(i)]
If $\,x\overset{\bullet}{\sim}y$\,, then $\Si\big[{\sf Op}_x(\Phi)\big]=\Si\big[{\sf Op}_y(\Phi)\big]$\,.
\item[(ii)]
In particular if $\,{\sf Q}\subset X$ is a minimal orbit closure, then the operators $\big\{{\sf Op}_x(\Phi)\mid x\in{\sf Q}\big\}$ have the same spectrum, the same norm, the same $\epsilon$-pseudospectrum and the same numerical range.
\end{enumerate}
\end{thm}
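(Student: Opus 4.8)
The plan is to realize the operators ${\sf Op}_x(\Phi)$ as the regular representations $H_x=\Pi_x^{\bf M}(F)$ of a single element $F$ attached to the transformation groupoid of the action, and then to read off both parts directly from the abstract results of Section \ref{meni}. First I would form the transformation groupoid $\Xi:=X\rtimes\G$, whose unit space is $X$ and whose underlying set is $X\times\G$, with source and range read off from $\th$ and with the right Haar system furnished fibrewise by the Haar measure $d\m$ of $\G$. The group $2$-cocycle $\O$ of \eqref{statistic}, taking values in $C(X;\T)$, translates into a continuous groupoid $2$-cocycle $\o$ on $\Xi^{(2)}$ in the sense of Definition \ref{asteluza}: the normalization in \eqref{savorbim}--\eqref{statistic} matches \eqref{normaliz}, and the groupoid identity \eqref{doicocic} is exactly the transcription of \eqref{statistic} under the dictionary between composable pairs of $\Xi$ and triples $(a,b,c)$ in $\G$.

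Second, I would identify the symbol calculus with the groupoid convolution. Applying the inverse partial Fourier transform $\mathbf F^{-1}$ to $\Phi$ yields, via \eqref{fourierinv}, the kernel $k(x,a)={\rm Tr}_\vartheta\big[\Phi(x,\vartheta)\pi_\vartheta(a)\big]$, naturally a function on $X\times\G\cong\Xi$ which, because $\Phi\in\mathbf F\big[C_0(X)\overline\otimes L^1(\G)\big]$, defines an element $F$ of $\,{\sf C}_{\rm r}^*(\Xi,\o)\subset\mathscr C^{\bf M}$. Identifying the $\d$-fibre $\Xi_x$ with $\G$ gives a unitary $L^2(\Xi_x;\lambda_x)\cong L^2(\G)$, and under this identification the twisted convolution \eqref{mutilaw} by $F$ should reproduce precisely the integral \eqref{bere}, so that $H_x=\Pi_x^{\bf M}(F)$ coincides with ${\sf Op}_x(\Phi)$ for every $x\in X$. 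This matching is the step I expect to be the main obstacle: although it is essentially bookkeeping, one must carefully reconcile the cocycle of the \emph{opposite} group appearing in \eqref{savorbim}--\eqref{statistic} with the groupoid cocycle $\o$, keep track of left versus right actions in passing from $\th$ to $\d,\r$, and verify that the symbol class genuinely lands in the (reduced) algebra rather than in a merely formal convolution algebra.

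Third, I would check that the dynamical data match the groupoid data: the $\G$-orbit ${\sf O}_x=\th_\G(x)$ equals the groupoid orbit $\mathcal O_x=\r(\Xi_x)$, hence ${\sf Q}_x=\mathcal Q_x$ and the relation $x\overset{\bullet}{\sim}y$ is the same as $x\sim y$, while dynamical minimality of $\,{\sf Q}$ is exactly $\Xi$-minimality; this is the content of Remark \ref{primuta}. With these identifications in place, part (i) is immediate from Theorem \ref{operatorizat}(ii): $x\overset{\bullet}{\sim}y$ gives $\Si(H_x)=\Si(H_y)$, that is $\Si\big[{\sf Op}_x(\Phi)\big]=\Si\big[{\sf Op}_y(\Phi)\big]$.

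Finally, for part (ii) I would apply Corollary \ref{castrav} to the minimal orbit closure $\mathcal Q={\sf Q}$: since $\mathcal Q^{\rm n}=\emptyset$, the family $\{\Si(H_x)\mid x\in\mathcal Q\}$ is constant for every $C^*$-algebraic spectral set $\Si$. Specializing $\Si$ to the spectrum (Example \ref{celebazice}), to the $\epsilon$-pseudospectrum (Lemma \ref{sontork}) and to the numerical range (Lemma \ref{sontorc}) yields the stated constancy of all three. For the norm I would observe that, $\mathcal Q$ being minimal, every $x\in\mathcal Q$ generates $\mathcal Q$, so $F_{\mathcal Q_x}=F_{\mathcal Q}$ and $H_x=\Pi_{\mathcal Q,x}^{\bf M}(F_{\mathcal Q})$ with $\Pi_{\mathcal Q,x}$ injective by Proposition \ref{construint} applied to $\Xi_{\mathcal Q}$; an injective $C^*$-morphism is isometric, whence $\p\!H_x\!\p=\p\!F_{\mathcal Q}\!\p$ is independent of $x$.
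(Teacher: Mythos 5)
Your proposal is correct and follows essentially the same route as the paper: construct the transformation groupoid with the group-to-groupoid cocycle dictionary (Lemma \ref{zaconecsan}, Remark \ref{primuta}), identify ${\sf Op}_x(\Phi)$ with the regular representation $\Pi_x$ applied to $({\rm id}\otimes{\sf F}^{-1})(\Phi)$ via the crossed-product identification (Remark \ref{dumireste}, Lemma \ref{vaevien}), and then invoke Theorem \ref{operatorizat} and Corollary \ref{castrav} together with the examples of Section \ref{menghina}. Your closing faithfulness-implies-isometry argument for constancy of the norm spells out a point the paper leaves implicit, and it is valid because Proposition \ref{construint} applies at every point of a minimal orbit closure.
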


The assumptions of the theorem will hold true below. We first put the setting above in a groupoid perspective and then we will deduce the theorem from our previous results.
Define {\it the transformation groupoid} $\,\Xi:=X\!\rtimes_\th\G$\,, equal to $X\!\times\!\G$ as a set, with the product topology and the structure mappings
\begin{equation}\label{fitze}
\begin{aligned}
{\rm d}(x,a):=x\,,&\quad {\rm r}(x,a):=\th_a(x)\,,\\
(x,a)^{-1}\!:=\big(\th_a(x),a^{-1}\big)\,,&\quad \big(\th_a(x),b\big)(x,a):=(x,ba)\,.
\end{aligned}
\end{equation}
Thus the fibers are
$$
\Xi_x=\{x\}\times\G\,,\quad\Xi^x=\big\{\big(\th_{a^{-1}}(x),a\big)\big\vert\,a\in\G\big\}\cong\G
$$
and $\Xi^x_x$ may be identified with the isotropy group of $x$ under the action $\th$\,. 

\begin{rem}\label{primuta}
The transformation groupoid $X\!\rtimes_\th\G$ is Hausdorff, locally compact and possesses a Haar system. The standard notions associated to the dynamical system $(X,\th,\G)$ correspond to those of the transformation groupoid: For $x,y\in X$ one has ${\sf O}_x=\mathcal O_x$\,, ${\sf Q}_x=\mathcal Q_x$\,, ${\sf Q}^{\rm g}_x=\mathcal Q^{\rm g}_x$\,, ${\sf Q}^{\rm n}_x=\mathcal Q^{\rm n}_x$ and $x\overset{\bullet}{\sim} y$ if and only if $x\sim y$\,; this follows easily from the explicit form of ${\rm d}$ and ${\rm r}$\,. 
\end{rem}

The next result about $2$-cocycles may be found in \cite{Gi}; for the convenience of the reader, and because our conventions are different from those of \cite{Gi}, we indicate a proof. In Example \ref{inverse} one may find an example of physical interest.

\begin{lem}\label{zaconecsan}
The formula 
\begin{equation*}\label{obeg}
\o\big((\th_a(x),b),(x,a)\big):=[\O(a,b)](x)\equiv\O_x(a,b)
\end{equation*}
provides a one-to-one correspondence between group $2$-cocycles \eqref{savorbim} associated to the action $\th$ and groupoid $2$-cocycles $\o$ of the transformation groupoid $\,\Xi:=X\!\rtimes_\th\!\G$\,. The correspondence $\o\leftrightarrow\O$ preserves cohomology (coboundaries correspond to coboundaries).
\end{lem}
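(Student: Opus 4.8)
The plan is to realize the stated formula as a transcription of the group cocycle identity \eqref{statistic} into the groupoid cocycle identity \eqref{doicocic}, once composable pairs are correctly parametrized. First I would record that, by the structure maps \eqref{fitze}, every composable pair of $\Xi=X\rtimes_\th\G$ has a unique presentation $\big((\th_a(x),b),(x,a)\big)$ with $x\in X$, $a,b\in\G$: writing the right factor as $\eta=(x,a)$ forces $\d(\xi)=\r(\eta)=\th_a(x)$, so the left factor is $\xi=(\th_a(x),b)$ for a unique $b$. Thus $(x,a,b)\mapsto\big((\th_a(x),b),(x,a)\big)$ is a homeomorphism of $X\times\G\times\G$ onto $\Xi^{(2)}$, and the assignment $\o\leftrightarrow\O$ is a bijection between arbitrary functions $\Xi^{(2)}\to\T$ and functions $\G\times\G\to C(X;\T)$. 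Continuity matches under this homeomorphism: $\o$ is continuous on $\Xi^{(2)}$ exactly when $(x,a,b)\mapsto[\O(a,b)](x)$ is jointly continuous, which is the meaning of continuity of the $C(X;\T)$-valued map $\O$ (via the exponential law, using that $X$ is locally compact).

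Next I would feed three composable arrows into \eqref{doicocic}. Taking $\zeta=(x,a)$, $\eta=(\th_a(x),b)$ and $\xi=(\th_{ba}(x),c)$, the products are $\eta\zeta=(x,ba)$ and $\xi\eta=(\th_a(x),cb)$, so the four terms read
\[
\o(\xi,\eta)=\Th_a[\O(b,c)](x),\quad \o(\xi\eta,\zeta)=\O_x(a,cb),\quad \o(\eta,\zeta)=\O_x(a,b),\quad \o(\xi,\eta\zeta)=\O_x(ba,c).
\]
Hence \eqref{doicocic} becomes, pointwise in $x$ and for all $a,b,c$, the identity $\Th_a[\O(b,c)]\,\O(a,cb)=\O(a,b)\,\O(ba,c)$, which is exactly the opposite-group cocycle condition \eqref{statistic}; conversely \eqref{statistic} yields \eqref{doicocic} on every composable triple. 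The normalization is equally immediate: the units of $\Xi$ are the pairs $(x,\e)$, and evaluating $\o$ on a composable pair having a unit as one factor produces $\O_x(\e,a)$ or $\O_x(a,\e)$, so $\o(\xi,x)=1=\o(x,\eta)$ is equivalent to $\O(a,\e)=\mathbf{1}=\O(\e,a)$.

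Finally, for the cohomological statement I would exhibit the coboundary correspondence explicitly. Given a normalized continuous $\Lambda:\G\to C(X;\T)$ I set $\si(x,a):=[\Lambda(a)](x)$, a continuous map $\Xi\to\T$ equal to $1$ on units (since $\Lambda(\e)=\mathbf{1}$); applied to the canonical pair $\big((\th_a(x),b),(x,a)\big)$, the same bookkeeping shows $\delta^1(\si)(\xi,\eta)=\Th_a[\Lambda(b)](x)\,[\Lambda(a)](x)\,[\Lambda(ba)](x)^{-1}$, which is precisely $[\delta^1(\Lambda)(a,b)](x)$ from \eqref{coboundari}. Reading $\si$ off the groupoid side by $\Lambda_x(a):=\si(x,a)$ inverts this, so $B^2$ corresponds to $B^2$; combined with the cocycle bijection this gives an isomorphism of the two second cohomology groups. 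The only genuinely non-algebraic point — and the one I would be most careful about — is the continuity transfer through the exponential law and the topology on the unitary group $C(X;\T)$ of the multiplier algebra of $C_0(X)$; the cocycle translation itself is a bookkeeping exercise whose sole subtlety is respecting the opposite-group convention, which is what places the twist $\Th_a$ on the factor $\O(b,c)$ rather than elsewhere.
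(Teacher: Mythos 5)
Your proposal is correct and follows essentially the same route as the paper's proof: parametrize composable pairs as $\big((\th_a(x),b),(x,a)\big)$, check that \eqref{doicocic} transcribes term-by-term into \eqref{statistic} (with the twist $\Th_a$ landing on $\O(b,c)$), and match coboundaries via $\si(x,a):=[\Lambda(a)](x)$. The only difference is that you spell out what the paper compresses into ``the verifications are straightforward'' — the homeomorphism $X\times\G\times\G\cong\Xi^{(2)}$ giving bijectivity, the normalization conditions, and the continuity transfer via the exponential law — all of which are accurate.
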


\begin{proof}
The verifications are straightforward. For example, assuming that $(\th,\O)$ is a twisted action of $\G$ on $X$, one computes for three composable elements $\xi:=\big(\th_{ba}(x),c\big),\eta:=\big(\th_a(x),b\big),\zeta:=(x,a)$
$$
\begin{aligned}
\o(\xi,\eta)\,\o(\xi\eta,\zeta)&=\o\big(\big(\th_{ba}(x),c\big),\big(\th_a(x),b)\big)\,\o\big((\th_a(x),cb),(x,a)\big)\\
&=[\O(b,c)]\big[\th_a(x)\big]\,[\O(a,cb)](x)\\
&=\big(\Th_a[\O(b,c)]\O(a,cb)\big)(x)\\
&=\big[(\O(a,b)\O(ba,c)\big](x)\\
&=[\O(a,b)](x)\,[\O(ba,c)](x)\\
&=\o\big((\th_a(x),b),(x,a)\big)\,\o\big(\big(\th_{ba}(x),c\big),(x,ba)\big)\\
&=\o(\eta,\zeta)\,\o(\xi,\eta\zeta)\,.
\end{aligned}
$$
The fourth equality is \eqref{statistic}; the others are just consequences of the definitions. 

\smallskip
On the other hand, if $\O=\delta^1(\Si)$\,, as in \eqref{coboundari}, one sets $\si=\Xi\to\mathbb T\,,\,\si(x,a):=[\Si(a)](x)$ and then
$$
\begin{aligned}
\o(\eta,\zeta)&=\o\big((\th_a(x),b),(x,a)\big)=[\delta^1(\Si)(a,b)](x)\\
&=\big(\Si(a)\Th_a[\Si(b)]\Si(ba)^{-1}\big)(x)\\
&=[\Si(a)](x)\,[\Si(b)]\big(\th_a(x)\big)\,[\Si(ba)](x)^{-1}\\
&=\si(\zeta)\si(\eta)\si(\eta\zeta)^{-1},
\end{aligned}
$$
so $\o=\delta^1(\si)$ is a groupoid $2$-coboundary.
\end{proof}

\begin{rem}\label{dumireste}
{\rm It follows easily \cite{Gi} that one can identify the reduced twisted groupoid $C^*$-algebra ${\sf C}^*_{\rm r}\big(X\!\rtimes_\th\!\G,\o\big)$ with {\it the reduced twisted crossed product} \cite{PR1} $C_0(X)\!\rtimes_{\Th,{\rm r}}^{\Omega}\G$\,. This makes the magnetic pseudo-differential calculus in $\R^n$ \cite{IMP,MP,MPR2} part of the twisted groupoid framework. The $C^*$-algebra $C_0(X)\!\rtimes_{\Th,{\rm r}}^\Omega\!\G$ is a suitable closure of the Banach $^*$-algebra $C_0(X)\overline\otimes L^1(\G)$\,, with a certain $^*$-algebraic structure defined by the twisted action $(\Th,\O)$\,.}
\end{rem}

\begin{lem}\label{vaevien}
For every $x\in X$ one has ${\sf Op}_x(\Phi)=\Pi_x\big[\big({\rm id}\otimes{\sf F}^{-1}\big)(\Phi)\big]$\,.
\end{lem}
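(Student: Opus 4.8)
The plan is to prove the identity by unwinding every definition on a convenient dense subalgebra and then extending by continuity. Write $f:=({\rm id}\otimes{\sf F}^{-1})(\Phi)\in C_0(X)\overline\otimes L^1(\G)$ and regard it as a function on $\Xi=X\!\rtimes_\th\G$; by the inversion formula \eqref{fourierinv} it is given by $f(x,a)=\int_{\widehat\G}{\rm Tr}_\vartheta[\Phi(x,\vartheta)\pi_\vartheta(a)]\,d\widehat m(\vartheta)$. Fixing $x\in X$, I would identify the $\d$-fiber $\Xi_x=\{x\}\times\G$ with $\G$, so that $\H_x=L^2(\Xi_x;\lambda_x)$ becomes $L^2(\G)$ and the Haar-system measure $\lambda_x$ becomes the Haar measure $db$. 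The strategy is to compute $\Pi_x(f)u$ for $u\in L^2(\G)$ directly from the convolution formula, match it termwise with \eqref{bere}, and then appeal to continuity to cover all of $\mathbf F\big[C_0(X)\overline\otimes L^1(\G)\big]$.

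The core of the argument is the computation. For $\xi=(x,a)\in\Xi_x$, the second form of the twisted convolution in \eqref{mutilaw}, applied to the regular representation $\Pi_x(f)u=f\star_\o u$, reads
$$\big(\Pi_x(f)u\big)(a)=\int_\G f\big((x,a)(x,b)^{-1}\big)\,u(b)\,\o\big((x,a)(x,b)^{-1},(x,b)\big)\,db.$$
Next I would carry out the groupoid bookkeeping using \eqref{fitze}: one finds $(x,b)^{-1}=\big(\th_b(x),b^{-1}\big)$ and $(x,a)\big(\th_b(x),b^{-1}\big)=\big(\th_b(x),ab^{-1}\big)$. The cocycle factor is then evaluated through Lemma \ref{zaconecsan}: matching $\o\big((\th_a(x),b),(x,a)\big)=\O_x(a,b)$ to the present arguments gives $\o\big((\th_b(x),ab^{-1}),(x,b)\big)=\O_x\big(b,ab^{-1}\big)$. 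Substituting $f\big(\th_b(x),ab^{-1}\big)=\int_{\widehat\G}{\rm Tr}_\vartheta\big[\Phi(\th_b(x),\vartheta)\pi_\vartheta(ab^{-1})\big]\,d\widehat m(\vartheta)$ and interchanging the two integrals reproduces verbatim the defining formula \eqref{bere} for $\big[{\sf Op}_x(\Phi)u\big](a)$, which is the claimed equality.

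The computation is essentially routine, and the two points demanding care are elementary: getting the inversion and composition in the transformation groupoid right (so that the base point $\th_b(x)$ and the argument $ab^{-1}$ emerge correctly), and tracking the precise order of the cocycle arguments, where the reversal built into \eqref{fitze} and into Lemma \ref{zaconecsan} must be respected. The one genuinely technical point — the main obstacle — is the passage from the dense subalgebra of elementary tensors in $C_c(X)\otimes C_c(\G)\subset C_c(\Xi)$, where both sides are given by absolutely convergent integrals and Fubini is transparent, to a general symbol $\Phi\in\mathbf F\big[C_0(X)\overline\otimes L^1(\G)\big]$. Here I would invoke that $\Pi_x$ is a contractive $^*$-representation (so continuous for the reduced $C^*$-norm, via the identification of Remark \ref{dumireste}) while ${\sf Op}_x(\Phi)$ depends continuously on $\Phi$ through the $L^1$-type estimate controlling the Schur bound for \eqref{bere}; the pointwise identity on the dense subalgebra then propagates to the whole symbol class.
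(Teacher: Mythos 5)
Your proposal is correct and is essentially the proof the paper gives: the paper's own argument consists of stating the formula
\begin{equation*}
\big[\Pi_x(f)u\big](a)=\int_\G f\big(\th_b(x),ab^{-1}\big)u(b)\,\O_x\big(b,ab^{-1}\big)\,db
\end{equation*}
for the regular representation under the identification $\H_x\cong L^2(\G)$, observing that it makes sense on $C_0(X)\overline\otimes L^1(\G)\subset{\sf C}^*_{\rm r}(X\!\rtimes_\th\!\G,\o)$ by continuity from $C_{\rm c}(X\times\G)$, and then invoking the Fourier inversion formula \eqref{fourierinv}. Your computation of $(x,a)(x,b)^{-1}=\big(\th_b(x),ab^{-1}\big)$ and of the cocycle factor via Lemma \ref{zaconecsan} is exactly the bookkeeping the paper leaves implicit, and you carry it out correctly; your continuity argument for the extension matches the paper's.

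One caution about the point you single out as the ``main obstacle''. Your claim that on elementary tensors in $C_{\rm c}(X)\otimes C_{\rm c}(\G)$ ``both sides are given by absolutely convergent integrals and Fubini is transparent'' is false in general: already for $\G=\R^n$, a function $\psi\in C_{\rm c}(\R^n)$ need not have integrable Fourier transform, so the $\widehat\G$-integral in \eqref{bere} need not converge absolutely even for such symbols, and the Fubini interchange you invoke is not available. The paper resolves this not by a density argument over nicer symbols but by fiat: \eqref{bere} is \emph{defined} as an iterated integral with the $\widehat\G$-integration performed first, and for $\Phi\in\mathbf F\big[C_0(X)\overline\otimes L^1(\G)\big]$ that inner integral is, by Fourier inversion, precisely $f\big(\th_b(x),ab^{-1}\big)$ with $f=({\rm id}\otimes{\sf F}^{-1})(\Phi)$. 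Under this reading no interchange of integrals is needed and the identity holds for the whole symbol class at once; the only genuine continuity argument required is the one identifying the $C^*$-extension of $\Pi_x$ with the integral formula on $C_0(X)\overline\otimes L^1(\G)$, which you do supply. So your proof is sound once the interchange step is replaced by the iterated-integral convention built into Definition \ref{biere}.
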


\begin{proof}
In terms of the twisted action $(\th,\O)$ the twisted groupoid composition can be written
\begin{equation*}\label{interms}
(f\star_\o\!g)(x,a)=\int_\G f\big(\th_b(x),ab^{-1}\big)g(x,b)\O_x(b,ab^{-1})db\,.
\end{equation*}
For fixed $x$ there is an obvious unitary identification 
$$
L^2(\Xi_x;\lambda_x)=L^2\big(\{x\}\times\G;\delta_x\times\m\big)\ni\tilde u\rightarrow u\in L^2(\G)\,,\quad u(a):=\tilde u(x,a)\,,
$$
under which the regular representation associated to $x$ reads
\begin{equation}\label{regrep}
\big[\Pi_x(f)u\big](a)=\int_\G f\big(\th_b(x),ab^{-1}\big)u(b)\O_x(b,ab^{-1})db\,.
\end{equation}
A priori \eqref{regrep} makes sense as it is on $C_{\rm c}(X\!\times\!\G)$ and extends by continuity to ${\sf C}^*_{\rm r}(X\!\rtimes_\th\!\G,\o)$\,. By Remark \ref{dumireste} 
$$
C_{\rm c}(X\!\times\!\G)\subset C_0(X)\overline\otimes L^1(\G)\subset{\sf C}^*_{\rm r}(X\!\rtimes_\th\!\G,\o)
$$
and \eqref{regrep} also makes sense on $C_0(X)\overline\otimes L^1(\G)$\,. These and the Fourier inversion formula \eqref{fourierinv} imply the result. In \eqref{bere} the right-hand-side should be interpreted as an iterated integral, the integration in $\widehat\G$ being done first.
\end{proof}

{\it Proof of Theorem \ref{fiveoclock}.} The Lemmas above established a full dictionary between the twisted groupoid and the twisted dynamical system settings. Then the theorem follows from the results \ref{operatorizat}, \ref{castrav} and from the examples of Section \ref{menghina}. \qed

\begin{rem}\label{potential}
{\rm For simplicity, in Theorem \ref{fiveoclock}, we only made use of the twisted groupoid algebra (or, equivalently, of the twisted crossed product). General elements of the multiplier algebras, in this case, are not easy to figure out. But a particular case is easily accessible. Suppose that a symbol $\Psi$ only depends on the first variable, i.e. $\Psi(x,\vartheta):=V(x)$ for some bounded continuous function $V:X\to \mathbb C$ (a "potential"). Then ${\sf Op}_x(V)$ is the operator in $L^2(\G)$ of multiplication with the function $\G\ni a\to V[\th_a(x)]\in\mathbb C$ and it can be added in the theorem to ${\sf Op}_x(\Phi)$\,. The groupoid interpretation yields from the fact that the $C^*$-algebra of all bounded continuous complex functions on $X$ embeds naturally as multipliers of the twisted groupoid $C^*$-algebra, as it follows from \cite{Re}[II,\,Prop.\,2.4(ii)] for instance. 
}
\end{rem}

\begin{rem}\label{diverse}
{\rm The twisted pseudo-differential quantization in the $\R^n$-case is mainly motivated by the quantum theory of systems placed in a variable magnetic field and is treated in \cite{BLM,IMP,MP,MPR2}.  In this case the cocycles are imaginary exponentials of the fluxes of the magnetic field through suitable triangles. Even in this very particular case the full Theorem \ref{fiveoclock} is new, especially because of the generality of the spectral set function $\Si$\,. 
}
\end{rem}

\begin{ex}\label{inverse}
{\rm Let us indicate briefly a situation involving magnetic fields \cite{BM,MN1}, that is more general than the one of Remark \ref{diverse}. Keeping the setting of this section, assume in addition that $\G$ is a connected, simply connected and nilpotent Lie group; then it is admissible. Its Lie algebra $\mathfrak g$ is diffeomorphic to $\G$ through the exponential map; its inverse is denoted by $\log:=\exp^{-1}\!:\G\to\mathfrak g$\,. Let $B$ be {\it a magnetic field}, i.e. a closed $2$-form on $\G$\,. It can be seen as a smooth map associating to any $a\in\G$ the skew-symmetric bilinear form $B(a):\mathfrak g\times\mathfrak g\to\R$\,. To make $B$ compatible with the space $X$ on which $\G$ is supposed to act, 
we assume that $\G$ is a dense, open subset of the compact space $X$ and that the action of $\G$ on itself by left translations extends to a continuous action $\th$ of $\G$ on $X$\,. 
The condition on $B$ is that it extends continuously to $X$. Set
\begin{equation*}\label{nothard}
\Gamma\!_B(x;a,b):=\int_0^1\!\int_0^t\! B\big(\th_{\exp[s\log(ba)+(t-s)\log a]}(x)\big)\big(\log a,\log(ba)\big) dsdt\,.
\end{equation*}
It is the flux of the magnetic field $B_x(\cdot):=B\big(\th_\cdot(x)\big)$ through a "triangle" in $\G$ determined by the points $\e,a,ba$\,, defined as the image through $\exp$ of the triangle of vertices $0,\log a,\log(ba)$ in the Lie algebra. Using the definitions and Stokes' formula, one shows that
\begin{equation*}\label{zdricartz}
\o\big((\th_a(x),b),(x,a)\big)\equiv\O_x(a,b):=e^{i\Gamma\!_B(x;a,b)},\quad a,b\in\G\,,\ x\in X
\end{equation*}
defines a $2$-cocycle on the transformation groupoid $X\!\rtimes_\th\!\G$\,, and one can apply Theorem \ref{fiveoclock}.
}
\end{ex}

\begin{ex}\label{averse}
{\rm Suppose that $\G$ is Abelian, as in Remark \ref{diversse}, and denote by ${\rm AP}(\G)$ the $C^*$-algebra of all the complex continuous {\it almost periodic} functions on $\G$ (the set of all the translations of such a function is relatively compact in the uniform norm; equivalently, it can be uniformly approximated by trigonometric polynomials). It is invariant under translations. The spectrum of this $C^*$-algebra (cf. \cite[Sect.\,16]{Di} and \cite[Sect.\,26]{HR}) is called {\it the Bohr compactification of $\,\G$} and is denoted by ${\rm b}\G$\,. Using Pontryagin duality, it can be described as the dual of the dual of $\G$\,, this first dual being given the discrete topology: ${\rm b}\G=\widehat{\big(\widehat\G_d\big)}$\,. Thus its elements are group morphisms $x:\widehat\G\to\T$\,, continuous or not. Since $\G$ is isomorphic with the usual bidual $\widehat{\widehat\G}$ by $a\to\varpi(a)$\,, given by $[\varpi(a)](\vartheta):=\vartheta(a)$ for every $\vartheta\in\widehat\G$\,, our group $\G$ identifies to a dense subgroup of the Bohr group (but its topology is different from the induced topology). The continuous function $\varphi:\G\to\mathbb C$ is almost periodic if and only if it extends to a continuous function ${\rm b}\varphi:{\rm b}\G\to\mathbb C$\,. For us {\it ${\rm b}\G$ is relevant since it is minimal under the action ${\rm b}\G\times\G\ni(x,a)\to a+x\in{\rm b}\G$}\,; equivalently, ${\rm AP}(\G)$ is {\it $\G$-simple}, i.e.\! it has no proper $\G$-invariant closed ideal. Therefore, stated loosely, {\it if $\,\Si$ is a $C^*$-algebraic spectral set, $\Phi$ has almost periodic coefficients and $\G\ni c\to\Omega_c(a,b)\in\T$ is almost periodic, then $\Si\big[{\sf Op}_x(\Phi)\big]$ actually does not depend on $x\in{\rm b}\G$}\,. The reader will easily make this statement precise, as a consequence of Theorem \ref{fiveoclock} (b) and the discussion above. One more remark in this setting: if $\Omega$ is trivial and $\Phi$ only depends on the variable $\vartheta$\,, being the Fourier transform of a function $\nu\in L^1(\G)$\,, it is easy to see that ${\sf Op}_x(\Phi)$ is just the operator ${\sf Conv}_\nu$ of convolution with $\nu$ in $L^2(\G)$\,. It does not depend  on $x$\,, but one may look (say) at the perturbation $\Phi+V$\,, cf. Remark \ref{potential}, with $V:\G\to\mathbb C$ continuous and almost periodic. We deduce that {\it $\Si\big({\sf Conv}_\nu+V_x\big)$ is independent of $x$}\,, where $V_x$ denotes the operator of multiplication with the function $\G\ni a\to ({\rm b}V)(a+x)\in\mathbb C$\,, in terms of the unique extension of $V$ to the Bohr compactification, in which the sum $a+x$ is performed.}
\end{ex}

\begin{rem}\label{minimalisim}
{\rm Minimality for locally compact dynamical systems, relevant in Theorem \ref{fiveoclock}, is an amazingly rich and complex phenomenon, going much beyond almost periodicity. It encompasses distal and almost automorphic behavior, and not only. It is not the right place here to make a summary of this topic. We refer to \cite{Au,EE} for comprehensive treatments and especially to \cite{Kn} for a more condensed reference presenting a unified picture. We only provide the reader with the following comparison between almost periodic and the much more general minimal functions (the context is once again non-commutative): The continuous function $\varphi:\G\to\mathbb C$ is almost periodic if and only if for every $\epsilon>0$\,, the set of all its $\epsilon$-quasi-periods 
$$
P_\epsilon(\varphi;\G):=\{b\in\G\mid |\varphi(ba)-\varphi(a)|<\epsilon\,,\ \forall\,a\in\G\}
$$ 
is relatively dense, i.e. one has $P_\epsilon(\varphi;\G)K=\G$ for some compact set $K\subset\G$\,. On the other hand, one calls the function $\varphi$ {\it minimal} if only 
$$
P_\epsilon(\varphi;F):=\{b\in\G\mid |\varphi(ba)-\varphi(a)|<\epsilon\,,\ \forall\,a\in F\}
$$
is relatively dense for every $\epsilon$ and {\rm finite} subset $F\in\G$\,. The interest relies on the fact that {\it $\varphi$ is minimal if and only if the smallest left invariant $C^*$-algebra containing $\varphi$ and composed of bounded continuous functions on $\G$\,, denoted by $\A_\varphi$\,, is $\G$-simple, and this happens exactly when its Gelfand spectrum $X={\rm Sp}(\A_\varphi)$ is a minimal dynamical system}. This allows applying our result on the constancy of the $C^*$algebraic spectral sets to pseudodifferential operators "with minimal coefficients and cocycles".
}
\end{rem}

\section{Groupoids and their non-invariant restrictions}\label{grafitoni}

We start by examining a suitable type of groupoids and their restrictions to certain {\it non-invariant} unit subspaces. This prepares the ground for the abstract results and the examples treated in the following sections. Non-invariance is an essential issue for applications; this brings in some complications. In particular, the problem of restricting the Haar system is not trivial, and this will lead to  Definition \ref{tame}.

\begin{rem}\label{aculici}
For simplicity, we decided not to include the $2$-cocycle $\o$\,, but the interested reader will easily supply the modifications needed for the twisted case. Another reason for skipping $\o$ is that in our second example, treated in Section \ref{germanofoni}, the cohomology of the coarse groupoid $\Xi(X_0,d)$ associated to a uniformly discrete metric space $(X_0,d)$ with bounded geometry is anyway trivial. The example of Section \ref{gerfomoni} can be twisted in a non-trivial way, in terms of twisted partial actions \cite{Ab}. 
\end{rem}

\smallskip
Recall that for any locally closed subset $M$ of $X$ (in particular if $M$ is open or closed)\,, one defines  {\it the restricted groupoid} 
$$
\Xi(M)\!:=\Xi_{M}\cap\Xi^M=\{\xi\in\Xi\mid \d(\xi),\r(\xi)\in M\}\,.
$$ 
If $M$ is invariant, then $\Xi(M)=\Xi_M=\Xi^M$ and this restriction is called {\it a reduction}. Restrictions which are not reductions will appear often subsequently.
Non-invariant restrictions of Haar systems will be needed below and are not a trivial matter. We mention that the terminology reduction - restriction is not standard in the literature.

\begin{defn}\label{tame}
Let $\big\{\lambda_x\mid x\in X\big\}$ be a right Haar system on $\Xi$\,. We say that the subset $M\subset X$ is {\it tame} if it is closed and 
\begin{equation}\label{sarcoidoza}
\lambda(M)_x:=\lambda_x|_{\Xi(M)_x}\,,\quad x\in M
\end{equation}
 defines a right Haar  system for the restricted groupoid $\Xi(M)$\,.
\end{defn}

Right invariance of \eqref{sarcoidoza} follows from the right invariance of the initial Haar system. The issues are the continuity and the full support condition.

\begin{lem}\label{sasperam}
Invariant closed subsets or clopen subsets are tame.
\end{lem}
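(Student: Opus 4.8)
The remark following Definition \ref{tame} grants right invariance of \eqref{sarcoidoza} for free: for $\xi\in\Xi(M)$ the right-translation homeomorphism $\eta\mapsto\eta\xi$ carries $\Xi(M)_{\r(\xi)}$ onto $\Xi(M)_{\d(\xi)}$ (both source and range stay in $M$ under composition), so it transports $\lambda(M)_{\r(\xi)}$ to $\lambda(M)_{\d(\xi)}$ just as the ambient homeomorphism transports $\lambda_{\r(\xi)}$ to $\lambda_{\d(\xi)}$. Hence the plan is to verify only the full support condition and the continuity condition, and to do so separately in the two cases, exploiting that in each case the fibers $\Xi(M)_x$ have a very clean description.

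First I would treat a closed invariant $M$. Here $\Xi(M)=\Xi_M=\d^{-1}(M)$, and for $x\in M$ invariance gives $\r(\xi)\in\mathcal O_x\subset M$ for every $\xi\in\Xi_x$, so $\Xi(M)_x=\Xi_x$ and $\lambda(M)_x=\lambda_x$; full support is then immediate. For continuity I would, given $f\in C_{\rm c}(\Xi_M)$, first extend it by Tietze to $g\in C(\Xi)$ with $g|_{\Xi_M}=f$, then multiply by a cutoff $\chi\in C_{\rm c}(\Xi)$ equal to $1$ on the (compact) support of $f$; the product $\tilde f:=\chi g\in C_{\rm c}(\Xi)$ still satisfies $\tilde f|_{\Xi_M}=f$ (on $\Xi_M$ the factor $\chi$ only matters where $f$ vanishes). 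Then $x\mapsto\int_{\Xi_x}\tilde f\,d\lambda_x$ is continuous on $X$ by the Haar system axiom, and its restriction to $M$ is exactly $x\mapsto\int_{\Xi(M)_x}f\,d\lambda(M)_x$.

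Next I would treat a clopen $M$. Then $\Xi(M)=\d^{-1}(M)\cap\r^{-1}(M)$ is clopen in $\Xi$, and for $x\in M$ one has $\Xi(M)_x=\Xi_x\cap\r^{-1}(M)$, an open subset of $\Xi_x$. Full support follows because restricting a fully supported Radon measure to an open set yields a measure whose support is that open set. For continuity, since $\Xi(M)$ is open I would extend $f\in C_{\rm c}(\Xi(M))$ by zero to $\tilde f\in C_{\rm c}(\Xi)$ (legitimate precisely because $\supp f$ is compact and contained in the open set $\Xi(M)$); as $\tilde f$ vanishes off $\Xi(M)$ and $\Xi_x\cap\Xi(M)=\Xi(M)_x$ for $x\in M$, one gets $\int_{\Xi_x}\tilde f\,d\lambda_x=\int_{\Xi(M)_x}f\,d\lambda(M)_x$, and continuity again descends from the Haar system axiom on $X$ by restriction to $M$.

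The entire content sits in the two extension mechanisms, and that is where I expect the only friction: a Tietze-plus-cutoff extension from the closed subgroupoid $\Xi_M$ in the first case, and extension by zero from the open set $\Xi(M)$ in the second. In both cases the decisive simplification is that $\Xi(M)_x$ is either all of $\Xi_x$ or open in $\Xi_x$, so no genuine "restriction of support" difficulty arises — which is exactly the obstacle that a general tame subset would have to confront.
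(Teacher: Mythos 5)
Your proof is correct and follows essentially the same route as the paper: for clopen $M$ you give exactly the paper's argument (extension by zero from the open set $\Xi(M)$, plus the observation that an open subset of $\Xi(M)_y$ is also open in $\Xi_y$, so full support is inherited from $\lambda_y$). For the closed invariant case the paper merely records the fact as well known; your Tietze-plus-cutoff extension combined with the identity $\Xi(M)_x=\Xi_x$ is the standard argument behind that claim, and it is legitimate here since $\Xi$, being Hausdorff, locally compact and $\sigma$-compact, is normal.
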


\begin{proof}
This is well-known for invariant closed subsets. 

\smallskip
Suppose now that the closed subset $M$ is also open.  Since $\Xi(M)$ is open, any compactly supported continuous function $\varphi$ on $\Xi( M)$ may be identified with a compactly supported continuous function on $\Xi$ (extending by null values), and then the continuity of the map
$$
M\ni y\to\int_{\Xi(M)}\!\varphi(\xi)d\lambda(M)_y(\xi)=\int_{\Xi}\varphi(\xi)d\lambda_y(\xi)\in\mathbb C
$$ 
is clear. One still has to show that the support of $\lambda(M)_y$ is the entire $\Xi(M)_y$\,. Note that $\Xi(M)_y=\Xi(M)\cap\Xi_y$ is open in $\Xi_y$\,. If $A$ is open in $\Xi(M)_y$ it will also be open in $\Xi_y$\,, hence $\lambda_y(A)=\lambda(M)_y(A)=0$ is equivalent to $A=\emptyset$\,, since $\lambda_y$ is a full measure on $\Xi_y$\,.
\end{proof}

\smallskip
The following framework will be used several times below. By {\it amenable groupoid}, we mean "topologically amenable", cf \cite{ADR}. In \cite{Re1} this is shown to be equivalent to Borel amenability. These concepts are too intricate to be reviewed here; what we really need is that amenability, for a $\si$-compact locally compact groupoid with Haar system, implies that the full and the reduced groupoid algebras coincide \cite{Re1}.

\begin{Setting}\label{orange}
{\rm 
\begin{enumerate}
\item[(i)]
Let $\Xi$ be a Hausdorff amenable  locally compact $\si$-compact groupoid over a unit space $X$, with right Haar system $\big\{\lambda_x\!\mid \!x\in X\big\}$. Let $X_0\subset X$ be open and invariant. The points of $ X$ will be denoted by $x,y,z,\dots$\,, or $a,b,c,\dots$ when we are sure that they belong to $X_0$\,. The reductions to $X_0$ and $X_\infty:=X\setminus X_0$ (closed and invariant) are denoted, respectively, by $\Xi(X_0)$ and $\Xi(X_\infty)$\,. 
\item[(ii)]
For a given tame subset $Y$ of $X$ containing $X_\infty$\,, we put $Y_0\!:=X_0\cap Y$  (thus $Y=Y_0\sqcup X_\infty$)\,. Thus we also have the (non-invariant) restricted groupoid $\,\Xi(Y):=\{\xi\in\Xi\!\mid\!{\rm d}(\xi),{\rm r}(\xi)\in Y\}$\,.
\end{enumerate}
}
\end{Setting}

\begin{rem}\label{aceltropic}
{\rm  It is easy to see that $\Xi(Y)$ is a locally closed subgroupoid of $\Xi$\,; then, by \cite[Prop.\,5.1.1]{ADR}, it is amenable. The groupoid $\Xi(Y)$ comes with two extra reductions $\,[\Xi(Y)](Y_0)$ and $\,[\Xi(Y)](X_\infty)$\,. However it is follows from the definitions that $\,[\Xi(Y)](X_\infty)=\,\Xi(X_\infty)$\,, and this will be important. It is clear, by our assumptions, that each appearing groupoid is amenable.
}
\end{rem}

Let us denote by $\Xi_\alpha\equiv\Xi(X_\alpha)$ any of the groupoids introduced above, over the base space $X_\alpha$\,, with $X_\alpha=X_0,Y_0,X_\infty,\!Y,X$. In each case, the right Haar system $\big\{\lambda_{\alpha,x}:=\lambda(X_\alpha)_x\!\mid \!x\in X_\alpha\big\}$ is given, or defined by restriction, by tameness. For the sake of notations, recall that the vector space $C_{\sf c}(\Xi_\alpha)$ of all continuous compactly supported complex functions on $\Xi_\alpha$ is a $^*$-algebra under the involution $f^{\star_\alpha}(\xi):=\overline{f\big(\xi^{-1}\big)}$ and the multiplication
\begin{equation*}\label{multilaw}
(f\star_\alpha g)(\xi):=\int_{\Xi_\alpha}\!f(\eta^{-1})g(\eta\xi)d\lambda_{\alpha,\r(\xi)}(\eta)=\int_{\Xi_\alpha}\!f(\xi\eta^{-1})g(\eta)d\lambda_{\alpha,\d(\xi)}(\eta)\,.
\end{equation*}
Then the groupoid $C^*$-algebra ${\sf C}^*\big(\Xi_\alpha\big)$ (coinciding here with the reduced one ${\sf C}^*_{\sf r}\big(\Xi_\alpha\big)$\,, by amenability) is the completion of the $^*$-algebra $\big(C_{\sf c}(\Xi_\alpha),\star_\alpha,^{\star_\alpha}\!\big)$ with respect to the (reduced) norm
\begin{equation*}\label{iarba}
\p\!f\!\p_{X_\alpha}\,:=\,\sup_{x\in X_\alpha}\big\Vert\Pi_{\alpha,x}(f)\big\Vert_{\mathbb B(\H_{\alpha,x})}\,,
\end{equation*}
where $\H_{\alpha,x}$ is the Hilbert space $L^2\big(\Xi_{\alpha,x};\lambda_{\alpha,x}\big)$ and $\Pi_{\alpha,x}:C_{\sf c}(\Xi_\alpha)\to\mathbb B(\H_{\alpha,x})$ is the regular representation defined by $x$  through the formula
\begin{equation*}\label{iarbamate}
\Pi_{\alpha,x}(f)u:=f\star_\alpha\!u\,,\quad\forall\,f\in C_{\sf c}(\Xi_\alpha)\,,\;u\in\H_{\alpha,x}\,.
\end{equation*}

The next result will be basic; we only state it in the generality we need. The point (ii), treating invariant restrictions (i.\,e.\,reductions) is well-known \cite{MRW} for the full $C^*$-algebras, and recall that we work under amenability assumptions, so the full and the reduced algebras coincide. 
We provide a proof of point (i), since non-invariant restrictions are seldom considered in the literature and since formula \eqref{passion} will also be used in the proof of Theorem \ref{aguacate}. 

\begin{lem}\label{matraguna}
The indices $\alpha,\beta,\gamma$ label groupoids $\Xi_\alpha\supset\Xi_\beta\supset\Xi_\gamma$ as above, the smaller ones being obtained from the larger ones by restriction.
\begin{enumerate}
\item[(i)]
Let us define for $X_\beta$ tame subset of $\,X_\alpha$ the restriction map
\begin{equation*}\label{maselarita}
{\sf R}_{\alpha\beta}: C_{\rm c}\big(\Xi_\alpha\big)\to C_{\rm c}\big(\Xi_\beta\big)\,,\quad {\sf R}_{\alpha\beta}(f):=f|_{\Xi_\beta}.
\end{equation*}
It is linear and involutive and extends to a contraction of the groupoid $C^*$-algebras 
\begin{equation*}\label{musetel}
{\sf R}_{\alpha\beta}: {\sf C}^*\big(\Xi_\alpha\big)\to {\sf C}^*\big(\Xi_\beta\big)\,,
\end{equation*}
such that 
\begin{equation}\label{muschi}
{\sf R}_{\beta\gamma}\circ{\sf R}_{\alpha\beta}={\sf R}_{\alpha\gamma}\quad{\rm if}\ X_\alpha\supset X_\beta\supset X_\gamma\,.
\end{equation}
\item[(ii)]
If $\,X_\beta$ is closed in $X_\alpha$ and invariant under $\Xi_\alpha$\,, the map ${\sf R}_{\alpha\beta}: {\sf C}^*\big(\Xi_\alpha\big)\to {\sf C}^*\big(\Xi_\beta\big)$ is a $C^*$-algebraic morphism and one has the short exact sequence
\begin{equation*}\label{sash}
0\longrightarrow{\sf C}^*\big(\Xi_{X_\alpha\setminus X_\beta}\big)\overset{{\sf J}_{\alpha\beta}}{\longrightarrow}{\sf C}^*(\Xi_\alpha)\overset{{\sf R}_{\alpha\beta}}{\longrightarrow}{\sf C}^*(\Xi_\beta)\longrightarrow 0\,,
\end{equation*}
where ${\sf J}_{\alpha\beta}$ is the extension of the canonical injection of $\,C_{\rm c}\big(\Xi_{X_\alpha\setminus X_\beta}\big)$ into $C_{\rm c}(\Xi_\alpha)$\,.
\end{enumerate}
\end{lem}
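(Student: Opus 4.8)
The plan is to establish the easy structural properties first and then reduce the contractivity statement to a single compression formula relating the regular representations of $\Xi_\alpha$ and $\Xi_\beta$.

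Linearity of ${\sf R}_{\alpha\beta}$ is immediate. For involutivity, recall that $\Xi_\beta=\Xi(X_\beta)$ is a subgroupoid, hence closed under inversion; thus for $\xi\in\Xi_\beta$ one has $\xi^{-1}\in\Xi_\beta$ and
$$
[{\sf R}_{\alpha\beta}(f)]^{\star_\beta}(\xi)=\overline{f(\xi^{-1})}=f^{\star_\alpha}(\xi)={\sf R}_{\alpha\beta}(f^{\star_\alpha})(\xi)\,.
$$
The relation \eqref{muschi} is purely set-theoretic on $C_{\rm c}$: restricting to $\Xi_\beta$ and then to $\Xi_\gamma\subset\Xi_\beta$ equals restricting directly to $\Xi_\gamma$; it passes to the completions by continuity once the contraction property is in hand.

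The core of the argument is the estimate $\p{\sf R}_{\alpha\beta}(f)\p_{X_\beta}\le\p f\p_{X_\alpha}$. Since $X_\beta\subset X_\alpha$, it suffices to prove, for each fixed $x\in X_\beta$, the operator inequality $\p\Pi_{\beta,x}(f|_{\Xi_\beta})\p\le\p\Pi_{\alpha,x}(f)\p$. Tameness enters here decisively: it yields $\lambda_{\beta,x}=\lambda_{\alpha,x}|_{\Xi_{\beta,x}}$, so that extension by zero defines an isometry $J_x:\H_{\beta,x}\hookrightarrow\H_{\alpha,x}$ whose adjoint $J_x^*$ is restriction of functions to $\Xi_{\beta,x}$. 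I would then verify the compression identity
\begin{equation}\label{passion}
\Pi_{\beta,x}(f|_{\Xi_\beta})=J_x^*\,\Pi_{\alpha,x}(f)\,J_x\,,
\end{equation}
on which Theorem \ref{aguacate} will also rely. To see \eqref{passion}, evaluate both sides on $u\in\H_{\beta,x}$ at a point $\xi\in\Xi_{\beta,x}$, using the convolution against $\lambda_{\alpha,x}$; since $J_xu$ vanishes off $\Xi_{\beta,x}$, only $\eta\in\Xi_{\beta,x}$ contribute, and for such $\eta$ one has $\d(\xi\eta^{-1})=\r(\eta)\in X_\beta$ and $\r(\xi\eta^{-1})=\r(\xi)\in X_\beta$, whence $\xi\eta^{-1}\in\Xi_\beta$ and $f(\xi\eta^{-1})=(f|_{\Xi_\beta})(\xi\eta^{-1})$. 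The two integrals thus agree. Contractivity is then automatic, since $J_x$ is an isometry we obtain $\p J_x^*\,\Pi_{\alpha,x}(f)\,J_x\p\le\p\Pi_{\alpha,x}(f)\p$; taking the supremum over $x\in X_\beta$ completes the estimate and lets ${\sf R}_{\alpha\beta}$ extend to the $C^*$-algebras.

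The main obstacle is conceptual rather than computational. Because $X_\beta$ is merely tame and not invariant, ${\sf R}_{\alpha\beta}$ is not multiplicative: restriction fails to commute with convolution precisely when a product $\xi\eta^{-1}$ leaves $\Xi_\beta$, so the morphism argument of part (ii) is unavailable. The identity \eqref{passion} is exactly what survives of that structure, and recognizing $\Pi_{\beta,x}(f|_{\Xi_\beta})$ as the literal corner $J_x^*\Pi_{\alpha,x}(f)J_x$---not merely an operator of smaller norm---is the delicate point that simultaneously yields contractivity and furnishes the formula needed later.
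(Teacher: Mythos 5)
Your proposal is correct and is essentially the paper's own proof: your compression identity $\Pi_{\beta,x}(f|_{\Xi_\beta})=J_x^*\,\Pi_{\alpha,x}(f)\,J_x$ is precisely the paper's key formula $(f\star_\alpha\tilde v)(\xi)=\big(f|_{\Xi_\beta}\!\star_\beta v\big)(\xi)$ for $\xi\in\Xi_{\beta,x}$ (stated in operator rather than pointwise form), and the remaining steps --- linearity, involutivity, the set-theoretic verification of the composition rule on $C_{\rm c}$, reduction of the norm estimate to a fixed $x\in X_\beta$ via the isometric extension-by-zero, and the observation that $\xi,\eta\in\Xi_{\beta,x}$ forces $\xi\eta^{-1}\in\Xi_\beta$ --- coincide with the paper's argument step by step. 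Like the paper, you prove only part (i), the paper handling part (ii) by citation to the literature on invariant reductions, so nothing essential is missing.
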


\begin{proof}
We prove (i). Since $\Xi_\beta$ is closed, the restriction map is correctly defined at the level of continuous compactly supported functions. Both linearity and $^*$-preserving are clear. The relation \eqref{muschi} is also obvious on continuous functions with compact support.

\smallskip
 Contractivity (guaranteeing the extension and then the validity of \eqref{muschi}) follows from the form of the reduced norms and from the inclusion $X_\beta\subset X_\alpha$ if one shows for each $x\in X_\beta$ that
\begin{equation*}\label{mazga}
\big\Vert\Pi_{\beta,x}\big[{\sf R}_{\alpha\beta}(f)\big]\big\Vert_{\mathbb B(\H_{\beta,x})}\le\big\Vert\Pi_{\alpha,x}(f)\big\Vert_{\mathbb B(\H_{\alpha,x})}\,,\quad\forall\,f\in C_{\rm c}\big(\Xi_\alpha\big)\,.
\end{equation*}
As said before, we use on $\Xi_\beta$ the restriction of the right Haar measure on $\Xi_\alpha$\,.
Let us set $\Xi_x^{\alpha\beta}:=\Xi_{\alpha,x}\setminus\Xi_{\beta,x}$\,, so that one has $\Xi_{\alpha,x}=\Xi_{\beta,x}\sqcup\Xi_x^{\alpha\beta}$. Then, denoting by $\lambda_x^{\alpha\beta}$ the restriction of $\lambda_{\alpha,x}$ to the additional set $\Xi_x^{\alpha\beta}$, one has $\H_{\alpha,x}=\H_{\beta,x}\oplus\H_x^{\alpha\beta}$\,, where besides the previous notation $\H_{\gamma,x}:=L^2\big(\Xi_{\gamma,x};\lambda_{\gamma,x}\big)$\,, we introduced the Hilbert space $\H_x^{\alpha\beta}:=L^2\big(\Xi_x^{\alpha\beta};\lambda_x^{\alpha\beta}\big)$\,. There is a canonical isometric embedding $\H_{\beta,x}\ni v\to\tilde v\in\H_{\alpha,x}$ consisting in extending by zero values outside $\Xi_{\beta,x}$\,.
For every $v\in\H_{\beta,x}$ one has
$$
\big\Vert\Pi_{\beta,x}\big[{\sf R}_{\alpha\beta}(f)\big]v\big\Vert_{\H_{\beta,x}}^2\!=\big\Vert f|_{\Xi_{\beta}}\!\star_\beta\! v\big\Vert_{\H_{\beta,x}}^2
\!=\int_{\Xi_{\beta,x}}\big\vert\big(f|_{\Xi_{\beta}}\!\star_\beta\! v\big)(\xi)\big\vert^2 d\lambda_{\beta,x}(\xi)\,,
$$
while for every $u\in\H_{\alpha,x}$ one has
$$
\begin{aligned}
\big\Vert\Pi_{\alpha,x}(f)u\big\Vert_{\H_{\alpha,x}}^2&=\big\Vert f\star_\alpha\! u\big\Vert_{\H_{\alpha,x}}^2\\
&=\int_{\Xi_{\beta,x}}\big\vert(f\star_\alpha\! u)(\xi)\big\vert^2 d\lambda_{\beta,x}(\xi)+\int_{\Xi_x^{\alpha\beta}}\big\vert(f\star_\alpha\! u)(\xi)\big\vert^2 d\lambda_x^{\alpha\beta}(\xi)\,.
\end{aligned}
$$
Since the second term above is positive, and since the unit ball of $\H_{\beta,x}$ embeds isometrically in the unit ball of $\H_{\alpha,x}$\,, it is enough to show that, if $\p\!v\!\p_{\H_{\beta,x}}\,\le 1$\,, one has
\begin{equation}\label{necesitata}
\int_{\Xi_{\beta,x}}\!\big\vert\big(f|_{\Xi_{\beta}}\!\star_\beta\!v\big)(\xi)\big\vert^2 d\lambda_{\beta,x}(\xi)=\int_{\Xi_{\beta,x}}\!\big\vert\big(f\!\star_\alpha \!\tilde v\big)(\xi)\big\vert^2 d\lambda_{\beta,x}(\xi)\,.
\end{equation}
But for $\xi\in\Xi_{\beta,x}$\,, since $\tilde v$ is null on $\Xi_x^{\alpha\beta}$, we can write
\begin{equation}\label{passion}
\begin{aligned}
(f\star_\alpha\!\tilde v)(\xi)&=\int_{\Xi_{\alpha,x}}\!\!f\big(\xi\eta^{-1}\big)\tilde v(\eta) d\lambda_{\alpha,x}(\eta)\\
&=\int_{\Xi_{\beta,x}}\!\!f|_{\Xi_\beta}\!\big(\xi\eta^{-1}\big)v(\eta) d\lambda_{\beta,x}(\eta)=(f|_{\Xi_{\beta}}\!\star_\beta v\big)(\xi)\,;
\end{aligned}
\end{equation}
for the second equality we also used the fact that if $\xi,\eta\in\Xi_{\beta,x}$\,, then $\xi\eta^{-1}\!\in\Xi_\beta$\,. So in \eqref{necesitata} the two integrands are equal, and the proof is finished.
\end{proof}

\begin{rem}\label{licheni}
{\rm Multiplicativity is no longer true for ${\sf R}_{\alpha\beta}$ if $X_\beta$ is not invariant! When comparing $\big[{\sf R}_{\alpha\beta}(f\star_\alpha g)\big](\xi)$ with $\big[{\sf R}_{\alpha\beta}(f)\star_\beta{\sf R}_{\alpha\beta}(g)\big](\xi)$\,, there is an extra integration over $\Xi_{\alpha,x}\setminus\Xi_{\beta,x}$ (with $x:=r(\xi)\in X_\beta$) in the first expression. In particular, the restriction maps ${\sf C}^*(\Xi)\mapsto{\sf C}^*\big(\Xi_Y\big)$ and ${\sf C}^*\big(\Xi_X\big)\mapsto{\sf C}^*\big(\Xi_Y\big)$ are not $^*$-morphisms. If $X_\beta$ is invariant, one simply has $\Xi_{\alpha,x}\setminus\Xi_{\beta,x}=\emptyset$ for every $x\in X_\beta$ and multiplicativity is restored.
When $X_\beta$ is a clopen set, ${\sf C}^*_{\rm r}(\Xi_\beta)\subset{\sf C}^*_{\rm r}(\Xi_\alpha)$ and ${\sf R}_{\alpha\beta}$ satisfies the conditional expectation property of \cite[Def,\,1.3]{Ri}, meaning that
$$
{\sf R}_{\alpha\beta}(f\star g)={\sf R}_{\alpha\beta}(f)\star g\quad{\rm and}\quad{\sf R}_{\alpha\beta}(g\star f)=g\star {\sf R}_{\alpha\beta}(f)
$$
for any $f\in{\sf C}^*_{\rm r}(\Xi_\alpha)$ and $g\in{\sf C}^*_{\rm r}(\Xi_\beta)$\,. This is exactly the content of \eqref{passion} and the commutativity of the East diagram in \eqref{conopida}. We recall that in our case the full and the reduced groupoid $C^*$-algebras coincide.
}
\end{rem}

\section{The intrinsic Decomposition Principle}\label{grafomoni}

Besides Definition \ref{ierarhie}, we are also going to use

\begin{defn}\label{treclaesentiale}
Let $\Si$ be a $C^*$-algebraic spectral set. If $\mathscr K$ is a (closed $^*$-invariant) ideal of the unital $C^*$-algebra $\mathscr E$ and $\nu:\mathscr E\to\mathscr E/\mathscr K$ is the canonical quotient epimorphism, one defines {\it the $\mathscr K$-essential $\Si$-spectrum} through
\begin{equation}\label{esentialul}
\Si^\mathscr K _{\rm ess}(E\!\mid\!\E):=\Si\big(\nu(E)\!\mid\!\mathscr E/\mathscr K\big)\subset\Si(E\!\mid\!\E)\,.
\end{equation}
\end{defn}

\begin{rem}\label{derivat}
{\rm Of course, this is just a convenient derived object. In particular, if $\Si={\sf sp}$\,, $\E$ is a $C^*$-algebra of bounded linear operators in the Hilbert space $\H$ and $\mathcal K:=\E\cap\mathbb K(\H)$\,, one gets the usual essential spectrum. This one has other useful well-known descriptions, in terms of Fredholm properties, for instance. For self-adjoint operators it is the complement with respect to the spectrum of the family of isolated eigenvalues with finite multiplicity. Such a phenomenon may occur in other situations. Subsection 3.3. in \cite{MN1} refers to essential numerical ranges. Very many concepts that can go under the name "essential $\epsilon$-pseudospectra" can be found in \cite{Je}, and probably in many other places. Some of them are also interesting in a Banach space setting, but many of them are fitting our $C^*$-algebraic scheme. 
}
\end{rem}

For a non-unital $C^*$-algebra $\mathscr E$, its minimal unitization (obtained by adjoining a unit) is denoted by $\mathscr E^\bullet$. Whenever $\mathscr E$ is unital, we could set simply $\mathscr E^\bullet\!:=\mathscr E$. Concentrating on the non-unital case, if $\mu:\E\to\F$, we set $\mu^\bullet:\E^\bullet\to\F^\bullet$ by $\mu^\bullet(E+\lambda):=\mu(E)+\lambda$\,. The unital case allows a slightly simpler approach.

\begin{thm}\label{prune}
We are placed in the Setting \ref{orange}. Let $h\in{\sf C}^*(\Xi)^\bullet$ and $h_Y:={\sf R}_{XY}^\bullet(h)\in{\sf C}^*[\Xi(Y)]^\bullet$, where ${\sf R}_{XY}$ is a particular case of the restriction maps of the previous subsection. Then 
\begin{equation*}\label{dezvoltat}
\Si_{\rm ess}^{{\sf C}^*\![\Xi(X_0)]}\big(h\!\mid\!{\sf C}^*(\Xi)^\bullet\big)=\Si_{\rm ess}^{{\sf C}^*\![\Xi(Y_0]}\big(h_Y\!\mid\!{\sf C}^*[\Xi(Y)]^\bullet\big)\,.
\end{equation*}
\end{thm}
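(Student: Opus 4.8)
The plan is to recognize that \emph{both} essential $\Si$-spectra appearing in the statement are, after a canonical identification, computed in one and the same quotient algebra, namely the reduced groupoid algebra ${\sf C}^*[\Xi(X_\infty)]^\bullet$ of the reduction to $X_\infty$, and that the two images of $h$ produced there coincide. The whole argument is then a bookkeeping of Lemma \ref{matraguna}(ii), Remark \ref{aceltropic}, and the composition rule \eqref{muschi}.

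First I would write down the two short exact sequences furnished by Lemma \ref{matraguna}(ii). Since $X_\infty$ is closed and $\Xi$-invariant with complement $X_0$, the reduction sequence reads
\begin{equation*}
0\longrightarrow{\sf C}^*[\Xi(X_0)]\longrightarrow{\sf C}^*(\Xi)\overset{{\sf R}_{XX_\infty}}{\longrightarrow}{\sf C}^*[\Xi(X_\infty)]\longrightarrow 0\,,
\end{equation*}
so ${\sf R}_{XX_\infty}$ identifies ${\sf C}^*(\Xi)/{\sf C}^*[\Xi(X_0)]$ with ${\sf C}^*[\Xi(X_\infty)]$. For the groupoid $\Xi(Y)$ I would use that $X_\infty$ is closed in $Y$ and invariant under $\Xi(Y)$, with complement $Y_0$; the crucial point is that, by Remark \ref{aceltropic}, one has $[\Xi(Y)](X_\infty)=\Xi(X_\infty)$, so Lemma \ref{matraguna}(ii) applied to $\Xi(Y)$ gives
\begin{equation*}
0\longrightarrow{\sf C}^*[\Xi(Y_0)]\longrightarrow{\sf C}^*[\Xi(Y)]\overset{{\sf R}_{YX_\infty}}{\longrightarrow}{\sf C}^*[\Xi(X_\infty)]\longrightarrow 0\,,
\end{equation*}
whence ${\sf R}_{YX_\infty}$ identifies ${\sf C}^*[\Xi(Y)]/{\sf C}^*[\Xi(Y_0)]$ with the \emph{same} algebra ${\sf C}^*[\Xi(X_\infty)]$.

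Next I would pass to the minimal unitizations. Each ${\sf R}$ above is a surjective $^*$-morphism, so its unital extension ${\sf R}^\bullet$ is a surjection of the unitizations with the same kernel, and each sequence induces a $^*$-isomorphism of unital algebras, ${\sf C}^*(\Xi)^\bullet/{\sf C}^*[\Xi(X_0)]\cong{\sf C}^*[\Xi(X_\infty)]^\bullet$ and ${\sf C}^*[\Xi(Y)]^\bullet/{\sf C}^*[\Xi(Y_0)]\cong{\sf C}^*[\Xi(X_\infty)]^\bullet$, intertwining the respective quotient maps with ${\sf R}_{XX_\infty}^\bullet$ and ${\sf R}_{YX_\infty}^\bullet$. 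Since a $C^*$-algebraic spectral set is unchanged under injective unital morphisms, in particular under the bijections just produced (Definition \ref{ierarhie}), Definition \ref{treclaesentiale} then yields
\begin{equation*}
\Si_{\rm ess}^{{\sf C}^*[\Xi(X_0)]}\big(h\!\mid\!{\sf C}^*(\Xi)^\bullet\big)=\Si\big({\sf R}_{XX_\infty}^\bullet(h)\!\mid\!{\sf C}^*[\Xi(X_\infty)]^\bullet\big)
\end{equation*}
and likewise
\begin{equation*}
\Si_{\rm ess}^{{\sf C}^*[\Xi(Y_0)]}\big(h_Y\!\mid\!{\sf C}^*[\Xi(Y)]^\bullet\big)=\Si\big({\sf R}_{YX_\infty}^\bullet(h_Y)\!\mid\!{\sf C}^*[\Xi(X_\infty)]^\bullet\big)\,.
\end{equation*}

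Finally I would close with the composition rule. As $X\supset Y\supset X_\infty$, relation \eqref{muschi} gives ${\sf R}_{YX_\infty}\circ{\sf R}_{XY}={\sf R}_{XX_\infty}$, hence ${\sf R}_{YX_\infty}^\bullet\circ{\sf R}_{XY}^\bullet={\sf R}_{XX_\infty}^\bullet$ on the unitizations. Since $h_Y={\sf R}_{XY}^\bullet(h)$ by hypothesis, this forces ${\sf R}_{YX_\infty}^\bullet(h_Y)={\sf R}_{XX_\infty}^\bullet(h)$, so the two right-hand sides above coincide and the theorem follows. I expect the only genuine care to be the unitization bookkeeping — verifying that the quotient by the ideal commutes with adjoining a unit and that the identifying isomorphisms extend compatibly to the ${}^\bullet$-level — whereas the conceptual heart, that both essential spectra live on $\Xi(X_\infty)$ and receive the same image of $h$, is precisely encoded by $[\Xi(Y)](X_\infty)=\Xi(X_\infty)$ together with \eqref{muschi}.
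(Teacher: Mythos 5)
Your proposal is correct and follows essentially the same route as the paper: both arguments rest on the two short exact sequences from Lemma \ref{matraguna}(ii) sharing the common quotient ${\sf C}^*[\Xi(X_\infty)]$ (via Remark \ref{aceltropic}), the passage to minimal unitizations giving the isomorphisms \eqref{simeza}, the invariance of $\Si$ under these isomorphisms, and the composition rule \eqref{muschi} identifying ${\sf R}_{YX_\infty}^\bullet(h_Y)$ with ${\sf R}_{XX_\infty}^\bullet(h)$. The paper merely packages the same bookkeeping into the explicit commutative diagram \eqref{colifloc}, whose right square is exactly your appeal to \eqref{muschi}.
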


\begin{proof}
We label canonical injections and restrictions by indicating the unit spaces over which the corresponding groupoids are built. 
By Lemma \ref{matraguna}, one has the following commutative diagram (recall that ${\sf C}^*(\Xi)={\sf C}^*[\Xi(X)]$\,):
\begin{equation*}\label{coliflor}
\begin{diagram}
\node{0}\arrow{e,r}{}
\node{{\sf C}^*[\Xi(X_0)]}\arrow{e,t}{{\sf J}_{X_0X}} \arrow{s,l}{{\sf R}_{X_0Y_0}}\node{{\sf C}^*(\Xi)}\arrow{s,r}{{\sf R}_{XY}}\arrow{e,t}{{\sf R}_{XX_\infty}}\node{{\sf C}^*\big[\Xi(X_\infty)\big]}\arrow{s,r}{{\sf id}}\arrow{e,r}{}\node{0}\\ 
\node{0}\arrow{e,r}{}\node{{\sf C}^*[\Xi(Y_0)]} \arrow{e,t}{{\sf J}_{Y_0Y}} \node{{\sf C}^*[\Xi(Y)]}\arrow{e,t}{{\sf R}_{YX_\infty}}\node{{\sf C}^*\big[\Xi(X_\infty)\big]}\arrow{e,r}{}\node{0}
\end{diagram}
\end{equation*}
The horizontal arrows are $^*$-morphisms, while ${\sf R}_{X_0Y_0}$ and ${\sf R}_{XY}$ are only linear involutive contractions. The two horizontal short sequences are exact (Lemma \ref{matraguna} (ii)). A similar statement holds for the diagram involving the minimal unitalizations and the canonical extensions of maps
\begin{equation}\label{colifloc}
\begin{diagram}
\node{0}\!\arrow{e,r}{}\!
\node{{\sf C}^*[\Xi(X_0)]}\arrow{e,t}{{\sf J}_{X_0X}} \arrow{s,l}{{\sf R}_{X_0Y_0}}\node{{\sf C}^*(\Xi)^\bullet}\arrow{s,r}{{\sf R}^\bullet_{XY}}\arrow{e,t}{{\sf R}^\bullet_{XX_\infty}}\node{{\sf C}^*\big[\Xi(X_\infty)\big]^\bullet}\arrow{s,r}{{\sf id}^\bullet}\arrow{e,r}{}\node{0}\\ 
\node{0}\!\arrow{e,r}{}\node{{\sf C}^*[\Xi(Y_0)]} \arrow{e,t}{{\sf J}_{Y_0Y}} \node{{\sf C}^*[\Xi(Y)]^\bullet}\arrow{e,t}{{\sf R}^\bullet_{YX_\infty}}\node{{\sf C}^*\big[\Xi(X_\infty)\big]^\bullet}\arrow{e,r}{}\node{0}
\end{diagram}
\end{equation}
Thus we have canonical isomorphisms
\begin{equation}\label{simeza}
{\sf C}^*(\Xi)^\bullet/{\sf C}^*\big[\Xi(X_0)\big]\cong{\sf C}^*\big[\Xi(X_\infty)\big]^\bullet\cong{\sf C}^*\big[\Xi(Y)\big]^\bullet/{\sf C}^*\big[\Xi(Y_0)\big]\,.
\end{equation}
Then
$$
\begin{aligned}
\Si_{\rm ess}^{{\sf C}^*\![\Xi(X_0)]}\big(h\!\mid\!{\sf C}^*(\Xi)^\bullet\big)&=\Si\Big({\sf R}_{XX_\infty}^\bullet\!(h)\!\mid\!{\sf C}^*\big[\Xi(X_\infty)\big]^\bullet\Big)\\
&=\Si\Big({\sf R}^\bullet_{YX_\infty}\!\big[{\sf R}^\bullet_{XY}(h)\big]\!\mid\!{\sf C}^*\big[\Xi(X_\infty)\big]^\bullet\Big)\\
&=\Si\Big({\sf R}^\bullet_{YX_\infty}\!\big(h_Y\big)\!\mid\!{\sf C}^*\big[\Xi(X_\infty)\big]^\bullet\Big)\\
&=\Si_{\rm ess}^{{\sf C}^*\![\Xi(Y_0)]}\big(h_Y\!\mid\!{\sf C}^*[\Xi(Y)]^\bullet\big)\,.
\end{aligned}
$$
In the first and the last equalities we used the fact that $\Si$ is left invariant under the isomorphisms appearing in \eqref{simeza}. This allowed replacing images in abstract quotients (needed to define the two $\Si_{\rm ess}$-sets) by a common image in the concrete $C^*$-algebra ${\sf C}^*\big[\Xi(X_\infty)\big]^\bullet$. The second equality follows from the diagram, while the third one is just the definition of $h_Y$\,.
\end{proof}

\begin{ex}\label{xample}
{\rm Let us now assume that $\Xi$ is \textit{\'etale} (i.e. ${\rm r}:\Xi\to\Xi$ is a local homeomorphism) and that $X$ is compact.  Then $\Xi(Y)$ is also \'etale and the $C^*$-algebras ${\sf C}^*(\Xi)$ and ${\sf C}^*[\Xi(Y)]$ are unital (see the discussion in \cite[Sect.\,3.4]{Be} for more details). In addition, by \cite[Prop.\,II.4.1,\,II.4.2]{Re}, there are contractive linear embedings ${\sf C}^*(\Xi)\hookrightarrow C_0(\Xi)$ and ${\sf C}^*[\Xi(Y)]\hookrightarrow C_0[\Xi(Y)]$\,, so {\it all the elements of the reduced groupoid algebras can be seen as continuous functions, decaying at infinity (in the direction of the fibres)}. In this case, {\it the extended restriction operation ${{\sf R}_{XY}}$ is the concrete restriction of functions}.
}
\end{ex}

\section{The Decomposition Principe for bounded operators}\label{gramofoni}

 Still assuming Setting \ref{orange}, we are going to use now a special notation for a special case. If $\Si$ is a $C^*$-algebraic spectral set and $H\in\mathbb B(\H)$ is a bounded linear operator in a Hilbert space one sets
\begin{equation}\label{ofi}
\Si_{\rm ess}(H):=\Si\big(\gamma(H)\!\mid\!\mathbb B(\H)/\mathbb K(\H)\big)\,,
\end{equation}
where $\gamma:\mathbb B(\H)\to\mathbb B(\H)/\mathbb K(\H)$ is the canonical surjection to the Calkin algebra. For $\Si={\sf sp}$ one gets the essential spectrum. Other cases are essential numerical ranges and essential $\epsilon$-pseudospectra, for example.

\smallskip
To transform Theorem \ref{prune} into a significant result referring to operators, we look now for faithful representations of the groupoid $C^*$-algebras, sending the relevant ideals, isomorphically, to the ideal of compact operators. The following setting is convenient:

\begin{defn}\label{hermione}
The Hausdorff amenable locally compact $\si$-compact groupoid $\Xi$ is called {\it standard} if the unit space $\,X$ has a dense open orbit $X_0$ (sometimes called {\it the main orbit}), second countable for the induced topology, and the isotropy of the main orbit is trivial ($\,\Xi(a)=\{a\}\,$ for every $a\in X_0$)\,. We write $X_\infty:=X\setminus X_0$ and assume that it is compact and non-empty, $X$ being a compactification of $X_0$\,.
\end{defn}

The next two remarks are describing two important issues that occur in the standard case.

\begin{rem}\label{indoua}
{\rm The (invariant) reduction $\Xi(X_0)$ to the orbit $X_0$ is isomorphic to the pair groupoid $X_0\!\times\!X_0$\,. And then, by \cite[Th.\,3.1]{MRW}, one has 
$$
{\sf C}^*[\Xi(X_0)]\cong\mathbb K\big[L^2(X_0,\mu)\big]\otimes {\sf C}^*[\Xi(a)]
$$ 
for some full measure $\mu$ on $X_0$\,. In particular ${\sf C}^*[\Xi(X_0)]$ is an elementary $C^*$-algebra, specifically it is isomorphic to the $C^*$-algebra of compact operators in the Hilbert space $L^2(X_0,\mu)$\,.}
\end{rem}

\begin{rem}\label{harry}
{\rm In addition, for each $a\in X_0$\,, the restriction 
$$
{\rm r}_a:={\rm r}|_{\Xi_a}:\Xi_a=\Xi(X_0)_a\to X_0
$$ 
is surjective (since $X_0$ is an orbit) and injective (since the isotropy is trivial). One transports the measure $\lambda_a$ to a (full Radon) measure $\mu$ on $X$ (independent of  $a$\,, by the invariance of the Haar system) and gets a representation 
$$
\Pi_0:{\sf C}^*(\Xi)\to\mathbb B\big[L^2(X_0,\mu)\big]\,,
$$ 
called {\it the vector representation}. Let 
$$
\rho_a:L^2(X_0;\mu)\to L^2\big(\Xi_a;\lambda_a\big)\,,\quad \rho_a(u):=u\circ {\rm r}_a\,.
$$
Then $\Pi_0$ is defined to be unitarily equivalent to the regular representation $\Pi_a$\,:
\begin{equation*}\label{ibidem}
\Pi_0(f)=\rho_a^{-1}\Pi_a(f)\rho_a\,,\quad \Pi_0(f)u=\big[f\star\!(u\circ {\rm r}_a)\big]\circ {\rm r}_a^{-1}.
\end{equation*}}
\end{rem}

\begin{lem}\label{feisful}
We denote by $\Pi_0^\bullet$ the natural extension of $\,\Pi_0$ to the minimal unitization ${\sf C}^*(\Xi)^\bullet$. The representations $\Pi_0$ and $\Pi_0^\bullet$ are faithful.
\end{lem}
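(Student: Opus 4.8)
The plan is to reduce the assertion to the faithfulness of a single regular representation, which is already supplied by Proposition \ref{construint}, and then to handle the unitization by a soft general argument. By Remark \ref{harry}, for any fixed $a\in X_0$ the vector representation $\Pi_0$ is unitarily equivalent to the regular representation $\Pi_a$ through the unitary $\rho_a:L^2(X_0;\mu)\to L^2(\Xi_a;\lambda_a)$, with $\Pi_0(f)=\rho_a^{-1}\Pi_a(f)\rho_a$. Since unitary equivalence preserves kernels, it suffices to prove that $\Pi_a$ is faithful.

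First I would observe that the main orbit $X_0$ is, by Definition \ref{hermione}, a dense orbit in $X$. The groupoid $\Xi$ is Hausdorff, locally compact and $\si$-compact with Haar system, so Proposition \ref{construint} applies (in the untwisted setting $\o=1$) and yields that $\Pi_a$ is faithful for every $a\in X_0$. Because $\Pi_0(f)=\rho_a^{-1}\Pi_a(f)\rho_a$, we have $\Pi_0(f)=0\iff\Pi_a(f)=0$, hence $\ker\Pi_0=\ker\Pi_a=\{0\}$ and $\Pi_0$ is faithful.

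It remains to pass to the unitization. If $\,{\sf C}^*(\Xi)$ happens to be unital, then $\Pi_0^\bullet=\Pi_0$ and there is nothing further to prove. In the non-unital case I would invoke the general fact that a faithful representation of a non-unital $C^*$-algebra always extends faithfully to its minimal unitization: concretely, if $\Pi_0^\bullet(f+\lambda)=\Pi_0(f)+\lambda I=0$ with $\lambda\neq 0$, then $e:=-\lambda^{-1}f$ would satisfy $\Pi_0(e)=I$, and faithfulness of $\Pi_0$ would force $e$ to be a two-sided unit of $\,{\sf C}^*(\Xi)$, contradicting non-unitality; hence $\lambda=0$, then $f=0$, so $\Pi_0^\bullet$ is faithful.

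The argument is essentially immediate once Remark \ref{harry} and Proposition \ref{construint} are in place, so there is no serious obstacle; the only point requiring care is the part built into Remark \ref{harry}, namely that $\,{\rm r}_a$ restricts to a Borel isomorphism $\Xi_a\to X_0$ pushing $\lambda_a$ forward to $\mu$. This is precisely what makes $\rho_a$ unitary and lets one transfer faithfulness from $\Pi_a$ to $\Pi_0$, and it rests in turn on the triviality of the isotropy of the main orbit.
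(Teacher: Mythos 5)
Your proof is correct and takes essentially the same route as the paper: faithfulness of the regular representation $\Pi_a$ at a point of the dense orbit $X_0$, transfer to $\Pi_0$ via the unitary equivalence of Remark \ref{harry}, and the identical unit-creation argument ($\Pi_0(f)={\rm id}$ would force $f$ to be a unit of ${\sf C}^*(\Xi)$) for the minimal unitization. The only cosmetic difference is that you invoke the paper's own Proposition \ref{construint} with $\o=1$ for the faithfulness of $\Pi_a$, whereas the paper cites the external results \cite[Prop.\,2.7]{BB} and \cite[Cor.\,2.4]{KS}; these are the same fact (Proposition \ref{construint} being their twisted generalization), and both versions implicitly use the amenability assumption of Setting \ref{orange} to identify the full and reduced algebras.
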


\begin{proof}
Since $X_0$ is an open dense orbit, the representation $\Pi_a$ is faithful, by \cite[Prop.\,2.7]{BB} (relying on \cite[Cor.\,2.4]{KS}). Thus the vector representation $\Pi_0$ is also faithful. If ${\sf C}^*(\Xi)$ is unital, the proof is already finished.

\smallskip
In the non-unital case, {\it it is enough to show that $\Pi_0(f)$ cannot be the identity operator for any $f\in{\sf C}^*(\Xi)$}\,. To explain this, note that injectivity fails if and only if  $\Pi^\bullet_0(h,\lambda)=\Pi_0(h)+\lambda{\rm id}=0$ may happen for $(h,\lambda)\ne(0,0)$\,. Clearly $\lambda\ne 0$\,, since $\Pi_0$ is already known to be injective. But then the non-injectivity of $\Pi_0^\bullet$ reads $\Pi_0(-h/\lambda)={\rm id}$ without $f:=-h/\lambda$ being null.

\smallskip
So let us check the stated assertion. If $\Pi_0(f)={\rm id}$\,, then for every $g\in{\sf C}^*(\Xi)$ one would have
$$
fg=\Pi_0^{-1}({\rm id})\Pi_0^{-1}\big[\Pi_0(g)\big]=\Pi_0^{-1}\big[{\rm id}\,\Pi_0(g)\big]=g
$$
and similarly $gf=g$\,, implying that ${\sf C}^*(\Xi)$ is actually unital. This shows that $\Pi_0(f)={\rm id}$ is impossible.
\end{proof}

\begin{rem}\label{nectarine}
{\rm {\it Assume now the Setting \ref{orange} with $\Xi$ standard and suppose, in addition, that the complement $X_0\setminus Y_0$ is relatively compact.} A direct application of the definitions shows that $Y_0\subset Y$ is a dense open orbit under $\Xi(Y)$ and that the isotropy over points of $Y_0$ is trivial. So Lemma \ref{feisful} can be applied to the standard groupoid $\Xi(Y)$\,, leading to a vector representation 
$$
\Pi_{Y,0}:{\sf C}^*[\Xi(Y)]\to\mathbb B\big[L^2\big(Y_0,\mu_{Y_0}\big)\big]\,,
$$ 
that is faithful and extends faithfully to the unitization ${\sf C}^*[\Xi(Y)]^\bullet$. Taking $a\in Y_0\subset X_0$\,, it is easily seen that $\mu_{Y_0}$\,, defined to be the image of $\lambda(Y)_a=\lambda_a|_{\Xi(Y)_a}$ through ${\rm r}_{Y,a}:={\rm r}|_{\Xi(Y_0)_a}$, is actually the restriction of $\mu$ to $Y_0$\,.}
\end{rem} 

Let us set, for simplicity, $\H(X_0):=L^2(X_0,\mu)$ and $\H(Y_0):=L^2\big(Y_0,\mu_{Y_0}\big)$\,. By Remark \ref{nectarine}, one can write 
$$
\H(X_0)=\H(Y_0)\oplus L^2(X_0\!\setminus\!Y_0)\,,
$$ 
leading to a canonical injection ${\sf j}_{XY}:\H(Y_0)\to\H(X_0)$ (extension by null values) and to a canonical projection ${\sf p}_{XY}:\H(X_0)\to\H(Y_0)$ (basically a restriction). We define the linear involutive contraction
\begin{equation*}\label{lichie}
\mathfrak R_{XY}:\mathbb B\big[\H(X_0)\big]\to\mathbb B\big[\H(Y_0)\big]\,,\quad \mathfrak R_{XY}(T):={\sf p}_{XY}\circ T\circ{\sf j}_{XY}\,,
\end{equation*}
that clearly restricts to a linear involutive contraction 
$$
\mathfrak R^\mathbb K_{XY}:\mathbb K\big[\H(X_0)\big]\to\mathbb K\big[\H(Y_0)\big]\,.
$$ 
{\it In general, it is not a $C^*$-morphism.} However, $\mathfrak R_{XY}$ and $\mathfrak R^\mathbb K_{XY}$ are conditional expectations.

\begin{thm}\label{aguacate}
Let $\,\Xi$ be a standard groupoid over $X$ with main orbit $X_0$ (cf. Definition \ref{hermione}) and $Y$ a tame subset  of $X$ containing $X_\infty\!:=X\!\setminus\! X_0$\,, such that $Y_0:=Y\cap X_0$ has a relatively compact complement in $X_0$\,. Let $h\in{\sf C}^*(\Xi)^\bullet$ and set
$$
H=\Pi_0^\bullet(h)\in \Pi_0^\bullet\big[{\sf C}^*(\Xi)^\bullet\big]\subset\mathbb B\big[\H(X_0)\big]
$$
and
$$
H_Y:=\mathfrak R_{XY}(H)\in\mathbb B\big[\H(Y_0)\big]\,.
$$ 
For every $C^*$-algebraic spectral set $\Si$\,, using the notation \eqref{ofi}, one has $\Si_{\rm ess}(H)=\Si_{\rm ess}(H_Y)$\,.
\end{thm}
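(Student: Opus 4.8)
The plan is to lift the intrinsic Decomposition Principle of Theorem~\ref{prune} to the operator level through the vector representations $\Pi_0^\bullet$ and $\Pi_{Y,0}^\bullet$, both of which are faithful by Lemma~\ref{feisful} (applied to $\Xi$, and via Remark~\ref{nectarine} to $\Xi(Y)$). The backbone of the argument is that each side of the claimed equality reproduces an abstract essential $\Si$-spectrum of the type occurring in Theorem~\ref{prune}, so that the theorem reduces to two ``faithful transport'' identifications together with one geometric intertwining relation.

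First I would identify the operator essential spectrum with the abstract one. By Remark~\ref{indoua} the ideal ${\sf C}^*[\Xi(X_0)]$ is elementary, and since $\Pi_0$ is the vector (hence multiplicity-free) representation it carries this ideal isomorphically onto $\mathbb K[\H(X_0)]$. As $\Pi_0^\bullet$ is faithful and its image is a $C^*$-subalgebra of $\mathbb B[\H(X_0)]$ containing $\mathbb K[\H(X_0)]$, faithfulness induces an isomorphism
$$
{\sf C}^*(\Xi)^\bullet/{\sf C}^*[\Xi(X_0)]\;\cong\;\Pi_0^\bullet\big[{\sf C}^*(\Xi)^\bullet\big]\big/\mathbb K[\H(X_0)]\,.
$$
The right-hand algebra embeds, via the quotient map $\gamma$, as a $C^*$-subalgebra of the Calkin algebra $\mathbb B[\H(X_0)]/\mathbb K[\H(X_0)]$, and this injective morphism sends the class of $H$ to $\gamma(H)$. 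Because $\Si$ is left unchanged by injective $C^*$-morphisms (Definition~\ref{ierarhie}), I obtain $\Si_{\rm ess}(H)=\Si_{\rm ess}^{{\sf C}^*[\Xi(X_0)]}\big(h\!\mid\!{\sf C}^*(\Xi)^\bullet\big)$; the identical reasoning over $Y$ (using Remark~\ref{nectarine}) gives $\Si_{\rm ess}(H_Y)=\Si_{\rm ess}^{{\sf C}^*[\Xi(Y_0)]}\big(h_Y\!\mid\!{\sf C}^*[\Xi(Y)]^\bullet\big)$, \emph{provided} one knows $H_Y=\Pi_{Y,0}^\bullet(h_Y)$.

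The key compatibility I must establish is exactly this last point, namely that the spatial compression intertwines with the algebraic restriction:
$$
\mathfrak R_{XY}\circ\Pi_0^\bullet=\Pi_{Y,0}^\bullet\circ{\sf R}_{XY}^\bullet\,.
$$
I would prove this by fixing $a\in Y_0\subset X_0$ and conjugating $\Pi_0,\Pi_{Y,0}$ back to the regular representations $\Pi_a,\Pi_{Y,a}$ through the unitaries $\rho_a,\rho_{Y,a}$ of Remark~\ref{harry}. Using that $X_0$ is invariant, one checks $\Xi(Y)_a={\sf r}_a^{-1}(Y_0)$, so the splitting $\H(X_0)=\H(Y_0)\oplus L^2(X_0\!\setminus\!Y_0)$ matches the fiber splitting $\Xi_a=\Xi(Y)_a\sqcup(\Xi_a\!\setminus\!\Xi(Y)_a)$; under this matching ${\sf j}_{XY}$ becomes extension-by-zero in the fiber and ${\sf p}_{XY}$ becomes restriction to $\Xi(Y)_a$. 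Then, for $v\in\H(Y_0)$ with $\tilde v:=\rho_a{\sf j}_{XY}v$ the zero-extension of $w:=\rho_{Y,a}v$, formula~\eqref{passion} of Lemma~\ref{matraguna} yields $(f\star\tilde v)|_{\Xi(Y)_a}={\sf R}_{XY}(f)\star_Y w$ for every $f\in C_{\rm c}(\Xi)$. Transporting back by $\rho_{Y,a}^{-1}$ gives $\mathfrak R_{XY}[\Pi_0(f)]=\Pi_{Y,0}[{\sf R}_{XY}(f)]$, which extends by continuity to ${\sf C}^*(\Xi)$ and then to the unitizations, since $\mathfrak R_{XY}({\rm id})={\sf p}_{XY}{\sf j}_{XY}={\rm id}$ and ${\sf R}_{XY}^\bullet$ preserves the unit.

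With both identifications in hand, the theorem follows by sandwiching Theorem~\ref{prune}:
$$
\Si_{\rm ess}(H)=\Si_{\rm ess}^{{\sf C}^*[\Xi(X_0)]}\big(h\!\mid\!{\sf C}^*(\Xi)^\bullet\big)=\Si_{\rm ess}^{{\sf C}^*[\Xi(Y_0)]}\big(h_Y\!\mid\!{\sf C}^*[\Xi(Y)]^\bullet\big)=\Si_{\rm ess}(H_Y)\,.
$$
I expect the main obstacle to be the intertwining relation $\mathfrak R_{XY}\circ\Pi_0^\bullet=\Pi_{Y,0}^\bullet\circ{\sf R}_{XY}^\bullet$: it forces one to match two a priori different geometric pictures, the abstract fiber restriction governed by \eqref{passion} against the spatial compression $T\mapsto{\sf p}_{XY}\,T\,{\sf j}_{XY}$, and to verify $\Xi(Y)_a={\sf r}_a^{-1}(Y_0)$ so that the relevant orthogonal splittings are genuinely compatible. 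Everything else is a formal consequence of the faithfulness of the vector representations, the identification of the reductions to the main orbits with the compacts, and the injective-morphism invariance built into the notion of a $C^*$-algebraic spectral set.
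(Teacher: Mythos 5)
Your proposal is correct and takes essentially the same approach as the paper's own proof: the paper packages the identical ingredients into the commutative diagram \eqref{conopida}, whose East face is precisely your intertwining relation $\mathfrak R_{XY}\circ\Pi_0^\bullet=\Pi_{Y,0}^\bullet\circ{\sf R}_{XY}^\bullet$, verified there just as you do by reducing to the regular representation at a point $a\in Y_0$ and invoking formula \eqref{passion}. The remaining steps — faithfulness of the vector representations (Lemma \ref{feisful}), the identifications \eqref{summetinf}--\eqref{sumetinf} of the abstract quotients inside the Calkin algebras, and the final appeal to Theorem \ref{prune} — match the paper's argument exactly.
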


\begin{proof}
As it will be shown in the last paragraph of the proof, the main concrete step is to show that the next diagram commutes:
\begin{equation}\label{conopida}
\begin{diagram}
\node{\mathbb K[\H(X_0)]}\arrow{e,t}{}\arrow{s,t}{}\node{}\arrow{e,t}{}\node{}\arrow{e,t}{}\node{\mathbb B[\H(X_0)]}\arrow{s,r}{}\\
\node{}\arrow{s,t}{\mathfrak R^\mathbb K_{XY}}\node{{\sf C}^*[\Xi(X_0)]}\arrow{nw,t}{\Pi^\mathbb K_0}\arrow{e,t}{{\sf J}_{X\X}} \arrow{s,l}{{\sf R}_{X_0Y_0}}\node{{\sf C}^*(\Xi)^\bullet}\arrow{s,r}{{\sf R}_{XY}^\bullet}\arrow{ne,t}{\Pi_0^\bullet}\node{}\arrow{s,r}{\mathfrak R_{XY}}\\ 
\node{}\arrow{s,t}{}\node{{\sf C}^*[\Xi(Y_0)]}\arrow{sw,t}{\Pi_{Y,0}^\mathbb K}\arrow{e,t}{{\sf J}_{Y\Y}} \node{{\sf C}^*[\Xi(Y)]^\bullet}\arrow{se,t}{\Pi_{Y,0}^\bullet}\node{}\arrow{s,r}{}\\
\node{\mathbb K[\H(Y_0)]}\arrow{e,t}{}\node{}\arrow{e,t}{}\node{}\arrow{e,t}{}\node{\mathbb B[\H(Y_0)]}
\end{diagram}
\end{equation}
Notations as those of Lemma \ref{matraguna} were used in this diagram. 
We already know from \eqref{colifloc} that the central part of the diagram \eqref{conopida} commutes. It is convenient to use self-explaining "geographical terminology" for the other subdiagrams in \eqref{conopida}. 

\smallskip
The North and South diagrams also commute. In addition $\Pi^\bullet_0$ and $\Pi^\bullet_{Y,0}$ are faithful, by Lemma \ref{feisful}, while the restrictions $\Pi_{0}^\mathbb K$ and $\Pi_{Y,0}^\mathbb K$ are isomorphisms. We summarize useful information as the isomorphism relations
\begin{equation}\label{summetinf}
{\sf C}^*(\Xi)^\bullet/{\sf C}^*[\Xi(X_0)]\cong\Pi_0^\bullet\big[{\sf C}^*(\Xi)^\bullet\big]/\mathbb K[\H(X_0)]\subset\mathbb B[\H(X_0)]/\mathbb K[\H(X_0)]\,,
\end{equation}
\begin{equation}\label{sumetinf}
{\sf C}^*[\Xi(Y)]^\bullet/{\sf C}^*[\Xi(Y_0)]\cong\Pi_{Y,0}^\bullet\big[{\sf C}^*[\Xi(Y)]^\bullet\big]/\mathbb K[\H(Y_0)]\subset\mathbb B[\H(Y_0)]/\mathbb K[\H(Y_0)]\,.
\end{equation}

If the East diagram commutes, the West diagram will also commute, since it is just a well-defined restriction of the former (actually, the commutativity of the West diagram will not be used below). To check the commutativity of the  East diagram, it is enough to show that
\begin{equation}\label{citron}
{\sf p}_{XY}\circ\Pi_0(f)\circ{\sf j}_{XY}=\Pi_{Y,0}\big(f|_{\Xi(Y)}\big)\,,\quad\forall\,f\in C_{\rm c}(\Xi)\,;
\end{equation}
then the extensions and the unitizations are taken into account easily. To check \eqref{citron}, by the definition of the representations $\Pi_0$ and $\Pi_{Y,0}$\,, it is enough to show that (as functions on $Y_0$)
\begin{equation}\label{grapefruit}
{\sf p}_{XY}\big[\big(f\star(w\circ {\rm r}_a)\big)\circ {\rm r}_a^{-1}\big]=\big[f|_{\Xi(Y)}\star_Y(w\circ {\rm r}_{Y\!,a})\big]\circ {\rm r}_{Y\!,a}^{-1}
\end{equation}
for every $f\in C_{\rm c}(\Xi)$ and $w\in C_{\rm c}(Y_0)$\,, where $a\in Y_0$\,, $\,\star$ is the convolution with respect to the initial groupoid $\Xi$ and $\star_Y$ is the convolution with respect to the restricted groupoid $\Xi(Y)$\,. In the left hand side, $w$ has been tacitly extended by null values outside $Y_0$\,. But \eqref{grapefruit} is essentially \eqref{passion}, with modified notations. Note that $\tilde v:=w\circ {\rm r}_a$ is supported on $\Xi(Y)_a$\,, so it coincides with $w\circ {\rm r}_{Y\!,a}$\,, while ${\sf p}_{XY}$ is just the operation of restricting to $Y_0$\,.

\smallskip
Now we show equality of the essential $\Si$-spectra, using the definitions and the properties of $\Si$\,. On one hand, by \eqref{summetinf}, one has
\begin{equation*}\label{sfecla}
\Si_{\rm ess}(H)=\Si_{\rm ess}[\Pi_0^\bullet(h)]=\Si_{\rm ess}^{{\sf C}^*[\Xi(X_0)]}\big(h\!\mid\!{\sf C}^*(\Xi)^\bullet\big)\,.
\end{equation*}
On the other hand, by \eqref{sumetinf} and the key fact that the East diagram commutes (see \eqref{citron}), one also has
$$
\begin{aligned}
\Si_{\rm ess}(H_Y)&=\Si_{\rm ess}\big[\mathfrak R_{XY}(\Pi_0^\bullet(h))\big]\\
&=\Si_{\rm ess}\big[\Pi_{Y,0}^\bullet\big({\sf R}_{XY}^\bullet(h)\big)\big]\\
&=\Si_{\rm ess}^{{\sf C}^*[\Xi(Y_0)]}\big(h_Y\!\mid\!{\sf C}^*[\Xi(Y)]^\bullet\big)\,,
\end{aligned}
$$
where $h_Y:={\sf R}_{XY}^\bullet(h)$\,. The two expressions are equal, by Theorem \ref{prune}.
\end{proof}

\begin{cor}\label{desprespectru}
In the framework of Theorem \ref{aguacate}, $H$ and $H_Y$ have the same essential spectrum. The operator  $H_Y$ is Fredholm if and only if $H$ is Fredholm. 
\end{cor}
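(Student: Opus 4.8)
The plan is to read off both assertions directly from Theorem \ref{aguacate} by specializing the spectral set and then invoking Atkinson's characterization of Fredholm operators.

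First I would dispose of the statement about essential spectra. The usual spectrum ${\sf sp}$ is a $C^*$-algebraic spectral set (Example \ref{celebazice}), hence an admissible choice of $\Si$ in Theorem \ref{aguacate}. With this choice the abstract quantity $\Si_{\rm ess}$ of \eqref{ofi} becomes, by construction, ${\sf sp}_{\rm ess}(H)={\sf sp}\big(\gamma(H)\!\mid\!\mathbb B[\H(X_0)]/\mathbb K[\H(X_0)]\big)$, i.e. the spectrum of the image of $H$ in the Calkin algebra, which is exactly the ordinary essential spectrum of $H$; the same applies to $H_Y$ acting on $\H(Y_0)$. Here I would recall that, by Remark \ref{indoua} applied to $\Xi$ and (via the standard groupoid $\Xi(Y)$) to $\Xi(Y)$, the ideals ${\sf C}^*[\Xi(X_0)]$ and ${\sf C}^*[\Xi(Y_0)]$ are carried by the faithful vector representations onto the full compact ideals $\mathbb K[\H(X_0)]$ and $\mathbb K[\H(Y_0)]$. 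Consequently the quotients occurring in \eqref{summetinf}--\eqref{sumetinf} embed into the genuine Calkin algebras, so the abstract and the concrete notions of essential spectrum coincide, and Theorem \ref{aguacate} yields directly ${\sf sp}_{\rm ess}(H)={\sf sp}_{\rm ess}(H_Y)$.

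Next I would treat Fredholmness. By Atkinson's theorem a bounded operator is Fredholm precisely when its image in the Calkin algebra is invertible, equivalently when $0$ lies outside its essential spectrum. Thus $H$ is Fredholm if and only if $0\notin{\sf sp}_{\rm ess}(H)$, and $H_Y$ is Fredholm if and only if $0\notin{\sf sp}_{\rm ess}(H_Y)$. Since the two essential spectra have just been shown to coincide, $0$ belongs to one exactly when it belongs to the other, so $H$ is Fredholm precisely when $H_Y$ is.

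Since everything reduces to specializing an already established theorem and to a standard invertibility criterion, there is no serious obstacle. The only point requiring a word of care is the identification described above, namely confirming that the vector representations send ${\sf C}^*[\Xi(X_0)]$ and ${\sf C}^*[\Xi(Y_0)]$ onto \emph{all} of $\mathbb K[\H(X_0)]$ and $\mathbb K[\H(Y_0)]$, so that the $\Si_{\rm ess}$ of \eqref{ofi} is literally the essential spectrum relative to the full compacts; this is exactly what Remark \ref{indoua} (together with the isomorphism assertions for $\Pi_0^{\mathbb K}$ and $\Pi_{Y,0}^{\mathbb K}$ in the proof of Theorem \ref{aguacate}) provides.
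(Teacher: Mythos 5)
Your proposal is correct and takes essentially the same route as the paper: specialize Theorem \ref{aguacate} to $\Si={\sf sp}$, so that the quantity \eqref{ofi} becomes the ordinary essential spectrum, and then deduce the Fredholm statement from Atkinson's theorem. The additional verification you flag via Remark \ref{indoua} is harmless but not needed at this stage, since Theorem \ref{aguacate} is already stated in terms of the genuine Calkin algebras through \eqref{ofi}; that identification is internal to the theorem's proof, not to the corollary.
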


\begin{proof}
The spectrum $\Si={\sf sp}$ is the most important example of $C^*$-algebraic spectral set, to which Theorem \ref{aguacate} apply.

\smallskip
The assertion about the Fredholm property follows immediately from Atkinson's Theorem, saying that an operator acting in a Hilbert space is Fredholm if and only if its image in the Calkin algebra is invertible, i.\,e. if $0$ does not belong to its essential spectrum.
\end{proof}

\section{The Decomposition Principle for partial actions}\label{gerfomoni}

Let us now assume that $X_0$ is a unimodular, amenable, second countable Hausdorff locally compact group with unit ${\sf e}$ and fixed Haar measure $\m$ and that the action of $X_0$ on itself by left translations extends to a continuous action of $X_0$ on an arbitrary $\si$-compact compactification $X\!:=X_0\sqcup X_\infty$\,. So we have a continuous map 
\begin{equation*}\label{ficus}
X_0\times X\ni(a,z)\to\th(a,z)\equiv\th_a(z)\in X
\end{equation*}
such that each $\th_a$ is a homeomorphism, $\th_a(z)=az$ if $z\in X_0$ and $\th_a\circ\th_b=\th_{ab}$ for every $a,b\in X_0$\,. We require the group $X_0$ to sit as an open dense orbit in $X$ and the closed boundary $X_\infty$ is clearly invariant. 

\smallskip
To such a dynamical system $(X,\th,X_0)$ one associates {\it the transformation groupoid} $\Xi:=X\!\rtimes_\th\!X_0$\,. It is the topological product $X\!\times\!X_0$\,, with groupoid maps \eqref{fitze}. From our assumptions, it follows that {\it the transformation groupoid is standard, with main orbit $X_0$}\,. For every $z\in X$, setting $\lambda_z\!:=\delta_z\!\otimes\!\m$\,, one gets a Haar system of the groupoid.

\smallskip
Let $Y_0$ be a closed subset  of $X_0$ with relatively compact complement; we set $Y\!:=Y_0\sqcup X_\infty$\,. It can be shown that if, in addition, $Y_0$ is the closure of its interior, it will be a tame set. Relying on a result of Nica \cite[Sect.1]{Ni},  this has been shown for the case of connected simply connected nilpotent groups in \cite[Prop.4.7]{MN1}. See also \cite{RS}. But the proof holds, with the same proof, in our  more general case. Thus we are placed in the Setting \ref{orange}, we construct the restricted groupoid $\Xi(Y)$ and both $\Xi$ and $\Xi(Y)$ are standard. One can present $\Xi(Y)$ as the groupoid associated to a partial dynamical system \cite{Ex2}; this is not important here.

\smallskip
Thus the results of the previous sections apply. {\it Theorem \ref{prune} may be immediately particularized to this case.} To write down Theorem \ref{aguacate} in the framework of this section, one still has to calculate 
$$
H\!:=\Pi_0(h)\quad{\rm and}\quad  H_Y\!:=\mathfrak R_{XY}(H)={\sf p}_{XY}\circ H\circ{\sf j}_{XY}\,.
$$ 
By the proof of Theorem \ref{aguacate}  one also has $H_Y=\Pi_{Y,0}\big[{\sf R}_{XY}(h)\big]$\,, and this yields another way to compute. We take $h\in C_{\rm c}(\Xi)$ for simplicity, but more general options are possible (as $h\in L^1\big(X_0;C(X)\big)$ for instance).  Being continuous, $h$ is uniquely determined by the values taken on $X_0\!\times\!X_0$\,, which is dense in $\Xi=X\times\!X_0$\,. Whenever the group $X_0$ is discrete, the groupoid $\Xi$ is \'etale and we are placed in the setting of Example \ref{xample}, so actually restrictions act concretely on the entire (reduced) groupoid algebra. 

\smallskip
 Anyhow, one arrives quickly to the formulae
\begin{equation*}\label{vectorial}
(Hu)(a)=\int_{X_0}\! h\big(c,ac^{-1}\big)u(c)d\m(c)\,,\quad u\in L^2(X_0;\m)\,,\quad a\in X_0\,,
\end{equation*}
\begin{equation*}\label{vecdorial}
(H_Y v)(b)=\int_{Y_0}\! h\big(c,bc^{-1}\big)v(c)d\m(c)\,,\quad v\in L^2(Y_0;\m)\,,\quad b\in Y_0\,.
\end{equation*}
Let us consider the particular case 
$$
h(x,a)\!:=\varphi(x)\psi(a)\,,
$$ 
where $\varphi\in C(X)$ (identified with a continuous function on $X_0$ that can be extended continuously on $X$) and (for simplicity) $\psi\in C_{\rm c}(X_0)$\,. Then
\begin{equation*}\label{vectorialy}
(Hu)(a)=\int_{X_0}\!\psi\big(ac^{-1}\big)(\varphi u)(c)d\m(c)\,,
\end{equation*}
meaning that one may write $H={\rm Conv}(\psi){\rm Mult}(\varphi)$\,, the product between a multiplication and a convolution operator. Then one also has 
$$
H_Y\!={\rm Toep}_{Y_0}\!(\psi){\rm Mult}(\varphi|_{Y_0})\,,
$$ 
which is the product between a multiplication operator in $L^2(Y_0)$ and {\it  a Toeplitz-like operator}, i.\,e.\ the compression of a convolution operator. In this case, or in general, {\it the operators $H$ and $H_Y$ have the same $\Si_{\rm ess}$-set, in particular the same essential spectrum, the same essential numerical range and the same $\epsilon$-essential pseudospectrum}. 
In addition, {\it ${\rm id}_{L^2(X_0)}\!+\!H$ is Fredholm in $L^2(X_0)$ if and only if $\,{\rm id}_{L^2(Y_0)}\!+\!H_Y$ is Fredholm in $L^2(Y_0)$}\,.

\begin{cor}\label{pontryagin}
Let $X_0$ be a second countable Abelian locally compact non compact group with Pontryagin dual $\widehat X_0$ and let $\psi\in C_c(X_0)$ (taking $\psi\in L^1(X_0)$ is also possible). For every closed set $Y_0\subset X_0$ that is the closure of its interior and has relatively compact complement, one has
\begin{equation}\label{frotac}
{\sf sp}_{\rm ess}\big[{\rm Toep}_{Y_0}\!(\psi)\big]=\overline{\Big\{\int_{X_0}\!\chi(a)\psi(a)d\m(a)\,\big\vert \,\chi\in\widehat{X}_0\Big\}}\,.
\end{equation}
\end{cor}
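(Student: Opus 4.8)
The plan is to read the corollary as the concrete $\Si={\sf sp}$ instance of Theorem \ref{aguacate}, combined with a Fourier computation of the essential spectrum of a convolution operator on an Abelian group. First I would specialize the discussion preceding the corollary to the symbol $h(x,a):=\psi(a)$, that is, take $\varphi\equiv 1$ in the factorization $h(x,a)=\varphi(x)\psi(a)$; then $H={\rm Conv}(\psi)$ acts on $L^2(X_0)$ and $H_Y={\rm Toep}_{Y_0}(\psi)$ acts on $L^2(Y_0)$. Since $X_0$ is Abelian it is amenable, and by hypothesis it sits as a dense open orbit with trivial isotropy inside its $\si$-compact compactification $X$, so the transformation groupoid $\Xi=X\!\rtimes_\th X_0$ is standard and $Y$ is tame. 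Hence Theorem \ref{aguacate} applies with $\Si={\sf sp}$ (and $h\in{\sf C}^*(\Xi)\subset{\sf C}^*(\Xi)^\bullet$, allowing $h\in L^1(X_0;C(X))$ for the $L^1$-case), giving ${\sf sp}_{\rm ess}[{\rm Toep}_{Y_0}(\psi)]={\sf sp}_{\rm ess}[{\rm Conv}(\psi)]$. This transfers the whole problem to the translation-invariant operator ${\rm Conv}(\psi)$ on the group itself.

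Next I would diagonalize ${\rm Conv}(\psi)$ by the Fourier transform $\mathcal F:L^2(X_0)\to L^2(\widehat{X_0})$, which is unitary and carries ${\rm Conv}(\psi)$ to multiplication $M_g$ by the continuous function $g(\chi):=\int_{X_0}\psi(a)\overline{\chi(a)}\,d\m(a)$ on $L^2(\widehat{X_0},\widehat\m)$. For $\psi\in L^1(X_0)$ the Riemann--Lebesgue lemma gives $g\in C_0(\widehat{X_0})$; since $\widehat\m$ has full support, the essential range of a continuous function coincides with the closure of its range, so ${\sf sp}(M_g)=\overline{\{g(\chi)\mid\chi\in\widehat{X_0}\}}$. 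Under the bijection $\chi\mapsto\chi^{-1}=\overline\chi$ of $\widehat{X_0}$ this set is exactly the right-hand side of \eqref{frotac}, so the ordinary spectrum of ${\rm Conv}(\psi)$ is already the claimed set.

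The point that deserves care --- and the one genuine use of the non-compactness of $X_0$ --- is the passage from the spectrum to the \emph{essential} spectrum. Because $X_0$ is non-compact, its dual $\widehat{X_0}$ is non-discrete, hence $\widehat\m$ is non-atomic; on a non-atomic measure space a multiplication operator is Fredholm only if it is invertible. Indeed, whenever $\widehat\m\{|g-\lambda|<\epsilon\}>0$ one may split that set into infinitely many disjoint pieces of positive measure whose normalized indicators form an orthonormal sequence on which $\|(M_g-\lambda)\cdot\|$ stays below $\epsilon$, and such a sequence obstructs Fredholmness of $M_g-\lambda$. Therefore ${\sf sp}_{\rm ess}(M_g)={\sf sp}(M_g)$, and combining this with the previous two paragraphs yields ${\sf sp}_{\rm ess}[{\rm Toep}_{Y_0}(\psi)]={\sf sp}_{\rm ess}[{\rm Conv}(\psi)]=\overline{\{\int_{X_0}\chi(a)\psi(a)\,d\m(a)\mid\chi\in\widehat{X_0}\}}$, which is \eqref{frotac}. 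The only remaining bookkeeping is to note that for $\psi\in L^1(X_0)$ rather than $C_c(X_0)$ one works in the larger $L^1(X_0;C(X))$ framework already permitted above, the convolution operator and its Fourier diagonalization being unchanged.
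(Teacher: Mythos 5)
Your proposal is correct, and its first two steps coincide with the paper's: specialize the framework of Section \ref{gerfomoni} to $\varphi\equiv 1$, invoke Theorem \ref{aguacate} with $\Si={\sf sp}$ to get ${\sf sp}_{\rm ess}\big[{\rm Toep}_{Y_0}\!(\psi)\big]={\sf sp}_{\rm ess}\big[{\rm Conv}(\psi)\big]$, and then diagonalize ${\rm Conv}(\psi)$ by the Fourier transform to identify its \emph{full} spectrum with the right-hand side of \eqref{frotac}. One small imprecision: the compactification $X$ is not ``by hypothesis'' available --- the corollary's hypotheses mention no compactification, so it must be chosen; the paper takes the Alexandrov compactification $X:=X_0\sqcup\{\infty\}$ with $\infty$ a fixed point of the extended translation action, and this choice (which always works) is what puts you in the Setting of Section \ref{gerfomoni}. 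Where you genuinely depart from the paper is the last step, the equality ${\sf sp}_{\rm ess}\big[{\rm Conv}(\psi)\big]={\sf sp}\big[{\rm Conv}(\psi)\big]$. You argue on the dual side: $X_0$ non-compact forces $\widehat X_0$ non-discrete, hence $\widehat\m$ non-atomic, and a multiplication operator over a non-atomic measure space has purely essential spectrum, via orthonormal (hence weakly null) quasi-eigenvector sequences obstructing Fredholmness; this argument is complete and correct. The paper instead argues on the group side: convolution operators commute with translations, and translations by $b\to\infty$ converge weakly to zero, so approximate eigenvectors can be translated into singular Weyl sequences. The two are equally standard; the paper's version does not use commutativity at all (so it would survive for convolution operators on non-Abelian non-compact groups, where no scalar Fourier diagonalization exists), whereas yours stays entirely inside the multiplication-operator picture and makes the precise role of non-compactness (non-atomicity of $\widehat\m$) explicit.
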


\begin{proof}
One can take here $X\!:=X_0\sqcup\{\infty\}$\,, the Alexandrov compactification of $X_0$\,, the point at infinity being a fixed point for the action. With notations as above, we work with $H={\rm Conv}(\psi)$ and $H_Y\!={\rm Toep}_{Y_0}(\psi)$ ($\varphi(\cdot)=1$ is a legitimate choice). 

\smallskip
By our results, we know that these two operators have the same essential spectrum. So one only needs to show that the essential spectrum of ${\rm Conv}(\psi)$ coincides with the r.\,h.\,s.\! of \eqref{frotac}, which  is simply the closure of the range $\widehat\psi\big(\widehat X_0\big)$ of the Fourier transform of the function $\psi$\,. The convolution operator ${\rm Conv}(\psi)$ is unitarily equivalent (via the Fourier transformation) with the operator of multiplication by $\widehat\psi$ in $L^2\big(\widehat X_0\big)$\,, so its (full) spectrum is $\overline{\widehat\psi\big(\widehat X_0\big)}$\,. 

\smallskip
To finish the proof, one still has to show that the spectrum of ${\rm Conv}(\psi)$ is purely essential. This follows in a standard way from the fact that the group is not compact, since the convolution operators commute with translations, and translations by $b$ converge weakly to zero when $b\to\infty$\,.
\end{proof}

Corollary \ref{pontryagin} may be interpreted as expressing the essential spectrum of ${\rm Toep}_{Y_0}(\psi)$ as the total spectrum of a 'limit operator' corresponding to the point at infinity, that may be defined via the amenable groupoid $(X\!\rtimes_\th\!X_0)|_{Y_0\sqcup\{\infty\}}$ (see references \cite{AZ,CNQ, Co} for example).

\section{The Decomposition Principle for discrete metric spaces}\label{germanofoni}

We are going to fix a {\it uniformly discrete metric space $(X_0,d)$ with bounded geometry, satisfying Yu's Property A}. To avoid trivialities, we assume it to be infinite. {\it Uniform discreteness} means that, for some $\delta>0$\,, one has $d(x,x')\ge\delta$ if $x\ne x'$\,. {\it Bounded geometry} means that for every positive $r$, all the balls of radio $r$ have less then $N_r<\infty$ elements. We do not make explicit {\it Property A}, referring to \cite[Ch.5]{BOz}; for the particular case of countable discrete groups (with any invariant metric), it is equivalent to exactness of the group. Its role here is to insure amenability of the groupoid introduced below.

\smallskip
 Linear bounded operators $H$ in the Hilbert space $\ell^2(X_0)$ are given by (suitable) "$X_0\!\times\!X_0$-matrices" with complex entries:
\begin{equation}\label{fraier}
[H(u)](a)=\sum_{a'\in X_0}\!H(a,a')u(a')\,.
\end{equation}
Sometimes we identify the operator with the corresponding "matrix".

\begin{defn}\label{frayer}
\begin{enumerate}
\item[(i)]
We say that $H\in\mathbb B\big[\ell^2(X_0)\big]$ is {\it a band operator} if there is some positive $r$ such that $H(a,a')=0$ if $d(a,a')>r$ and if $\sup_{a,a'}|H(a,a')|<\infty$\,.
\item[(ii)]
A linear bounded operator is {\it band dominated} if it is a norm-operator limit of band operators.
\end{enumerate}
\end{defn}

It is known that the band operators form a $^*$-subalgebra $\mathbb C(X,d)$ and the band-dominated operators form a unital $C^*$-subalgebra $C^*(X_0,d)$ of $\mathbb B\big[\ell^2(X_0)\big]$\,. The later one is called {\it the uniform Roe algebra of the metric space}. 

\smallskip
Let now $Y_0$ be a subset of $X_0$ with finite complement. Assuming that \eqref{fraier} is a band-dominated operator in $\ell^2(X_0)$\,, it is clear that
\begin{equation}\label{freier}
\big[H_{Y_0}(v)\big](b)=\sum_{b'\in Y_0}\!H(b,b')v(b')\,,\quad\forall\,b\in Y_0
\end{equation}
defines a band-dominated operator in $\ell^2(Y_0)$\,.

\begin{thm}\label{uvert}
Let $\Si$ be a $C^*$-algebraic spectral set. Then $\Si_{\rm ess}(H)=\Si_{\rm ess}\big(H_{Y_0}\big)$\,.
\end{thm}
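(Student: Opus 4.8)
The plan is to realize this as a special case of Theorem \ref{aguacate}, applied to the coarse groupoid. First I would take $X:=\beta X_0$, the Stone--\v Cech compactification, and let $\Xi:=\Xi(X_0,d)$ be the associated coarse groupoid of Skandalis--Tu--Yu, whose unit space is $\beta X_0$. The points of $X_0$ are isolated in $\beta X_0$, so $X_0$ is open and dense; since the metric is finite-valued, the reduction of $\Xi$ to $X_0$ is the pair groupoid $X_0\times X_0$, giving a single orbit with trivial isotropy. Thus $X_0$ is an open dense main orbit and $X_\infty:=\beta X_0\setminus X_0$ is closed, hence compact, and non-empty because $X_0$ is infinite; the groupoid is \'etale and $\si$-compact. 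Property A is exactly what makes $\Xi$ (topologically) amenable, cf.\ \cite{BOz}, so $\Xi$ is standard in the sense of Definition \ref{hermione} and Setting \ref{orange} applies. The key identifications are $\,{\sf C}^*(\Xi)={\sf C}^*_{\rm r}(\Xi)\cong C^*(X_0,d)$ (the uniform Roe algebra), under which the vector representation $\Pi_0$ of Remark \ref{harry} is the defining representation of band-dominated operators on $\ell^2(X_0)=L^2(X_0,\mu)$ (with $\mu$ the counting measure), and, by Remark \ref{indoua}, the reduction ideal $\,{\sf C}^*[\Xi(X_0)]$ is carried isomorphically onto $\mathbb K[\ell^2(X_0)]$. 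Consequently the essential spectral set \eqref{ofi} relative to the Calkin algebra agrees with $\Si_{\rm ess}^{{\sf C}^*[\Xi(X_0)]}$.

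Next I would verify tameness. Since $X_0\setminus Y_0$ is finite and consists of isolated points of $\beta X_0$, it is clopen, so $Y=\beta X_0\setminus(X_0\setminus Y_0)$ is a clopen subset of $X$ containing $X_\infty$; by Lemma \ref{sasperam} it is tame, and its complement in $X_0$ is finite, hence relatively compact. All hypotheses of Theorem \ref{aguacate} are therefore met, and with $h\in{\sf C}^*(\Xi)$ the element corresponding to $H$ (the algebra being unital, no unitization is needed, cf.\ Example \ref{xample}) we obtain $\Si_{\rm ess}(H)=\Si_{\rm ess}(H_Y)$, where $H_Y=\mathfrak R_{XY}(H)={\sf p}_{XY}\circ H\circ{\sf j}_{XY}$.

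It then remains to recognise $\mathfrak R_{XY}(H)$ as the matrix compression $H_{Y_0}$ of \eqref{freier}. The map ${\sf j}_{XY}$ extends $v\in\ell^2(Y_0)$ by zero to $\ell^2(X_0)$, and ${\sf p}_{XY}$ restricts back to $Y_0$, so in matrix terms $\big(\mathfrak R_{XY}(H)v\big)(b)=\sum_{b'\in Y_0}H(b,b')v(b')$ for $b\in Y_0$, which is exactly \eqref{freier}. Hence $H_Y=H_{Y_0}$ and the theorem follows.

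The main obstacle I expect is not in the abstract machinery, which is already packaged in Theorem \ref{aguacate}, but in the two structural identifications underlying the first step: that $\Xi(X_0,d)$ is a standard groupoid (in particular \'etale, Hausdorff and amenable under Property A) and that its reduced $C^*$-algebra, together with its vector representation, reproduces the uniform Roe algebra acting on $\ell^2(X_0)$ with the compacts as the orbit ideal. These are the Skandalis--Tu--Yu facts relating coarse geometry to groupoids; I would invoke them with appropriate references (see \cite{BOz} and the limit-operator literature \cite{AZ,HS,SWi}) rather than reprove them, taking care that second countability of $\beta X_0$ is not required --- only second countability of the discrete main orbit $X_0$, which holds.
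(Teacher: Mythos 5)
Your proposal is correct and follows essentially the same route as the paper: realize the uniform Roe algebra as the $C^*$-algebra of the Skandalis--Tu--Yu coarse groupoid over $\beta X_0$ (standard and amenable by Property A, with the main-orbit ideal carried onto the compacts and the vector representation as the defining one), check tameness of $Y=Y_0\sqcup X_\infty$, and invoke Theorem \ref{aguacate}. Your tameness argument, via clopen-ness of $Y$ and Lemma \ref{sasperam}, is in fact slightly more explicit than the paper's remark that in the \'etale setting tameness ``is not a problem.''
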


To deduce the present result from Theorem \ref{aguacate}, we need to recall the groupoid model behind the uniform Roe algebras, cf \cite{STY,Roe}.

\smallskip
Let us denote by $\beta X_0\equiv X$ {\it the Stone-\u Cech compactification} of the discrete space $X_0$ and by $X_\infty\!:=\beta X_0\!\setminus\!X_0$ the corresponding boundary (corona set). Then $X_0$ is a dense open subset of $\beta X_0$\,. For every $r\ge 0$ one sets 
$$
\Delta_r\!:=\{(a,a')\in X_0\!\times\! X_0\!\mid d(a,a')\le r\}\,,
$$ 
with closure $\overline{\Delta_r}$ in $\beta(X_0\!\times\!X_0)$\,, and then 
\begin{equation*}\label{abiert}
\Xi(X_0,d)\!:=\bigcup_{r>0}\,\overline{\Delta_r}\,.
\end{equation*}
It is explained in \cite[Sect.10.3]{Roe} that, although $\beta(X_0\!\times\!X_0)$ and $\beta X_0\!\times\!\beta X_0$ are not homeomorphic, the sets $\overline{\Delta_r}$ may be regarded as subsets of $\beta X_0\!\times\!\beta X_0$\,. Finally, $\Xi(X_0,d)$ is an open subset of $\beta X_0\!\times\!\beta X_0$\,, and the pair groupoid structure restricts to a groupoid structure on $\Xi(X_0,d)$\,. One defines the topology on $\Xi(X_0,d)$\,: the subset $U\subset \Xi(X_0,d)$ is declared open if and only if $U\cap\overline{\Delta_r}$ is open in $\overline{\Delta_r}$ for any $r>0$ (this is {\it not} the subspace topology).

\begin{prop}\label{delaroe}
\begin{enumerate}
\item[(i)]
$\,\Xi(X_0,d)$ is an amenable $\si$-compact \'etale standard groupoid over the unit space $\beta X_0$\,, with main orbit $X_0$\,.
\item[(ii)]
Its groupoid $C^*$-algebra ${\sf C}^*\!\big[\Xi(X_0,d)\big]$ is isomorphic to the uniform Roe $C^*$-algebra $C^*\!(X_0,d)$\,. This isomorphism sends the ideal ${\sf C}^*\!(X_0\!\times\!X_0)$\,, corresponding to the main orbit, onto the ideal of all compact operators in $\ell^2(X_0)$\,.
\end{enumerate}
\end{prop}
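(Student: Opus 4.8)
The plan is to check the clauses of Definition \ref{hermione} for (i) and then transport everything to $\ell^2(X_0)$ through the vector representation for (ii), importing the genuinely coarse-geometric inputs from \cite{STY,Roe}. For (i), each $\overline{\Delta_r}$ is closed in the compact space $\beta(X_0\!\times\!X_0)$, hence compact; uniform discreteness lets one restrict to integer scales, so $\Xi(X_0,d)=\bigcup_{n\in\N}\overline{\Delta_n}$ is $\si$-compact. The pair-groupoid operations on $X_0\!\times\!X_0$ extend continuously for the inductive-limit topology, with unit space the closure of the diagonal, canonically $\beta X_0$. The decisive structural point is that the groupoid is \'etale: by bounded geometry each controlled set $\Delta_r$ is a finite union of \emph{partial translations} (subsets on which both coordinate projections are injective), whose closures are compact open bisections, and these bisections form a basis making $\r$ and $\d$ local homeomorphisms.

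Standardness then follows quickly. The orbit of any $a\in X_0$ is all of $X_0$ (transitivity of the pair groupoid there), it is open, dense in $\beta X_0$ and second countable for the induced discrete topology, while its isotropy reduces to $\{(a,a)\}$; since $X_0$ is infinite the corona $X_\infty=\beta X_0\!\setminus\!X_0$ is a non-empty compact invariant set, so $\beta X_0$ is a compactification of the main orbit. Amenability is the deepest ingredient: I would simply invoke the theorem of \cite{STY} that Yu's Property A for $(X_0,d)$ is equivalent to topological amenability of the coarse groupoid $\Xi(X_0,d)$. In particular the full and reduced $C^*$-algebras coincide.

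For (ii) I would first identify $C_{\rm c}[\Xi(X_0,d)]$ with the band operators: a continuous compactly supported function is supported in some $\overline{\Delta_r}$, and its restriction to $X_0\!\times\!X_0$ is precisely a bounded kernel vanishing off $\Delta_r$. Feeding such an $f$ into the vector representation $\Pi_0$ of Remark \ref{harry} --- built here over the counting measure on the discrete orbit, so that $\H(X_0)=\ell^2(X_0)$ --- a direct computation turns the groupoid convolution into the matrix product \eqref{fraier}, with matrix $H(a,a')=f(a,a')$. By Lemma \ref{feisful} the representation $\Pi_0$ is faithful, hence isometric, so it extends to an isomorphism of ${\sf C}^*[\Xi(X_0,d)]$ onto the norm-closure of the band operators, namely the uniform Roe algebra $C^*(X_0,d)$. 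Finally, ${\sf C}^*(X_0\!\times\!X_0)$ is the reduction to the main orbit, which has trivial isotropy, so by Remark \ref{indoua} (that is, \cite[Th.\,3.1]{MRW}) it is the elementary algebra $\mathbb K[\ell^2(X_0)]$, and $\Pi_0$ sends it onto the compact operators. The convolution-equals-matrix computation and the norm bookkeeping are routine; the substantial inputs I would import rather than reprove are the decomposition of controlled sets into partial translations behind the \'etale structure and, above all, the Property A $\Leftrightarrow$ amenability equivalence of \cite{STY}, which is the main obstacle to a self-contained treatment.
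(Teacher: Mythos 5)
Your proposal is correct and follows essentially the same route as the paper, which simply cites \cite[Sect.\,10.3, 10.4]{Roe}, \cite{STY} and \cite{SWi} for the structure of the coarse groupoid, attributes amenability to Property A, and records the isomorphism on compactly supported functions exactly as you do (the paper's Remark \ref{seapropie} likewise interprets the isomorphism as the vector representation). The details you supply---the decomposition into partial translations behind the \'etale property, faithfulness of $\Pi_0$ via Lemma \ref{feisful}, and \cite[Th.\,3.1]{MRW} for identifying the ideal with the compacts---are precisely the content of those citations and of the paper's own Remarks \ref{indoua}, \ref{harry} and \ref{seapropie}.
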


\begin{proof}
This may be found in \cite[Sect.10.3,10.4]{Roe}; see also \cite[Ch.3]{STY} and \cite[App.C]{SWi}. Amenability is due to Property A. Let us only recall the action of the isomorphism $\Gamma:{\sf C}^*\!\big[\Xi(X_0,d)\big]\to C^*\!(X_0,d)$ on continuous compactly supported functions. Such a function $h$ is actually supported on some set $\overline{\Delta_r}$\,, so it may be seen as a continuous function $:\Delta_r\!\to\mathbb C$\,. Then $\Gamma$ transforms it into the operator \eqref{freier} with $H=h$\,. 
\end{proof}

\begin{rem}\label{seapropie}
It is easy to see that if $a\in X_0\subset\beta X_0=\Xi(X_0,d)^{(0)}$, the ${\rm d}$-fiber in $a$ is $\Xi(X_0,d)_a\!=X_0\!\times\!\{a\}$\,. The restriction of the ${\rm r}$-map to this fiber, needed for the vector representation in Remark \ref{harry}, just sends $(a',a)$ into $a'\in X_0$\,. It turns out that the isomorphism $\Gamma$ may be interpreted as the vector representation. This follows from Remark \ref{harry} and from the fact that, in our case, groupoid convolution is the usual composition of kernels (matrices).
\end{rem}

Assuming, as above, that $X_0\!\setminus\!Y_0$ is finite, we set $Y\!:=Y_0\sqcup X_\infty$\,. Since we are in an \'etale setting, the chosen Haar measures are counting measures and tameness is not a problem; we are placed in the Setting \ref{orange}. It is clear that \eqref{freier} is obtained from \eqref{fraier} by applying the procedure indicated before Theorem \ref{aguacate}.  {\it So Theorem \ref{uvert}. follows from Theorem \ref{aguacate}.}

\begin{rem}\label{narco}
Following the suggestions of a referee, to whom I am grateful, I will sketch the proof of a result that improves Theorem \ref{uvert}, based on the fact that, in the present case, $X_0\!\setminus\! Y_0$ is a finite set, with the consequence that $\ell^2(Y_0)$ is a finite-dimensional Hilbert space (this does not happen in other parts of the article). So, more generally, let $\H=\mathcal L\oplus\mathcal L^\perp$ be an orthogonal decomposition with ${\rm dim}\big(\mathcal L^\perp\big)<\infty$\,. Correspondingly ${\rm id}_\H={\sf p}+{\sf p}^\perp$, where we also regard the two orthogonal projections as surjections on their natural ranges, respectively. By ${\sf j}={\sf p}^*\!:\mathcal L\to\H$ we denote the canonical injection.  For $H\in\mathbb B(\H)$ we set 
$$
L\equiv\mathfrak R(H):={\sf p}H{\sf j}\in\mathbb B(\mathcal L)\,.
$$
The operators $H$ and $L$ act in different Hilbert spaces; we cannot say sharply that 'their difference is a compact operator'. The mapping $\mathfrak R:\mathbb B(\H)\to\mathbb B(\mathcal L)$ is not multiplicative, but it  preserves compactness and in addition {\it it becomes a $C^*$-morphism of the Calkin algebras.} Let us set
$$
\mu:\mathbb B(\mathcal H)\to(\mathbb B/\mathbb K)(\mathcal H)\,,\quad\nu:\mathbb B(\mathcal L)\to(\mathbb B/\mathbb K)(\mathcal L)
$$
for the quotient maps. Defining (correctly) 
$$
\mathscr R:(\mathbb B/\mathbb K)(\mathcal H)\to(\mathbb B/\mathbb K)(\mathcal L)\quad{\rm by}\quad\mathscr R\circ\mu=\nu\circ\mathfrak R
$$ 
(an obvious diagram commutes), we see that it is actually a $C^*$-morphism. Multiplicativity follows easily  from the definitions, from the fact that ${\sf p}^\perp$ is finite rank and from
\begin{equation}\label{acid}
{\sf p}ST{\sf j}-{\sf p}S{\sf j}{\sf p}T{\sf j}={\sf p}S{\sf p}^\perp T{\sf j}\in\mathbb K(\mathcal H_Y)\,,\quad\forall\,S,T\in\mathbb B(\mathcal H)\,.
\end{equation}
In addition $\mathscr R$ is injective, since ${\sf p}H{\sf j}$ is compact in $\mathcal L$ if and only if $H$ is compact in $\H$\,.
{\it Now we show that $\Sigma_{\rm ess}(H)=\Sigma_{\rm ess}(L)$}\,, which will cover Theorem \ref{uvert} after some obvious particularizations. We know that $\Sigma_{\rm ess}$ is obtained from $\Sigma$ applied to the image in a quotient (a Calkin algebra in this case) and that $\Sigma$ is preserved under $C^*$-monomorphisms, which is used in the second of the following equalities. One writes
\begin{equation}\label{heroin}
\begin{aligned}
\Sigma_{\rm ess}(H)&=\Sigma\big(\mu(H)\,\big\vert\,(\mathbb B/\mathbb K)(\mathcal H)\big)=\Sigma\big(\mathscr R[\mu(H)]\,\big\vert\,(\mathbb B/\mathbb K)(\mathcal L)\big)\\
&=\Sigma\big(\nu[\mathfrak R(H)]\,\big\vert\,(\mathbb B/\mathbb K)(\mathcal L)\big)=\Sigma\big(\nu(L)\,\big\vert\,(\mathbb B/\mathbb K)(\mathcal L)\big)=\Sigma_{\rm ess}(L)
\end{aligned}
\end{equation}
and we are done.
\end{rem}

\begin{rem}\label{cocaine}
Such an argument is not possible if $\mathcal L^\perp$ has infinite dimension, as in most cases in this paper or in other situations. Then one really has to reduce the investigation to elements of suitable $C^*$-algebras $\mathfrak B$ of $\mathbb B(\H)$ and to suitable 'restriction maps' $\mathfrak R:\mathfrak B\to\mathfrak C\subset\mathbb B(\mathcal L)$\,. Under favorable circumstances, the induced map $\mathscr R:\mathfrak B/\mathbb K(\H)\to\mathfrak C/\mathbb K(\mathcal L)$ would become multiplicative and the proof survives. The groupoid techniques used in the second half of this article put into evidence such a case.
\end{rem}

\begin{rem}\label{georgescu}
In a very interesting article \cite{Ge}, V. Georgescu treats band dominated operators associated to non-discrete metric spaces. In search of a Decomposition Principle, this would not lead to finite dimensional corestrictions and would be technically more demanding. Finding a generalization of the Roe groupoid for this 'continuous case' would have both an intrinsic interest and usefulness for spectral applications.
\end{rem}

\subsection*{Acknowledgment} The author has been supported by the Fondecyt Project 1160359. He is grateful to Professors Victor Nistor and Jiawen Zhang for several useful discussions and to Professor Serge Richard for a critical reading of the manuscript. He is also grateful to anonymous referees for many useful remarks, which contributed to an improvement of the presentation.


\bigskip
\bigskip
DEPARTAMENTO DE MATEM\'ATICAS, 

\smallskip
FACULTAD DE CIENCIAS,

\smallskip
UNIVERSIDAD DE CHILE, 

\smallskip
SANTIAGO, CHILE.

\medskip
{\it E-mail address:} mantoiu@uchile.cl

 \end{document}